\newcommand{\bA}{\mathbb{A}}
\newcommand{\bC}{\mathbb{C}}
\newcommand{\bV}{\mathbb{V}}
\newcommand{\caL}{\mathcal{L}}
\newcommand{\cO}{\mathcal{O}} 
\newcommand{\cV}{\mathcal{V}}
\newcommand{\cX}{\mathcal{X}}
\newcommand*{\sheafhom}{\mathscr{H}\kern -.5pt om}
\newcommand{\funhom}{\underline{\textnormal{Hom}}}
\newcommand{\funisom}{\underline{\textnormal{Isom}}}
\DeclareMathOperator{\Spec}{Spec}
\DeclareMathOperator{\Sym}{Sym}
\DeclareMathOperator{\Aut}{Aut}
\DeclareMathOperator{\Fr}{Fr}
\theoremstyle{definition}
\newtheorem{definition}{Definition}
\numberwithin{definition}{section}
\newtheorem{example}[definition]{Example}
\newtheorem{situation}[definition]{Situation}
\newtheorem{remark}[definition]{Remark}
\theoremstyle{theorem}
\newtheorem{theorem}[definition]{Theorem}
\newtheorem{lemma}[definition]{Lemma}
\title[Embedding Deligne--Mumford stacks into GIT quotient stacks]{Embedding Deligne--Mumford stacks into GIT quotient stacks of linear representations}
\author{Mitchell Faulk}
\address{Mitchell Faulk, Department of Mathematics, 1326 Stevenson Center,  Vanderbilt University, Nashville, TN 37240}
\email{mitchell.m.faulk@vanderbilt.edu}
\author{Chiu-Chu Melissa Liu}
\address{Chiu-Chu Melissa Liu, Department of Mathematics, Columbia University, 2990 Broadway, New York, NY 10027, USA}
\email{ccliu@math.columbia.edu}
\date{}
\begin{document}

\begin{abstract}
We study how to use a suitably ample locally free sheaf of rank $r$ over a proper Deligne--Mumford  stack to furnish an embedding of the stack into a geometric invariant theory  (GIT) quotient stack  $[\bA^\text{ss}/GL_r]$ where $\bA$ is a finite dimensional representation of the general linear group $GL_r$. 


\end{abstract}

\maketitle

\setcounter{tocdepth}{1} 
\tableofcontents

\section{Introduction}

Given a polarized compact complex manifold $(X,L)$, a classical result, which is often called the Kodaira embedding theorem, asserts that for all sufficiently large $m$, the natural evaluation map for sections of $L^m$ defines a closed embedding from $X$ into the projectivization of the dual of the space of global sections.
More generally, if a locally free sheaf $V$ of rank $r$ over $X$ is also specified, then for all sufficiently large $m$, the evaluation map for sections of $V\otimes L^m$ defines a closed embedding from $X$ into the Grassmann variety of $r$-planes in the dual of the space of global sections. 

An extension of the rank one result has been recently obtained for orbifold singularities which are \emph{cyclic}.  Precisely, let $\cX$ denote a compact complex orbifold with cyclic stabilizers.  Let $N$ be the minimal positive integer such that the order of every element in every stabilizer group divides $N$. (The number $N$ is called the order of $\cX$ in \cite{ross2011weighted} and the index of $\cX$ in \cite{abramovich2011stable}.)  Ross and Thomas \cite{ross2011weighted}
define an orbifold polarization on $\cX$ to be an invertible sheaf $\caL$ on $\cX$ such that stabilizer groups acts faithfully on fibers of $\caL$ and that $\caL^N$ descends to an ample line bundle on underlying space $X$. These conditions can also be stated in algebro-geometric language. In fact, the orbifold  $\cX$ is a proper Deligne--Mumford stack
defined over the field $\bC$ of complex numbers, with cyclic geometric stabilizer groups. Let $\pi:\cX\to X$ be the morphism to the coarse moduli space. Then an orbifold polarization $\caL$ consists of a $\pi$-ample invertible sheaf such that $\caL^N =\pi^*M$ where $M$ is an ample invertible sheaf on $X$. Ross and Thomas use global sections of 
$\caL^{m+j}$, where $m\gg 0$ and $0\leq j\leq N$, to construct a closed embedding of the stack $\cX$ into a weighted projective stack. In algebro-geometric language,  
Abramovich and Hassett \cite{abramovich2011stable} prove a similar result for a proper cyclotoic stack $f: \cX \to B$ over a general base scheme $B$ of finite type, equipped with a $\pi$-ample invertible sheaf $\caL$ such that $\caL^N = \pi^*M$ where $M$ is an invertible sheaf on the coarse moduli $\bar{f}: X \to B$ which  is $\bar{f}$-ample. The result described by Ross and Thomas corresponds to the case $B=\Spec \bC$ and $f: \cX \to \Spec \bC$ is smooth. 

The purpose of this note is to describe a natural generalization of the result obtained by Ross and Thomas to the situation where the geometric stabilizers are not necessarily cyclic. We consider a possibly singular Deligne-Mumford stack $\cX$  over a field $k$ which is algebraically closed and of characteristic zero. Let $\pi:\cX\to X$ be the morphism to the coarse moduli. We assume that 
\begin{enumerate}
\item[(i)] $p:\cX \to \Spec k$ is proper
\item[(ii)] $\bar{p}: X\to \Spec k$ is projective, and 
\item[(iii)] $\cX$ possesses a $\pi$-ample
locally free sheaf $\cV$.
\end{enumerate} 
By assumption (iii), all of the geometric stabilizer groups act faithfully on the fibers of $\cV$. At the same time, our assumptions guarantee that there exists a $\pi$-ample locally free sheaf $\cV$  such that   $(\det\cV)^N = \pi^*M$ for some ample invertible sheaf $M$ on $X$, where $N$ is the minimal positive
integer such that the order of each geometric stabilizer group divides $N$.  We use global sections of $(\det \cV)^{mN}\otimes \Sym^i\cV$, for $m\gg 0$ and $j\leq N$, to obtain the following main result. 

\begin{theorem}\label{thm:mainintro}
Under assumptions (i) through (iii), there is an embedding of $\mathcal{X}$ into a 
geometric invariant theory (GIT) quotient  stack 
\begin{align}\label{eqn:A^ss/GL_r}
[\mathbb{A}^{\textnormal{ss}}/GL_r]
\end{align}
where $r$ is the rank of the locally free sheaf $\cV$, $\bA$ is a linear representation of the general linear group $GL_r$, and 
$\mathbb{A}^{\textnormal{ss}}$ is the semistable locus determined by the determinant character. 
\end{theorem}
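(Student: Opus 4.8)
The plan is to realize $\cX$ as a quotient stack via the frame bundle of $\cV$ and then build, out of the given global sections, a $GL_r$-equivariant morphism into a linear representation. Concretely, I would let $P=\funisom(\cO_\cX^{\oplus r},\cV)$ be the frame bundle, a $GL_r$-torsor over $\cX$, so that $\cX\cong[P/GL_r]$. Because every geometric stabilizer acts faithfully on the fibers of $\cV$, the $GL_r$-action on $P$ is free, whence $P$ is an algebraic space; and for $x\in\cX$ with automorphism group $G_x$, the faithful representation $\rho_x\colon G_x\hookrightarrow GL_r$ on $\cV_x$ realizes the residual gerbe as $[GL_r/\rho_x(G_x)]$ inside $P$. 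Under this presentation the associated-bundle dictionary sends a $GL_r$-representation to a sheaf on $\cX$: the standard representation $k^r$ recovers $\cV$, the character $\det$ recovers $\det\cV$, and $\Sym^j(k^r)$ recovers $\Sym^j\cV$. I would then set $W_{m,j}=H^0(\cX,(\det\cV)^{mN}\otimes\Sym^j\cV)$ and assemble the evaluation maps into one $GL_r$-equivariant morphism $\Phi\colon P\to\bA$, where $\bA=\bigoplus_{0\le j\le N}\Hom\bigl(W_{m,j},\det^{mN}\otimes\Sym^j(k^r)\bigr)$, descending to $\phi\colon\cX\to[\bA/GL_r]$.

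The feature to exploit is that sections on $\cX$ are $G_x$-invariant, so at a frame $p$ over $x$ the evaluation $A_j\colon W_{m,j}\to(\det\cV_x)^{mN}\otimes\Sym^j\cV_x$ lands in the invariant subspace $I_j=(\det^{mN}\otimes\Sym^j(k^r))^{\rho_x(G_x)}$. I would first establish the positivity input: for $m\gg0$ and every $0\le j\le N$, the evaluation $A_j$ surjects onto $I_j$, and the whole collection separates geometric points of $X$ and tangent vectors. This combines relative positivity of $\cV$ over $\pi$ (to handle the $\pi$-fiber and $\Sym^j$ directions) with Serre's theorem applied to the ample $M$ on the projective coarse space $X$, using $(\det\cV)^N=\pi^*M$. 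I expect this to be the routine, if technical, part.

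Next I would check that $\Phi$ factors through the semistable locus for the determinant character. Since $(\det\cV)^{mN}$ descends to $M^m$, which is base-point free on $X$, the $j=0$ component $A_0\colon W_{m,0}\to\det^{mN}$ is a nonzero functional at every frame, and more systematically the $r\times r$ minors of the evaluation data are $\det$-semiinvariants that do not simultaneously vanish on $\Phi(P)$; by the Hilbert--Mumford/King numerical criterion this gives $\Phi(P)\subseteq\bA^{\mathrm{ss}}$, so $\phi$ lands in $[\bA^{\mathrm{ss}}/GL_r]$.

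Finally, and this is where running $j$ up to $N$ is essential, I would show $\phi$ is a closed immersion. Separation of points of $\cX$ follows from $M^m$ being very ample on $X$ together with $\pi$ being a bijection on geometric points, and separation of tangents from the positivity input; the delicate point is that $\phi$ must be a \emph{monomorphism}, hence induce an isomorphism on automorphism groups, which forces $\mathrm{Stab}_{GL_r}(\Phi(p))=\rho_x(G_x)$ exactly. The inclusion $\rho_x(G_x)\subseteq\mathrm{Stab}_{GL_r}(\Phi(p))$ is automatic, since $\rho_x(G_x)$ acts trivially on each $I_j=\mathrm{im}(A_j)$; the reverse inclusion is the heart of the matter and is an invariant-theoretic statement: a finite subgroup $H\subseteq GL_r$ with $|H|$ dividing $N$ is recovered as the subgroup of $GL_r$ fixing, pointwise, all of its invariants in $\bigoplus_{0\le j\le N}\det^{mN}\otimes\Sym^j(k^r)$. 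I would deduce this from a Noether-type bound, namely that the invariants of $H$ are generated in degrees $\le|H|\le N$, so that degree $\le N$ already separates $H$-orbits and pins down $H$, the $\det^{mN}$-twist serving only to reconcile the $GL_r$-linearization with the $SL_r$-level invariants. Verifying that this chosen range of symmetric powers collapses the stabilizer exactly to $\rho_x(G_x)$, uniformly in $x$, is the main obstacle. Granting it, $\phi$ is a representable monomorphism from a proper stack whose image lies in the finite-stabilizer (stable) locus, where $[\bA^{\mathrm{ss}}/GL_r]$ is separated; hence $\phi$ is a proper monomorphism, and therefore a closed immersion of $\cX$ into $[\bA^{\mathrm{ss}}/GL_r]$.
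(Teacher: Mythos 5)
Your construction is the same as the paper's: you pass to the frame bundle so that $\cX\simeq[\Fr(\cV)/GL_r]$, you use exactly the sections $H^0(\cX,(\det\cV)^{mN}\otimes\Sym^j\cV)$ for $0\le j\le N$ to build the $GL_r$-equivariant evaluation map into $\bA=\bigoplus_{j=0}^N\Hom_k\bigl(V_j(m),\Sym^j(k^r)\bigr)$ with the linearization $\Sym^j\rho\otimes(\det\rho)^{mN}$, and your semistability argument via the $j=0$ component is the paper's (the paper phrases it as: a nonvanishing coordinate of the $(\det\rho)^{mN}$-twisted piece has affine nonvanishing locus, which is Alper's semistability; your Hilbert--Mumford phrasing is equivalent, and the ``$r\times r$ minors'' are not needed). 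Your key invariant-theoretic step is also sound and is the same mechanism as the paper's fiberwise lemma: by Noether's bound the invariants of $H=\rho_x(G_x)$ are generated in degrees $\le|H|\le N$, so an element $g\in GL_r$ fixing the degree-$\le N$ invariants pointwise fixes all invariants, hence preserves every $H$-orbit in the (irreducible) affine space, hence agrees with some $h\in H$ everywhere; together with the $j=0$ condition $\det(g)^{mN}=1$ this pins the stabilizer down to $H$ exactly.

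The genuine gap is in your final properness step. You deduce that $\phi\colon\cX\to[\bA^{\mathrm{ss}}/GL_r]$ is proper from the claim that its image lies in the \emph{stable} locus, ``where the stack is separated,'' but what you actually establish is only that image points have \emph{finite} stabilizers. In twisted affine GIT, semistable with finite stabilizer does not imply stable: stability also requires the $GL_r$-orbit of $\Phi(p)$ to be closed in $\bA^{\mathrm{ss}}$, and nothing in your argument shows this. Since $[\bA^{\mathrm{ss}}/GL_r]$ is not separated, properness of $\phi$ also does not follow by cancellation from properness of $\cX$; so as written the ``proper monomorphism $\Rightarrow$ closed immersion'' conclusion is unsupported. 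The paper avoids this issue entirely by working upstairs: it proves that the $GL_r$-equivariant morphism of \emph{schemes} $f_m\colon Y=\Fr(\cV)\to\bA(r,\mu(m))$ is an immersion --- a locally closed immersion on each fiber $\mathrm{Isom}_k(\cV_x,k^r)/G_x$ via the same Noether-bound computation you invoke, separation of distinct points via Serre-type vanishing $H^1(\cX,\mathcal{J}_{x,x'}\otimes(\det\cV)^{mN}\otimes\Sym^i\cV)=0$, surjectivity on stalks via vanishing with $\mathcal{J}_x^2$ and linear reductivity of $G_x$, and a uniform $m$ by properness of $\cX$ --- and then descends along the Cartesian square of $GL_r$-torsors, where no separatedness of the quotient stack and no closedness of orbits is ever needed. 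Correspondingly, the theorem claims only an embedding (a locally closed immersion), not a closed one. To salvage your stack-level route you would have to prove closedness of the orbits $GL_r\cdot\Phi(p)$ in $\bA^{\mathrm{ss}}$ (e.g.\ by a Kempf/Hilbert--Mumford degeneration analysis), or else retreat to the paper's equivariant argument on the frame bundle; your remaining steps (positivity, separation of points and tangents) match the paper's cohomological lemmas and are, as you say, routine.
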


The assumptions we impose are natural and arguably offer another answer to a problem posed by Kresch in \cite{kresch2009geometry}, namely, to understand when a Deligne--Mumford stack should be called ``(quasi-)projective.'' There, he offers the following equivalences.   

 \begin{theorem}[Kresch]
 Let $\cX$ be a proper Deligne--Mumford stack over a field $k$ of characteristic zero.  Then the following are equivalent:
 \begin{enumerate}
 \item $\cX$ has a projective coarse moduli space and possesses a $\pi$-ample locally free sheaf, where $\pi:\cX \to X$ is
 the morphism to the coarse moduli. 
\item $\cX$ has a projective coarse moduli space and possesses a generating sheaf.
\item $\cX$ admits a closed embedding into a smooth proper Deligne--Mumford stack with projective coarse moduli space. 
 \end{enumerate}
 \end{theorem}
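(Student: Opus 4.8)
The plan is to fix the common hypothesis that $\cX$ has projective coarse moduli space and then to establish the two equivalences $(1)\Leftrightarrow(2)$ and $(1)\Leftrightarrow(3)$, from which the full set of equivalences follows. The conceptual point is that a $\pi$-ample locally free sheaf, a generating sheaf in the sense of Olsson--Starr, and a closed embedding into a smooth proper Deligne--Mumford stack are three manifestations of the same ``enough vector bundles to see the stacky structure'' phenomenon, and the bridge between the intrinsic data in $(1)$, $(2)$ and the extrinsic data in $(3)$ is supplied by Theorem~\ref{thm:mainintro}.

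For $(1)\Rightarrow(2)$ I would argue representation-theoretically. As noted above, $\pi$-ampleness of $\cV$ forces each geometric stabilizer $G_x$ to act faithfully on the fiber $\cV_x$. Over a field of characteristic zero the Burnside--Brauer theorem then guarantees that every irreducible $G_x$-representation occurs inside $\bigoplus_{i=0}^{N-1}\Sym^i\cV_x$ (or inside a suitable sum of tensor powers of $\cV_x$), where $N$ bounds the orders of the stabilizers; properness of $\cX$ ensures that only finitely many stabilizer types occur, so $N$ may be chosen uniformly. By the pointwise criterion of Olsson--Starr, a locally free sheaf whose fibers contain every irreducible representation of the corresponding stabilizer is a generating sheaf, so $\bigoplus_{i=0}^{N-1}\Sym^i\cV$ is generating and $(2)$ holds. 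For the converse $(2)\Rightarrow(1)$ one observes that a generating sheaf is in particular fiberwise faithful, and twisting it by the pullback of a sufficiently positive power of the ample sheaf on the projective coarse space produces a $\pi$-ample locally free sheaf; this direction is formal given projectivity of the coarse moduli.

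For $(1)\Rightarrow(3)$ I would invoke Theorem~\ref{thm:mainintro} directly: the $\pi$-ample sheaf $\cV$ yields an embedding $\cX\hookrightarrow[\bA^{\textnormal{ss}}/GL_r]$, and it remains to check that the target is a smooth proper Deligne--Mumford stack with projective coarse moduli and that the embedding is closed. Smoothness is immediate, since $\bA$ is a linear representation, hence a smooth affine space, $\bA^{\textnormal{ss}}$ is open in it, and the quotient of a smooth scheme by the smooth group $GL_r$ is a smooth stack. The Deligne--Mumford property holds because semistable points for the determinant character have finite stabilizers; the coarse space $\bA^{\textnormal{ss}}\sslash GL_r$, being the $\mathrm{Proj}$ of the ring of semi-invariants over the affine quotient $\bA\sslash GL_r$, is projective once one checks that the latter reduces to a point, whence the stack is proper. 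Finally, the image of the proper stack $\cX$ is closed in this separated target, so the embedding is a closed immersion and $(3)$ holds.

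The last implication $(3)\Rightarrow(1)$ proceeds by restriction, and this is where I expect the main obstacle to lie. Given a closed embedding $\cX\hookrightarrow\cY$ with $\cY$ a smooth proper Deligne--Mumford stack with projective coarse moduli, one must first know that $\cY$ itself carries a $\pi$-ample (equivalently generating) locally free sheaf; this is the substantive input, relying on the theorem that such a stack is a global quotient $[U/GL_n]$ (the resolution property, following Totaro, Gross, and Edidin--Hassett--Kresch--Vistoli), whose tautological bundle furnishes a generating sheaf. Restricting this sheaf along the closed embedding gives a locally free sheaf on $\cX$, and since the coarse space of $\cX$ is a closed subscheme of the projective coarse space of $\cY$ (hence projective) and relative ampleness is preserved under pullback along a closed immersion over compatible coarse spaces, the restriction is again $\pi$-ample. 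The genuinely delicate points are that characteristic zero is used essentially---both in the Burnside--Brauer step and, in any direct proof of $(2)\Rightarrow(3)$ via equivariant resolution of singularities applied to a quotient presentation, to smoothen the ambient stack---and that the resolution property for smooth proper Deligne--Mumford stacks is the true engine behind the equivalence.
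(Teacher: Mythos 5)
First, a structural remark: the paper states this theorem without proof, attributing it to Kresch \cite{kresch2009geometry}, so there is no internal argument to compare against; your proposal must therefore stand on its own, and it contains a genuine gap in the step $(1)\Rightarrow(3)$. The fatal claim is that ``semistable points for the determinant character have finite stabilizers,'' which you use to conclude that $[\bA^{\textnormal{ss}}/GL_r]$ is a smooth \emph{proper Deligne--Mumford} stack. Semistability does not bound stabilizers: a point of $\bA(r,\mu(m))$ whose only nonzero component lies in the $i=0$ summand $\Hom_k(V_0(m),\Sym^0(k^r))$, on which $GL_r$ acts through $(\det)^{mN}$, is semistable by exactly the argument in the proof of Theorem \ref{theorem:embed} (some coordinate $z_j$ is nonzero and $\bA(r,\mu(m))_{z_j}$ is affine and cohomologically affine after quotienting), yet its stabilizer contains $SL_r$, which is positive-dimensional for $r\geqslant 2$. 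Hence the target of Theorem \ref{thm:mainintro} is in general a smooth \emph{Artin} stack that is neither Deligne--Mumford, nor separated, nor proper; it carries a good moduli space (Lemma \ref{lem:alperquotient}), not a coarse one. Your closedness argument (``image of a proper stack in a separated target is closed'') therefore collapses as well---and indeed Theorem \ref{thm:mainintro} asserts only an embedding, not a closed embedding. Passing to the stable locus would restore the DM and separatedness properties but destroys properness, so the route through the GIT quotient stack cannot produce the ambient stack required by $(3)$. The argument that does work is the one you mention only parenthetically: use the generating sheaf to present $\cX\simeq[\Fr(\cV)/GL_r]$ with $\Fr(\cV)=Y$ a quasi-projective scheme (as in the paper's frame-bundle section), then equivariantly compactify and apply characteristic-zero equivariant resolution of singularities, with an additional step to keep the resulting quotient stack Deligne--Mumford and proper; this is Kresch's actual construction.

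The remaining implications in your proposal are essentially sound, with minor corrections. For $(1)\Rightarrow(2)$ your Burnside--Steinberg argument is precisely the paper's Lemma \ref{lem:generating}. For $(2)\Rightarrow(1)$ the twisting by an ample power is unnecessary: with the paper's (Nironi's) definition, $\pi$-ampleness is exactly fiberwise faithfulness, and a representation containing every irreducible representation of $G_x$ is automatically faithful, so a generating sheaf is already $\pi$-ample. For $(3)\Rightarrow(1)$ your outline is correct---the resolution property of the smooth ambient stack (via the quotient-stack theorem of Edidin--Hassett--Kresch--Vistoli, with generic tameness automatic in characteristic zero) supplies a generating sheaf, and its restriction along the closed immersion is generating because a closed immersion is a monomorphism of stacks, so stabilizer groups are preserved---but note that the induced map of coarse spaces need not be a closed immersion as you assert; it is, however, finite and injective, which suffices to conclude projectivity of the coarse space of $\cX$.
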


In Section \ref{sec:preliminaries}, we recall useful definitions and results concerning GIT and stacks, including various notions of moduli spaces and stability. 

We identify in Section \ref{sec:TwistedGrassmannians} a certain class of stacks of the form \eqref{eqn:A^ss/GL_r} which we call \emph{twisted Grassmannians} because they generalize both the construction of ordinary Grassmann varieties and the construction of weighted projective spaces.  

Our main result is stated and proved in Sections \ref{sec:embeddingstatement} and \ref{sec:embeddingproof}. As mentioned earlier, the result extends known results of a similar flavor. Indeed  when the geometric stabilizers are cyclic and rank$\, \cV=1$, we obtain an embedding of $\mathcal{X}$ into a weighted projective stack (with coarse moduli that is a weighted projective space). At the same time, 
when $\cX=X$ is a projective scheme and the rank of $\mathcal{V}$ is $r$,  we obtain an embedding of $\cX=X$ into a Grassmann variety of $r$-planes in a suitable vector space, which reduces to a projective space when $r = 1$.

In future work, we plan to generalize our construction to a Deligne--Mumford stack $f: \cX\to B$ over a base scheme $B$ of finite type, assuming  (i) $f:\cX\to B$ is flat and proper, (ii) the coarse moduli $\bar{f}:\cX \to B$ is flat and projective, (iii) $\cX$ possesses a $\pi$-ample locally free sheaf
$\cV$, where $\pi:\cX\to X$ is the morphism to the coarse moduli, such that $(\det \cV)^N = \pi^*M$ for some $\bar{f}$-ample invertible sheaf
on $X$. We seek to find additional hypotheses that we may impose in order to obtain an embedding of $\cX$ into certain twisted Grassmann \emph{bundle} defined over $B$.

\subsection*{Acknowledgments}  We thank Johan de Jong for helpful conversations and Daniel Halpern-Leistner for helpful communications. 
The second author  is partially supported by NSF DMS-1564497.

\section{Preliminaries}\label{sec:preliminaries}

Throughout, we will work over a fixed field $k$ of characteristic zero that we assume to be algebraically closed. This means in particular that any scheme that appears will be understood to be a scheme over $k$ and that the fiber product of schemes will be over $k$, unless otherwise indicated. In addition, for a positive integer $n$, the notation $\mathbb{A}^n$ means affine $n$-space over $k$ and similarly for the general linear group $GL_n$.

\begin{remark}
There are two primary reasons that we assume $k$ has characteristic zero and is algebraically closed. 
\begin{enumerate}
\item While $GL_n$ is linearly reductive in characteristic zero, it is not linearly reductive in positive or mixed characteristics. On a closely related note, if $GL_n$ acts on an affine scheme $\Spec(R)$, then the natural morphism 
\[
[\Spec(R)/GL_n] \longrightarrow \Spec(R^{GL_n})
\]
will only be an adequate moduli space, but not a good moduli space\footnote{The notion of a good moduli space was introduced by Alper \cite{alper2013good}. The precise definition will be given in Definition \ref{def:goodmoduli}.} in general. See \cite[Remark 2.5]{alper2018existence}. 

\item For a proper scheme $X$ over an algebraically closed field $k$, a classical result says that an invertible sheaf is very ample if and only if it separates points and tangent vectors (see, e.g.  \cite[Proposition II.7.3]{hartshorne2013algebraic}). The proof exploits the Nullstellensatz to assert that all residue fields must be equal to $k$. Our intention in this note is to obtain a generalization of this result, though there may even be suitable extensions relative to fields which are not algebraically closed or even relative to more general Noetherian schemes $B$.      
\end{enumerate}
Nevertheless, we conjecture it is possible to obtain results similar to those in this note whilst working relative to any field or, even, relative to an arbitrary base scheme $B$, provided some additional hypotheses are imposed. 
\end{remark}

\subsection{Linearizations}

\begin{example}\label{eg:action}
Any morphism $G \to GL_n$ of group schemes (over $k$) determines an action of $G$ on $\mathbb{A}^n$ through the fundamental (left) action of $GL_n$ on $\mathbb{A}^n$. More generally, for a scheme $X$, if $\mathbb{A}^n_X$ denotes the base change to $X$, then any morphism $\rho : G \to GL_n$ of group schemes determines an action of $G$ on $\mathbb{A}^n_X$ (over $X$) through base change:
\[
\Sigma_\rho : G \times \mathbb{A}_X^n \longrightarrow \mathbb{A}^n_X. 
\]
\end{example}

\begin{definition}[{\cite[II, Definition 1.7.8]{EGA}}]\label{def:vectorbundle}
Let $X$ be a scheme. Let $E$ be a locally free sheaf over $X$. The \emph{vector bundle associated to $E$} is the scheme 
\[
\mathbb{V}(E) = \Spec_X(\Sym(E))
\]
together with the affine morphism $\mathbb{V}(E) \to X$. It is important to note that the functor $\mathbb{V}$ is a \emph{contravariant} functor from the category of locally free sheaves to the category of vector bundles over $X$. (This is because the functor $\Sym$ is covariant and the functor $\Spec_X$ is contravariant.)

As an alternative characterization, consider the functor which associates to any scheme $f : T \to X$ the set of morphisms of $\mathcal{O}_T$-modules 
\[
\sheafhom(f^*E, \mathcal{O}_T).
\]
This functor is representable by the scheme $\mathbb{V}(E)$. 
\end{definition}

\begin{example}
Let $X$ be a scheme. For a positive integer $n$, 
\[
\mathbb{V}(\mathcal{O}_X^n) = \Spec_X(\Sym(\mathcal{O}_X^n)) \simeq \mathbb{A}_X^n
\]
is affine $n$-space over $X$. Indeed for any scheme $f : T \to X$, the $T$-points are 
\[
\mathbb{V}(\mathcal{O}_X^n)(T) = \sheafhom(f^*(\mathcal{O}_X^n), \mathcal{O}_T) \simeq \sheafhom(\mathcal{O}_T^n, \mathcal{O}_T) \simeq \Gamma(T, \mathcal{O}_T)^n = \mathbb{A}_X^n(T). 
\]
\end{example}

\begin{definition}[{\cite[Definition 1.6]{mumford1994geometric}}]
Let $X$ be a scheme. Let $G$ be a group scheme. Let $\sigma : G \times X \to X$ be an action of $G$ on $X$. Let $E$ be a locally free sheaf over $X$. By a \emph{$G$-linearization} is meant an isomorphism of sheaves 
\[
\phi : \sigma^*E \xrightarrow{\;\;\sim \;\;} p_2^*E
\] 
over $G \times X$ satisfying the cocycle relation 
\[
p_{23}^* \phi \circ (1_G \times \sigma)^* \phi = (\mu \times 1_X)^*\phi
\]
over $G \times G \times X$, where $\mu : G \times G \to G$ denotes the group law. 
\end{definition}

\begin{lemma}[{\cite[Section 1.3]{mumford1994geometric}}]
Let $X$ be a scheme. Let $G$ be a group scheme. Let $\sigma : G \times X \to X$ be an action of $G$ on $X$. Let $E$ be a locally free sheaf over $X$. The data of a $G$-linearization of $E$ is equivalent to the data of an action 
\[
\Sigma : G \times \mathbb{V}(E)  \longrightarrow \mathbb{V}(E)
\]
of $G$ on $\mathbb{V}(E)$ such that 
\[
\xymatrix{
	G \times \mathbb{V}(E) \ar[r]^{\;\;\;\;\;\;\Sigma} \ar[d] & \mathbb{V}(E) \ar[d] \\
	G \times X \ar[r]_{\;\;\;\;\sigma} & X
}
\]
commutes and such that $\Sigma$ is a bundle isomorphism of the vector bundles $G \times \mathbb{V}(E)$ over $G \times X$ and $\mathbb{V}(E)$ over $X$. 
\end{lemma}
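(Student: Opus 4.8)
The plan is to exploit the contravariant functor $\mathbb{V}$ together with its compatibility with base change in order to translate the sheaf-theoretic datum $\phi$ into the geometric datum $\Sigma$, and then to match the cocycle relation against the axioms of a group action. First I would record the base-change formula for $\mathbb{V}$: for any morphism $g : Y \to X$ and any locally free sheaf $E$ on $X$, the identity $\Sym(g^*E) = g^*\Sym(E)$ of quasi-coherent sheaves of algebras, combined with the compatibility of relative $\Spec$ with base change, furnishes a canonical isomorphism $\mathbb{V}(g^*E) \cong \mathbb{V}(E)\times_X Y$ of schemes over $Y$. Applying this to the projection $p_2 : G\times X \to X$ and to the action $\sigma : G \times X \to X$ yields isomorphisms
\[
\mathbb{V}(p_2^*E) \cong G\times \mathbb{V}(E), \qquad \mathbb{V}(\sigma^*E) \cong \mathbb{V}(E)\times_{X,\sigma}(G\times X),
\]
both regarded as vector bundles over $G\times X$.

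Next I would set up the dictionary between the two single-morphism data. Since $\mathbb{V}$ restricts to a contravariant equivalence between locally free sheaves on $G\times X$ and vector bundles (with fiberwise-linear morphisms) over $G\times X$, an isomorphism of sheaves $\phi : \sigma^*E \xrightarrow{\sim} p_2^*E$ is the same datum as an isomorphism of vector bundles
\[
\mathbb{V}(\phi) : G\times\mathbb{V}(E) = \mathbb{V}(p_2^*E) \xrightarrow{\ \sim\ } \mathbb{V}(\sigma^*E) = \mathbb{V}(E)\times_{X,\sigma}(G\times X)
\]
over $G\times X$. Composing $\mathbb{V}(\phi)$ with the first projection $\mathbb{V}(E)\times_{X,\sigma}(G\times X)\to\mathbb{V}(E)$ produces a morphism $\Sigma : G\times\mathbb{V}(E)\to\mathbb{V}(E)$; unwinding the universal property of the fiber product shows that $\Sigma$ covers $\sigma$, and $\Sigma$ is fiberwise linear precisely because $\mathbb{V}(\phi)$ is a bundle morphism. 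Conversely, any fiberwise-linear $\Sigma$ covering $\sigma$ factors through $\mathbb{V}(E)\times_{X,\sigma}(G\times X)$ by the universal property of the fiber product and hence determines a unique $\phi$. That these two constructions are mutually inverse is then formal from the full faithfulness of $\mathbb{V}$.

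The heart of the matter is to check that the cocycle relation on $\phi$ corresponds exactly to the associativity axiom on $\Sigma$. I would pull everything back to $G\times G\times X$ and use the base-change isomorphisms of the first step to identify the three pullbacks $(\mu\times 1_X)^*\phi$, $(1_G\times\sigma)^*\phi$, and $p_{23}^*\phi$ with the bundle morphisms induced by $\mu\times 1_{\mathbb{V}(E)}$, by $1_G\times\Sigma$, and by the projection. Applying the contravariant functor $\mathbb{V}$, which reverses the order of composition, converts the composite $p_{23}^*\phi\circ(1_G\times\sigma)^*\phi$ into $\Sigma\circ(1_G\times\Sigma)$ and converts $(\mu\times 1_X)^*\phi$ into $\Sigma\circ(\mu\times 1_{\mathbb{V}(E)})$, so that the cocycle identity becomes precisely the associativity identity $\Sigma\circ(1_G\times\Sigma) = \Sigma\circ(\mu\times 1_{\mathbb{V}(E)})$. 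Finally, restricting the cocycle relation to $\{e\}\times\{e\}\times X$ forces $\phi|_{\{e\}\times X}$ to be the identity (here one uses that $\phi$ is an isomorphism), which translates into the unit axiom $\Sigma\circ(e\times 1_{\mathbb{V}(E)}) = 1_{\mathbb{V}(E)}$; together with associativity this says exactly that $\Sigma$ is an action.

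I expect the main obstacle to lie in the bookkeeping of the third step: one must verify that the several base-change isomorphisms produced in the first step are mutually compatible---a pseudofunctoriality (cocycle) condition for pullback along the composable maps $G\times G\times X \to G\times X \to X$---so that the identifications can be composed on the nose, and one must keep careful track of which projection or action map each pullback is taken along. Once these identifications are pinned down, the equivalence of the cocycle relation with the associativity axiom is a formal consequence of the contravariance of $\mathbb{V}$, and the remaining verifications (that $\Sigma$ covers $\sigma$ and is fiberwise linear) are immediate from the constructions.
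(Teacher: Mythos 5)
Your proposal is correct: the paper itself gives no proof of this lemma, citing Mumford's GIT \S 1.3 instead, and your argument is precisely the standard one from that source --- translating $\phi$ through the base-change isomorphisms for $\mathbb{V}$, using the (anti-)equivalence between locally free sheaves and vector bundles, and matching the cocycle relation against associativity, with the unit axiom correctly extracted from the cocycle relation plus the invertibility of $\phi$. Your closing caveat about pseudofunctoriality of the pullback identifications is exactly the right bookkeeping point, and it is routine to verify.
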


\begin{example}\label{example:linearizationrho}
Let $X$ be a scheme. Let $G$ be a group scheme. Let $G$ act on $X$  by the trivial action 
\[
\sigma = p_2 : G \times X \to X. 
\]
Any morphism $\rho : G \to GL_{n}$ of group schemes determines a $G$-linearization of $\mathcal{O}_X^n$ that can be described as follows. The vector bundle corresponding to $\mathcal{O}_X^n$ can be identified with affine $n$-space over $X$
\[
\mathbb{V}(\mathcal{O}_X^n) = \mathbb{A}_X^n.
\]
Example \ref{eg:action} describes an action 
\[
\Sigma_\rho : G \times \mathbb{A}_X^n \to \mathbb{A}_X^n
\]
of $G$ on $\mathbb{A}_X^n$ (over $X$). This action determines a $G$-linearization of $\mathcal{O}_X^n$. 
\end{example}

\begin{lemma}[{\cite[Section 1.3]{mumford1994geometric}}]\label{lemma:linearizationtensorproduct}
The tensor product of two $G$-linearized sheaves enjoys the structure of an induced $G$-linearization.  
\end{lemma}

\begin{lemma}\label{lemma:linearizationdirectsum}
The direct sum of two $G$-linearized sheaves enjoys the structure of an induced $G$-linearization. 
\end{lemma}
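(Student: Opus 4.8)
The plan is to mimic the construction underlying the tensor product case (Lemma \ref{lemma:linearizationtensorproduct}), replacing the tensor product by the direct sum throughout. Let $(E_1, \phi_1)$ and $(E_2, \phi_2)$ be two $G$-linearized locally free sheaves over $X$, so that each
\[
\phi_i : \sigma^* E_i \xrightarrow{\;\sim\;} p_2^* E_i
\]
is an isomorphism over $G \times X$ satisfying the cocycle relation of the preceding definition. The first step is to recall that pullback commutes with finite direct sums: for any morphism of schemes $f : T \to X$ there is a canonical isomorphism $f^*(E_1 \oplus E_2) \cong f^* E_1 \oplus f^* E_2$, natural in each $E_i$. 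This holds because $f^*(-) = f^{-1}(-) \otimes_{f^{-1}\mathcal{O}_X} \mathcal{O}_T$ and both $f^{-1}$ and $-\otimes \mathcal{O}_T$ preserve direct sums.

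With these identifications in hand, I would define the candidate linearization $\phi$ of $E_1 \oplus E_2$ as the composite
\[
\sigma^*(E_1 \oplus E_2) \cong \sigma^* E_1 \oplus \sigma^* E_2 \xrightarrow{\;\phi_1 \oplus \phi_2\;} p_2^* E_1 \oplus p_2^* E_2 \cong p_2^*(E_1 \oplus E_2).
\]
This is an isomorphism of sheaves over $G \times X$ because each $\phi_i$ is. It then remains to verify the cocycle relation
\[
p_{23}^* \phi \circ (1_G \times \sigma)^* \phi = (\mu \times 1_X)^* \phi
\]
over $G \times G \times X$. Since each of the pullback functors $p_{23}^*$, $(1_G \times \sigma)^*$, and $(\mu \times 1_X)^*$ commutes with direct sums, and since the canonical splitting isomorphisms are natural, both sides of this identity decompose as the direct sum of the corresponding expressions for $\phi_1$ and $\phi_2$. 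Each summand then equals its counterpart by the cocycle relation assumed for $\phi_1$ and $\phi_2$, so the relation for $\phi$ follows.

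The only point requiring genuine care — and the step I expect to be the main, though mild, obstacle — is the bookkeeping of the canonical isomorphisms $f^*(E_1 \oplus E_2) \cong f^* E_1 \oplus f^* E_2$ for the various morphisms $f$ appearing, together with the base-change compatibilities relating the iterated pullbacks $(1_G \times \sigma)^*\sigma^*$ and $p_{23}^*\sigma^*$ (and similarly for $p_2$). One must confirm that these identifications are mutually compatible, so that the componentwise cocycle relations really do assemble into the cocycle relation for $\phi$ rather than merely up to a discrepancy. This is a routine naturality verification, identical in spirit to the one implicit in Lemma \ref{lemma:linearizationtensorproduct}, and carries no essential difficulty beyond diagram chasing.
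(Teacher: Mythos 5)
Your proof is correct: defining the linearization as $\phi_1 \oplus \phi_2$ under the canonical identifications $f^*(E_1 \oplus E_2) \cong f^*E_1 \oplus f^*E_2$, and checking the cocycle relation componentwise via naturality, is exactly the standard argument. The paper itself states this lemma without proof, treating it as routine (in parallel with the tensor-product case cited to Mumford), so your write-up simply supplies the verification the paper leaves implicit and there is no divergence of approach to report.
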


\subsection{Algebraic spaces, stacks, and inertia}

We follow closely the notation and terminology of \cite{alper2013good}. 

\begin{definition}[{\cite[Section 2]{alper2013good}}]
An \emph{algebraic space} (over $k$) is a sheaf of sets 
\[
X : (\text{Sch}/k)_{\text{\'etale}}^{\text{op}} \longrightarrow \text{Set}
\]
such that 
\begin{enumerate}
\item The diagonal $\Delta_{X/k} : X \to X \times_k X$ is representable by schemes and quasi-compact. 
\item There exists an \'etale, surjective morphism $U \to X$ where $U$ is a scheme (over $k$).  
\end{enumerate}
\end{definition}

\begin{definition}[{\cite[Section 2]{alper2013good}}]
An \emph{Artin stack} (over $k$) is a category 
\[
p : \mathcal{X} \longrightarrow (\text{Sch}/k)_{\text{\'etale}}
\]
over $k$ with the following properties. 
\begin{enumerate}
\item The category $\mathcal{X}$ is a stack in groupoids over $(\text{Sch}/k)_{\text{\'etale}}$. 
\item The diagonal $\Delta_{\mathcal{X}/k} : \mathcal{X} \to \mathcal{X} \times_k \mathcal{X}$ is representable by algebraic spaces, separated, and quasi-compact. 
\item There exists a smooth, surjective map $U \to \mathcal{X}$ where $U$ is a scheme over $k$. 
\end{enumerate}
If the smooth, surjective morphism $U \to \mathcal{X}$ can be taken to be \'etale, then we call $\mathcal{X}$ a \emph{Deligne--Mumford} (DM) stack. 
\end{definition}

\begin{definition}
Let $p : \mathcal{X} \to \Spec k$ be an Artin stack. The \emph{inertia stack} $\mathcal{I}\mathcal{X}$ is the fibre product 
\[
\xymatrix{
\mathcal{I}\mathcal{X} \ar[rr] \ar[d] &  & \mathcal{X} \ar[d]^{\Delta_{\mathcal{X}/k}} \\
\mathcal{X} \ar[rr]_{\!\!\!\!\!\Delta_{\mathcal{X}/k}}& &  \mathcal{X} \times_k \mathcal{X}
}.
\]
\end{definition}

\begin{definition}[{\cite[Section 2.1]{alper2013good}}]
Let $p : \mathcal{X} \to \Spec k$ be an Artin stack. For a geometric point $x : \Spec K \to \mathcal{X}$ of $\mathcal{X}$, let $G_x$ denote the fibre product 
\[
\Aut x = \Spec K \times_{\mathcal{X}} \mathcal{I}\mathcal{X}.
\]
Because $\Delta_{\mathcal{X}/k}$ is separated, $G_x$ is a group scheme (see Lemma 99.19.1 of \cite{stacksproject}). We call $G_x$ the \emph{stabilizer group} associated to $x$. 
\end{definition}

\subsection{Moduli}

In this section, we consider the following situation. 

\begin{situation}\label{sit:pi}
Let $\mathcal{X}$ be an Artin stack. Let $X$ be an algebraic space. Let $\pi : \mathcal{X} \to X$ be a morphism of stacks (over $\Spec k$). 
\end{situation}

\begin{definition}[{\cite[Section 1]{conrad2005keel}}]
In Situation \ref{sit:pi}, the morphism $\pi$ is called a \emph{coarse moduli space} if the following properties are satisfied. 
\begin{enumerate}
\item The morphism $\pi$ is initial among all morphisms over $\Spec k$ to algebraic spaces over $\Spec k$. 
\item The induced map 
\[
[\mathcal{X}(k)] \longrightarrow X(k)
\]
is bijective, where $[\mathcal{X}(k)]$ denotes the set of isomorphism classes of objects in the small category $\mathcal{X}(k)$. (Remember that $k$ is algebraically closed.)
\end{enumerate}
\end{definition}

Here is a well-known situation in which a coarse moduli space exists. 

\begin{situation}\label{sit:coarse}
Let $p : \mathcal{X} \to \Spec k$ be an Artin stack. Assume the following hypotheses. 
\begin{enumerate}
\item The morphism $p$ is separated and locally of finite presentation. 
\item The projection $\mathcal{I}\mathcal{X} \to \mathcal{X}$ is finite.
\end{enumerate}
\end{situation}

\begin{theorem}[\cite{conrad2005keel,keel1997quotients}]
In Situation \ref{sit:coarse}, there is a coarse moduli space $\pi : \mathcal{X} \to X$. Moreover, it satisfies the following additional properties. 
\begin{enumerate}
\item The structure morphism $\bar{p} : X \to \Spec k$ is separated. 
\item The morphism $\pi$ is proper and quasi-finite. 
\end{enumerate}
\end{theorem}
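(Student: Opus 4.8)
The plan is to prove this as an instance of the Keel--Mori theorem (in the form refined by Conrad), by constructing the coarse space \'etale-locally on the target and then descending. The guiding principle is that both the coarse space and its universal property may be verified \'etale-locally on $X$, so the strategy is to produce a coarse moduli space over each member of an \'etale cover, to glue these into a global algebraic space $X$ with a morphism $\pi : \mathcal{X} \to X$, and finally to read off separatedness, properness, and quasi-finiteness from the explicit local models.

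The first step is to extract a local structure from the finiteness of inertia. The hypothesis that $\mathcal{I}\mathcal{X} \to \mathcal{X}$ is finite forces every geometric stabilizer $G_x$ to be a finite group scheme; since $k$ has characteristic zero, $G_x$ is \'etale, hence an honest finite group. Together with separatedness of $p$, this makes the diagonal $\Delta_{\mathcal{X}/k}$ finite. The crucial input, and what I expect to be the main obstacle, is a local presentation theorem: near any geometric point $x$, after an \'etale base change one has an isomorphism with a quotient stack $[\Spec A / G_x]$, where the finite group $G_x$ acts on an affine scheme $\Spec A$. Establishing this slice presentation is the technical heart of the argument; it rests on approximating the henselian neighborhood of $x$ by an actual $G_x$-equivariant \'etale neighborhood and on using finiteness of inertia to guarantee that the stabilizer is locally constant.

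Granting such a local model, the coarse space of $[\Spec A/G_x]$ is the affine quotient $\Spec(A^{G_x})$, and the natural map $[\Spec A/G_x] \to \Spec(A^{G_x})$ is manifestly a coarse moduli space: it is initial among morphisms to algebraic spaces because $A^{G_x}$ is precisely the ring of invariant functions, and it is bijective on geometric points because the $G_x$-orbits correspond to the points of the quotient. I would then check that these local quotients glue. Since $G_x$ is finite and $\operatorname{char} k = 0$, the formation of invariants is exact and commutes with the flat base changes used in the \'etale localizations, so the local coarse spaces descend to a global algebraic space $X$ together with $\pi : \mathcal{X} \to X$ satisfying the universal property in the definition of a coarse moduli space.

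Finally I would read off the additional properties from the local models. Quasi-finiteness of $\pi$ is immediate, since each geometric fibre is a single $G_x$-orbit and hence finite. For properness, $\pi$ is separated and of finite type, and it is universally closed because the finite surjection $\Spec A \to \Spec(A^{G_x})$ factors through $\pi$: a closed map that factors through $\pi$ via a surjection forces $\pi$ itself to be closed, and this persists under base change. Separatedness of $\bar{p} : X \to \Spec k$ then follows from separatedness of $p$ together with the fact that $\pi$ is proper and surjective, so that $X$ inherits separatedness (equivalently, one verifies the valuative criterion using surjectivity of $\pi$). I expect the local quotient presentation to be the genuinely difficult step, with the gluing, descent of invariants, and verification of the geometric properties being routine once that model is secured.
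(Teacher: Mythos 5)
The paper gives no proof of this statement at all---it is quoted as the Keel--Mori theorem with citations to \cite{keel1997quotients} and \cite{conrad2005keel}---and your outline is essentially a faithful summary of the argument in those references: finite inertia plus a separated diagonal yields a finite diagonal, one builds \'etale-local quotient presentations $[\Spec A/G_x]$ whose coarse spaces are $\Spec(A^{G_x})$, glues using that formation of invariants commutes with flat base change (automatic here since $\mathrm{char}\,k=0$ makes the finite stabilizers linearly reductive), and reads off quasi-finiteness, properness, and separatedness from the local models. You also correctly identify the local quotient presentation as the genuine technical heart, so the sketch is sound and consistent with the paper's (cited) proof.
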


There are other types of moduli, such as good and tame moduli,  studied by Alper \cite{alper2013good}. 

\begin{definition}[{\cite[Definition 4.1]{alper2013good}}]\label{def:goodmoduli}
In Situation \ref{sit:pi}, the morphism $\pi$ is called a \emph{good moduli space} if the following properties are satisfied. 
\begin{enumerate}
\item The morphism $\pi$ is cohomologically affine.
\item The natural morphism $\mathcal{O}_X \to \pi_*\mathcal{O}_{\mathcal{X}}$ is an isomorphism. 
\end{enumerate}
\end{definition}

\begin{definition}[{\cite[Definition 7.1]{alper2013good}}]
In Situation \ref{sit:pi}, the morphism $\pi$ is called a \emph{tame moduli space} if the following properties are satisfied. 
\begin{enumerate}
\item The morphism $\pi$ is a good moduli space. 
\item The induced map 
\[
[\mathcal{X}(k)] \longrightarrow X(k)
\]
is a bijection of sets.  
\end{enumerate}
\end{definition}

By imposing a suitable tameness condition, one can ensure that a coarse moduli space is always tame. 

\begin{definition}[{\cite[Definition 3.1]{abramovich2008tame}}]
In Situation \ref{sit:coarse}, we say that $\mathcal{X}$ is \emph{tame} if the pushforward functor 
\[
\pi_* : \text{QCoh} \mathcal{X} \longrightarrow \text{QCoh} X
\]
is exact, where $\pi : \mathcal{X} \to X$ denotes the coarse moduli space. 
\end{definition}

\begin{lemma}[{\cite[Theorem 3.2]{abramovich2008tame}}]
In Situation \ref{sit:pi}, the following are equivalent. 
\begin{enumerate}
\item The stack $\mathcal{X}$ is tame. 
\item For each geometric point $x : \Spec K \to \mathcal{X}$, the group scheme $G_x$ is linearly reductive. 
\end{enumerate}
\end{lemma}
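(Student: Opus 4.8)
The plan is to reduce the global exactness statement to a computation on an étale-local quotient presentation of $\cX$ over its coarse space, where the asserted equivalence becomes the defining property of linear reductivity. We work in the coarse-moduli setting, so $\pi : \cX \to X$ is the coarse moduli space and the inertia $\mathcal{I}\mathcal{X} \to \cX$ is finite; in particular every geometric stabilizer $G_x$ is a finite group scheme. Both conditions in the lemma are local on $X$ in the fppf (hence étale) topology: exactness of a functor between quasi-coherent sheaf categories may be tested after faithfully flat base change, the formation of $\pi_*$ commutes with flat base change on $X$ by cohomology and base change, and linear reductivity of a finite group scheme is itself fppf-local. I would therefore fix a geometric point $x$ with stabilizer $G = G_x$ and pass to an étale neighborhood of its image $\pi(x)$ in $X$.

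The essential geometric input is the local structure of $\cX$ over $X$. Because $\mathcal{I}\mathcal{X} \to \cX$ is finite, a slice (Luna-type) argument shows that, after replacing $X$ by a suitable étale cover, $\cX$ is isomorphic to a quotient stack $[\Spec A / G]$ on which $G$ acts on an affine scheme $\Spec A$, with the map to the coarse space identified with the canonical morphism $[\Spec A / G] \to \Spec A^{G}$. Under this presentation a quasi-coherent sheaf on $\cX$ is a $G$-equivariant $A$-module $M$, and $\pi_*$ is carried to the invariants functor $M \mapsto M^{G}$, viewed as an $A^{G}$-module. Producing this local model is the main obstacle; everything afterward is formal.

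Granting the local model, the equivalence follows from the definition of linear reductivity. For $(2)\Rightarrow(1)$, if $G$ is linearly reductive then $V \mapsto V^{G}$ is exact on the category of $G$-representations over the residue field, and one checks that this exactness propagates to $G$-equivariant $A$-modules, so $\pi_*$ is exact and $\cX$ is tame. For $(1)\Rightarrow(2)$, given any surjection $M \twoheadrightarrow N$ of finite-dimensional $G$-representations, regard $M$ and $N$ as $G$-equivariant $A$-modules supported at the fixed point, that is, push them forward along the closed immersion $BG = [\Spec K / G] \hookrightarrow [\Spec A / G]$; exactness of $\pi_*$ then forces $M^{G} \twoheadrightarrow N^{G}$ to be surjective, so invariants are exact on $\mathrm{Rep}(G)$ and $G$ is linearly reductive. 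As $x$ was arbitrary, the two conditions are equivalent. Finally, I note that in the characteristic-zero setting of this paper every finite group scheme is étale and hence linearly reductive by Maschke's theorem, so condition $(2)$ holds automatically and $\cX$ is always tame.
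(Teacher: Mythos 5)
First, a framing point: the paper offers no proof of this lemma at all --- it is quoted verbatim from \cite[Theorem 3.2]{abramovich2008tame} --- so the relevant comparison is with the proof given there. Your outline does track that proof in broad strokes (localize on the coarse space, reduce to a quotient presentation, identify $\pi_*$ with invariants), but it contains one genuine gap: the claim that ``because $\mathcal{I}\mathcal{X} \to \mathcal{X}$ is finite, a slice (Luna-type) argument'' yields an \'etale-local model $[\Spec A/G_x]$ over $X$. In the generality in which the cited theorem lives (arbitrary base, stabilizers finite flat group schemes, possibly infinitesimal in positive characteristic), no such slice theorem is available from finiteness of inertia alone; indeed the existence of exactly this local presentation is one of the equivalent conditions in \cite[Theorem 3.2]{abramovich2008tame}, and producing it from linear reductivity of the stabilizers is the technical heart of that paper (via the classification of linearly reductive finite group schemes as extensions of tame \'etale by diagonalizable ones, plus a deformation argument to spread $G_x$ out over an \'etale neighborhood). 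Luna-type slices presuppose smoothness and effectively characteristic zero, so what you call the ``main obstacle'' is not a known lemma you may grant yourself but the theorem itself. There is also a latent circularity in your $(1)\Rightarrow(2)$ direction: you invoke the local model there, yet (in general) its construction already requires linear reductivity. Fortunately your actual argument in that direction only uses pushforward along the closed immersion $i: BG_x \hookrightarrow \mathcal{X}$ of the residue gerbe, for which $i_*$ is exact and $\pi_* i_* F$ is the skyscraper $F^{G_x}$ at $\pi(x)$; this needs no local structure whatsoever, and it is exactly how \cite{abramovich2008tame} prove (a)$\Rightarrow$(b). You should excise the local model from that direction.

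Within the standing hypotheses of \emph{this} paper the situation is much better, and your closing remark is really the whole story: in characteristic zero, finite inertia forces every $G_x$ to be a finite \'etale group scheme, hence linearly reductive by Maschke, so condition (2) is automatic and every such $\mathcal{X}$ is tame --- which is precisely the fact the paper invokes later (``Statement (1) is because the characteristic of $k$ is zero''). In this setting $\mathcal{X}$ is Deligne--Mumford, and the classical local structure theorem for separated DM stacks (not a Luna slice) legitimately supplies $\mathcal{X} \simeq [U/G]$ \'etale-locally on $X$ with $G$ a finite constant group, after which your $(2)\Rightarrow(1)$ propagation of exactness from $\mathrm{Rep}(G)$ to equivariant $A$-modules goes through (invariants commute with filtered colimits, so exactness on finite-dimensional representations suffices). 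Two citations you assert but should supply: that tameness may be checked \'etale-locally on $X$ rests on the fact that formation of the coarse space commutes with flat base change (Keel--Mori as refined in \cite{conrad2005keel}), and note that the lemma's reference to Situation \ref{sit:pi} should really be Situation \ref{sit:coarse}, since tameness is only defined there --- your proof correctly works in the coarse-moduli setting.
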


\begin{lemma}[{\cite[Remark 7.3]{alper2013good}}]
In Situation \ref{sit:coarse}, if $\mathcal{X}$ is tame, then the coarse moduli space $\pi : \mathcal{X} \to X$ is a tame moduli space. 
\end{lemma}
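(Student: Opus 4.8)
The plan is to verify directly the two conditions in the definition of a tame moduli space, namely that $\pi$ is a good moduli space and that the induced map $[\mathcal{X}(k)] \to X(k)$ is a bijection of sets. The second of these is immediate: it is precisely condition (2) in the definition of a coarse moduli space, which $\pi$ satisfies by hypothesis. So the entire content lies in showing that $\pi$ is a good moduli space, that is, in verifying the two conditions of Definition \ref{def:goodmoduli}.

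First I would establish cohomological affineness. By the theorem of Keel and Mori recorded above, the coarse moduli space $\pi$ is proper, hence in particular quasi-compact and quasi-separated, so that the notion of cohomological affineness applies to $\pi$. Since $\mathcal{X}$ is assumed tame, the pushforward functor $\pi_* : \mathrm{QCoh}\,\mathcal{X} \to \mathrm{QCoh}\,X$ is exact by the very definition of tameness, and this exactness is exactly what is required for $\pi$ to be cohomologically affine. It is here, and essentially only here, that the tameness hypothesis is genuinely used: via the equivalence between tameness and linear reductivity of all the geometric stabilizer groups $G_x$, linear reductivity is what guarantees exactness of $\pi_*$. Without it—for instance in positive characteristic—$\pi_*$ may fail to be exact, which is the obstruction the hypothesis removes.

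Next I would check that the natural morphism $\mathcal{O}_X \to \pi_*\mathcal{O}_\mathcal{X}$ is an isomorphism. This is a standard property of coarse moduli spaces and may be verified \'etale-locally on $X$. By the local structure of the Keel--Mori construction for tame stacks, one presents $\mathcal{X}$ \'etale-locally over $X$ as a quotient stack $[\Spec A / G]$ with $G$ a finite (linearly reductive) group acting on $\Spec A$, and correspondingly $X$ is $\Spec A^G$. Then $\pi_*\mathcal{O}_\mathcal{X}$ is the quasi-coherent sheaf associated to $\Gamma([\Spec A/G],\mathcal{O}) = A^G$, which agrees with $\mathcal{O}_X = \widetilde{A^G}$; being \'etale-local, this identification glues to the desired global isomorphism.

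Combining the two preceding paragraphs, $\pi$ is a good moduli space, and together with the bijectivity of $[\mathcal{X}(k)] \to X(k)$ it is a tame moduli space, as claimed. The only step requiring genuine input is cohomological affineness, where the equivalence of tameness with linear reductivity of the stabilizers is precisely what forces $\pi_*$ to be exact; the remaining verifications are formal once the Keel--Mori theorem and the relevant definitions are in place.
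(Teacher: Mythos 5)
Your proposal is correct, and it is exactly the expected argument: the paper gives no proof of its own here, deferring to \cite[Remark 7.3]{alper2013good}, whose content is precisely this unwinding of definitions --- bijectivity on $k$-points comes for free from the coarse moduli property, tameness (exactness of $\pi_*$) together with quasi-compactness of the proper Keel--Mori morphism $\pi$ gives cohomological affineness, and $\mathcal{O}_X \xrightarrow{\sim} \pi_*\mathcal{O}_{\mathcal{X}}$ is part of the Keel--Mori package, checkable \'etale-locally via the $[\Spec A/G]$, $X = \Spec A^G$ presentation as you do. Your identification of cohomological affineness as the one step where tameness is genuinely needed is also on point.
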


\begin{lemma}[{\cite[Lemma 2.8]{nironi2008moduli} or \cite{olsson2003quot}}]\label{lem:projection}
In Situation \ref{sit:coarse}, write $\pi : \mathcal{X} \to X$ for the coarse moduli space. Let $\mathcal{F}$ be a quasi-coherent sheaf over $\mathcal{X}$ and $G$ a coherent sheaf over $X$. Assume $\mathcal{X}$ is DM and tame. Then there is a natural isomorphism  
\[
\pi_*(\pi^*G \otimes \mathcal{F}) \simeq G \otimes \pi_*\mathcal{F}. 
\]
\end{lemma}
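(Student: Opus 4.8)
The plan is to produce the standard projection-formula morphism and then show it is an isomorphism by reducing to an explicit local computation. First I would write down the canonical map using the adjunction $\pi^\ast \dashv \pi_\ast$. Its counit gives $\varepsilon \colon \pi^\ast\pi_\ast\mathcal{F} \to \mathcal{F}$, and tensoring with $\pi^\ast G$ yields $\pi^\ast(G \otimes \pi_\ast\mathcal{F}) \cong \pi^\ast G \otimes \pi^\ast\pi_\ast\mathcal{F} \xrightarrow{1 \otimes \varepsilon} \pi^\ast G \otimes \mathcal{F}$; passing back across the adjunction produces a natural morphism
\[
\theta \colon G \otimes \pi_\ast\mathcal{F} \longrightarrow \pi_\ast(\pi^\ast G \otimes \mathcal{F}).
\]
The goal is to prove that $\theta$ is an isomorphism. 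Since $\theta$ is natural and its formation commutes with étale base change on $X$ — pullback and tensor product always commute with base change, and for a tame stack the coarse-space pushforward $\pi_\ast$ commutes with arbitrary base change on $X$ (Abramovich--Olsson--Vistoli) — I may check the assertion étale-locally on $X$.

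Working locally, I would invoke the étale-local structure theory of tame Deligne--Mumford stacks: after passing to an étale cover I may assume $X = \Spec R$ and $\cX \cong [\Spec A / \Gamma]$, where $\Gamma$ is a finite (hence, in characteristic zero, linearly reductive) group scheme acting on an $R$-algebra $A$ with $R = A^{\Gamma}$, and $\pi$ is induced by the inclusion $A^{\Gamma} \hookrightarrow A$. In this presentation a quasi-coherent sheaf $\mathcal{F}$ is a $\Gamma$-equivariant $A$-module $M$, its pushforward $\pi_\ast\mathcal{F}$ is the invariant submodule $M^{\Gamma}$, and the coherent sheaf $G$ is an $R$-module $N$. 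Unwinding definitions, $\pi^\ast G \otimes \mathcal{F}$ corresponds to $N \otimes_R M$ with $\Gamma$ acting only through $M$, so $\theta$ becomes the canonical comparison map $N \otimes_R M^{\Gamma} \to (N \otimes_R M)^{\Gamma}$. Because $\Gamma$ is finite and $k$ has characteristic zero, the Reynolds operator $e = \tfrac{1}{|\Gamma|}\sum_{g \in \Gamma} g$ is an $R$-linear idempotent on $M$ (the elements of $R = A^{\Gamma}$ are $\Gamma$-fixed, so averaging is $R$-linear) that projects onto $M^{\Gamma}$ and splits $M = M^{\Gamma} \oplus \ker e$. Tensoring this splitting with $N$ over $R$ and observing that the invariants of $N \otimes_R M$ are precisely the image of the averaging idempotent $1_N \otimes e$ identifies $(N \otimes_R M)^{\Gamma}$ with $N \otimes_R M^{\Gamma}$; this identification is exactly $\theta$, which is therefore an isomorphism. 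Notably this argument needs no flatness or coherence hypothesis on $N$.

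The substantive input is the reduction step rather than the final computation. The hard part will be justifying that the statement is étale-local on $X$ — that is, the local quotient presentation of a tame Deligne--Mumford stack by a finite linearly reductive group together with the base-change compatibility of $\pi_\ast$ for tame stacks. Granting these, the remaining verification is the elementary linear-reductivity argument above, and it is precisely here that tameness (equivalently, linear reductivity of the geometric stabilizers) is indispensable: without it the averaging idempotent need not exist, $\pi_\ast$ need not be exact, and $\theta$ genuinely fails to be an isomorphism.
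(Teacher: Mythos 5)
The paper itself gives no proof of this lemma --- it simply cites \cite[Lemma 2.8]{nironi2008moduli} --- and your argument is correct and is essentially the proof found there: the canonical adjunction map, étale-local reduction on $X$ to a presentation $\mathcal{X} \simeq [\Spec A/\Gamma]$ with $X = \Spec A^\Gamma$, and the linear-reductivity computation identifying $N \otimes_R M^\Gamma \xrightarrow{\sim} (N \otimes_R M)^\Gamma$ via the Reynolds splitting $M = M^\Gamma \oplus \ker e$ (which, as you note, correctly avoids any flatness hypothesis on $N$). Two minor simplifications: for the reduction you only need that $\pi_*$ commutes with flat base change (true for any quasi-compact, quasi-separated $\pi$), not the stronger tame arbitrary-base-change theorem; and since $k$ is algebraically closed of characteristic zero, the finite group schemes $\Gamma$ are constant groups with $|\Gamma|$ invertible, so the averaging idempotent is well defined exactly as you use it.
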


\begin{lemma}[{\cite[Remark 1.4]{nironi2008moduli}}]\label{lem:H-bullet}
In Situation \ref{sit:coarse}, if $\mathcal{X}$ is tame, then for each quasicoherent sheaf $\mathcal{F}$ over $\mathcal{X}$, there are isomorphsims 
\[
H^\bullet(\mathcal{X}, \mathcal{F}) \simeq H^\bullet(X, \pi_*\mathcal{F}).
\] 
\end{lemma}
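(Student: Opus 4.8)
The plan is to realize both $H^\bullet(\mathcal{X},\mathcal{F})$ and $H^\bullet(X,\pi_*\mathcal{F})$ as derived functors of global-section functors related through the coarse moduli morphism, and then to use tameness to collapse the comparison. Write $p:\mathcal{X}\to\Spec k$ and $\bar p:X\to\Spec k$, so that $p=\bar p\circ\pi$. By definition $H^\bullet(\mathcal{X},\mathcal{F})$ is the right derived functor of $\Gamma(\mathcal{X},-)$ applied to $\mathcal{F}$, computed via an injective resolution in $\mathrm{QCoh}\,\mathcal{X}$, and likewise $H^\bullet(X,\pi_*\mathcal{F})$ is the right derived functor of $\Gamma(X,-)$ applied to $\pi_*\mathcal{F}$. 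The starting observation is the factorization of functors $\Gamma(\mathcal{X},-)=\Gamma(X,-)\circ\pi_*$, since a global section over $\mathcal{X}$ is the same datum as a global section of its pushforward.

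First I would invoke the Grothendieck spectral sequence associated to this composition, namely
\[
E_2^{i,j}=H^i\big(X,\,R^j\pi_*\mathcal{F}\big)\;\Longrightarrow\;H^{i+j}(\mathcal{X},\mathcal{F}),
\]
where $R^j\pi_*$ denotes the right derived functors of $\pi_*:\mathrm{QCoh}\,\mathcal{X}\to\mathrm{QCoh}\,X$. Next I would bring in the hypothesis: by the definition of tameness recalled above, the functor $\pi_*$ is \emph{exact}. An exact functor between abelian categories has vanishing higher derived functors, so $R^j\pi_*\mathcal{F}=0$ for all $j\geq 1$ while $R^0\pi_*\mathcal{F}=\pi_*\mathcal{F}$. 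Consequently every row of the $E_2$-page other than $j=0$ vanishes, the spectral sequence degenerates, and the surviving edge maps furnish the desired isomorphisms $H^i(X,\pi_*\mathcal{F})\simeq H^i(\mathcal{X},\mathcal{F})$ for every $i$.

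The point requiring care — and the one I expect to be the crux — is justifying that exactness of the \emph{global} pushforward $\pi_*$ genuinely forces the \emph{higher direct images} $R^j\pi_*$ to vanish, i.e. that the derived functors computed from injective resolutions in $\mathrm{QCoh}\,\mathcal{X}$ agree with the geometric higher direct images and that these are annihilated by the exactness hypothesis. Equivalently, one must ensure the Grothendieck spectral sequence is available in the stacky setting, which amounts to checking that $\pi_*$ carries injective quasi-coherent sheaves to $\Gamma(X,-)$-acyclic sheaves. The cleanest route is to work in the unbounded derived category and appeal to the general composition law $Rp_*=R\bar p_*\circ R\pi_*$ together with the identity $R\pi_*\mathcal{F}=\pi_*\mathcal{F}$, the latter being exactly the statement that exactness of $\pi_*$ kills its higher derived functors; taking cohomology of both sides then yields the claim. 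Conceptually the vanishing is local on $X$ and reduces, via the characterization of tameness by linear reductivity of the geometric stabilizer groups, to the exactness of the invariants functor for a linearly reductive group acting on an affine scheme, which is precisely what makes the argument work in characteristic zero.
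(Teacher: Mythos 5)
Your proof is correct and takes essentially the same route as the paper's source for this lemma: the paper gives no independent argument, citing Nironi's Remark 1.4, whose content is precisely the Grothendieck--Leray spectral sequence for the factorization $\Gamma(\mathcal{X},-)=\Gamma(X,-)\circ\pi_*$ degenerating because tameness makes $\pi_*$ exact. Your extra care about the one genuinely delicate point --- that exactness of $\pi_*$ on quasi-coherent sheaves kills the geometric higher direct images, justified by reducing \'etale-locally on $X$ to invariants of a linearly reductive group acting on an affine scheme --- is the standard way to make the remark-level citation rigorous, so nothing is missing.
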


\subsection{Stability}

\begin{definition}[{\cite[Definition 11.1]{alper2013good}}]\label{def:stable}
Let $p : \mathcal{X} \to \Spec k$ be a quasi-compact Artin stack. Let $\mathcal{L}$ be an invertible sheaf of $\mathcal{O}_{\mathcal{X}}$-modules. Let $x : \Spec K \to \mathcal{X}$ be a geometric point of $\mathcal{X}$.
\begin{enumerate}
\item We say $x$ is \emph{semi-stable} with respect to $\mathcal{L}$ if there is a section $\sigma \in \Gamma(\mathcal{X}, \mathcal{L}^n)$ for some $n > 0$ such that $\sigma(x) \ne 0$ and $\mathcal{X}_{\sigma} \to 
 \Spec k$ is cohomologically affine. 
\item We say $x$ is \emph{stable} with respect to $\mathcal{L}$ if there is a section $\sigma \in \Gamma(\mathcal{X}, \mathcal{L}^n)$ for some $n > 0$ such that $\sigma(x) \ne 0$, $\mathcal{X}_{\sigma} \to \Spec k$ is cohomologically affine, and $\mathcal{X}_\sigma$ has closed orbits.  
\end{enumerate}
Denote by $\mathcal{X}_{\mathcal{L}}^{\text{ss}}$ and $\mathcal{X}_{\mathcal{L}}^{\text{s}}$ the corresponding open substacks corresponding to the semistable and stable points respectively. 
\end{definition}

\begin{example}\label{example:quotient}
Let $p : Y \to \Spec k$ be a quasi-compact algebraic scheme. Let $G$ be a linearly reductive group scheme over $k$ acting on $Y$. Let $L$ be a $G$-linearization on $Y$. Then $L$ determines an invertible sheaf $\mathcal{L}$ on the quotient stack $\mathcal{X} = [Y/G]$. As a result, we obtain open substacks $\mathcal{X}_{\mathcal{L}}^{\text{ss}}$ and $\mathcal{X}_{\mathcal{L}}^{\text{s}}$. Moreover, via the natural morphism $Y \to \mathcal{X}$, these determine open subschemes $Y_{\text{ss}}^L$ and $Y_{\text{s}}^L$ of $Y$. According to \cite[Subsection 13.5]{alper2013good}, the open subschemes $Y_{\text{ss}}^L$ and $Y_{\text{s}}^L$ are the same as those defined in \cite{mumford1994geometric}. 
\end{example}

The next result follows from  \cite[Theorem 13.6]{alper2013good} or equivalently \cite[Theorem 1.10]{mumford1994geometric}.

\begin{lemma}\label{lem:alperquotient}
Let $\mathcal{X}$ be the quotient stack of Example \ref{example:quotient} with the invertible sheaf $\mathcal{L}$. 
\begin{enumerate}
\item There is a good moduli space $\pi : \mathcal{X}_{\mathcal{L}}^{\textnormal{ss}} \to X$ with $X$ an open subscheme of $\textnormal{Proj}_k \bigoplus_{d \geqslant 0} (p_* \mathcal{L}^d)^G$. 
\item There is an open subscheme $V \subset X$ such that $\pi^{-1}(V) = \mathcal{X}_{\mathcal{L}}^{\textnormal{s}}$ and $\pi_{\mathcal{X}_{\mathcal{L}}^{\textnormal{s}}} : \mathcal{X}_{\mathcal{L}^{\textnormal{s}}} \to V$ is a tame moduli space. 
\item If $\mathcal{X}_{\mathcal{L}}^{\textnormal{ss}}$ is quasi-compact, then there exists an ample line bundle $M$ on $X$ such that $\pi^*M \simeq \mathcal{L}^N$ for some $N$. 
\end{enumerate}
\end{lemma}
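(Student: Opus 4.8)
The plan is to treat this lemma as a translation of \cite[Theorem 13.6]{alper2013good} (equivalently \cite[Theorem 1.10]{mumford1994geometric}) into the present notation, so that the substance lies in matching objects rather than in any new construction. First I would invoke Example \ref{example:quotient} to identify the open substacks $\mathcal{X}_{\mathcal{L}}^{\textnormal{ss}} = [Y_{\textnormal{ss}}^L/G]$ and $\mathcal{X}_{\mathcal{L}}^{\textnormal{s}} = [Y_{\textnormal{s}}^L/G]$, where $Y_{\textnormal{ss}}^L$ and $Y_{\textnormal{s}}^L$ are Mumford's (semi)stable loci. Since $G$ is linearly reductive and $Y$, hence $Y_{\textnormal{ss}}^L$, is quasi-compact, the cited theorem produces a good moduli space $\pi : \mathcal{X}_{\mathcal{L}}^{\textnormal{ss}} \to X$. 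To recover the asserted description of $X$, I would identify the graded ring of invariants appearing in the $\textnormal{Proj}$ construction: because $\Gamma(\mathcal{X}, \mathcal{L}^d) = \Gamma(Y, L^d)^G = (p_*\mathcal{L}^d)^G$, the GIT quotient $Y_{\textnormal{ss}}^L \sslash G$ is an open subscheme of $\textnormal{Proj}_k \bigoplus_{d \geqslant 0} (p_*\mathcal{L}^d)^G$, and this open subscheme is exactly $X$. This establishes part (1).

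For part (2), I would set $V := \pi(\mathcal{X}_{\mathcal{L}}^{\textnormal{s}})$ and use the classical fact from GIT that the stable locus is $\pi$-saturated, so that $\pi^{-1}(V) = \mathcal{X}_{\mathcal{L}}^{\textnormal{s}}$ and $V$ is open in $X$. On the stable locus the orbits are closed, which is built into Definition \ref{def:stable}(2), and in characteristic zero the finite stabilizer groups are automatically linearly reductive; hence the restriction $\pi|_{\mathcal{X}_{\mathcal{L}}^{\textnormal{s}}}$ is not merely a good moduli space but induces a bijection on $k$-points, which is precisely the condition distinguishing a tame moduli space from a good one.

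For part (3), under the quasi-compactness hypothesis I would descend a power of $\mathcal{L}$ to an ample bundle on $X$. Concretely, the tautological sheaf on $\textnormal{Proj}_k \bigoplus_{d \geqslant 0} (p_*\mathcal{L}^d)^G$ is ample, and after restricting to the open subscheme $X$ it yields a bundle $M$ whose pullback under $\pi$ is a power $\mathcal{L}^N$. Quasi-compactness of $\mathcal{X}_{\mathcal{L}}^{\textnormal{ss}}$ is what guarantees that the graded ring is generated in bounded degree, so that a single uniform $N$ works and $M$ is genuinely ample rather than merely semiample.

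I expect the main obstacle to be the bookkeeping in part (3): ensuring that one fixed power $N$ simultaneously descends $\mathcal{L}^N$ and produces an ample, not just relatively ample, line bundle on $X$. This is exactly where quasi-compactness enters, and it is the step most easily glossed over when one simply cites the GIT correspondence. The saturation claim $\pi^{-1}(V) = \mathcal{X}_{\mathcal{L}}^{\textnormal{s}}$ in part (2) likewise warrants explicit justification, since it is what makes the stable locus the full preimage of an open set rather than merely a subset mapping into one.
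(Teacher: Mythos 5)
Your proposal is correct and follows the same route as the paper, which simply cites \cite[Theorem 13.6]{alper2013good} (equivalently \cite[Theorem 1.10]{mumford1994geometric}) without further argument; your translation dictionary --- identifying $\mathcal{X}_{\mathcal{L}}^{\textnormal{ss}} = [Y_{\textnormal{ss}}^L/G]$ via Example \ref{example:quotient}, matching $\Gamma(\mathcal{X},\mathcal{L}^d) = \Gamma(Y, L^d)^G = (p_*\mathcal{L}^d)^G$, the saturation of the stable locus for (2), and the Veronese/bounded-degree role of quasi-compactness in (3) --- is exactly the bookkeeping the paper leaves implicit in that citation. The details you flag as delicate (that $\mathcal{O}(1)$ on the $\textnormal{Proj}$ is only an ample invertible sheaf after passing to a suitable Veronese power, which is where $N$ comes from) are handled inside Alper's proof, so nothing further is needed.
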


\subsection{Frame bundles}

\begin{definition}[{\cite[Situation 08JT]{stacksproject}}]
Let $p : \mathcal{X} \to \Spec k$ be an Artin stack. Let $\mathcal{E}, \mathcal{F}$ be quasi-coherent sheaves of $\mathcal{O}_{\mathcal{X}}$-modules. Let $\funhom_{\mathcal{X}}(\mathcal{E}, \mathcal{F})$ denote the functor 
\[
\funhom_{\mathcal{X}}(\mathcal{E}, \mathcal{F}) : (\text{Sch}/k)^{\text{op}} \longrightarrow \text{Set}
\]
defined as follows. If $T$ is a scheme, then an element of $\funhom_{\mathcal{X}}(\mathcal{E}, \mathcal{F})(T)$ consists of a pair $(h,u)$, where $h$ is a morphism $h : T \to \mathcal{X}$ of stacks and $u : h^*\mathcal{E} \to h^*\mathcal{F}$ is an $\mathcal{O}_T$-module morphism. The functor $\funhom_{\mathcal{X}}(\mathcal{E}, \mathcal{F})$ is equipped with a natural morphism to $\mathcal{X}$ described by sending a pair $(h,u)$ to the morphism $h$.  
\end{definition}

\begin{example}\label{eg:homvectorbundle}
Let $p : X \to \Spec k$ be a scheme. Let $E,F$ be quasi-coherent sheaves of $\mathcal{O}_X$-modules. The functor $\funhom_{X}(E,F)$ is representable by the scheme $\mathbb{V}(\sheafhom(E, F))$.
\end{example}

\begin{example}
Given a quasi-coherent sheaf $\mathcal{E}$ over $\mathcal{X}$, the functor $\funhom_{\mathcal{X}}(\mathcal{E}, \mathcal{O}_{\mathcal{X}}^r)$ is naturally a $GL_r$-torsor, where the action is left multiplication. 
\end{example}

\begin{lemma}\label{lem:Hom-i-m}
Let $p : \mathcal{X} \to \Spec k$ be an Artin stack. Let $\mathcal{V}$ be a locally free sheaf of rank $r$ over $\mathcal{X}$. Set $\mathcal{L} = \det \mathcal{V}$. For each pair of positive integers $i,m$, there is a natural morphism 
\[
\funhom_{\mathcal{X}}(\mathcal{V}, \mathcal{O}_{\mathcal{X}}^r) \longrightarrow \funhom_k(p_*(\Sym^i\mathcal{V} \otimes \mathcal{L}^m), \Sym^i (\mathcal{O}_S^r))
\]
over $\Spec k$. Moreover, the morphism is equivariant if the right-hand side is viewed as a $GL_r$-torsor where the action is described by $ \Sym^i\rho \otimes (\det \rho)^{\otimes m} $ if $\rho$ denotes the fundamental representation of $GL_r$. 
\end{lemma}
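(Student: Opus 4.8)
The plan is to write the natural transformation down explicitly on $T$-points and then verify the two claimed properties—functoriality in $T$ and $GL_r$-equivariance—by direct inspection. Throughout set $\mathcal{F} := \Sym^i\mathcal{V}\otimes\mathcal{L}^m$ and $W := p_*\mathcal{F}$, so that, for a scheme $T\to\Spec k$, a $T$-point of the target functor is an $\mathcal{O}_T$-linear map $W\otimes_k\mathcal{O}_T \to \Sym^i(k^r)\otimes_k\mathcal{O}_T$, while a $T$-point of the source is a pair $(h,u)$ with $h:T\to\mathcal{X}$ and $u:h^*\mathcal{V}\to\mathcal{O}_T^r$.

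First I would build, from a fixed $T$-point $(h,u)$, a map out of $h^*\mathcal{F}$. Applying the symmetric power functor gives $\Sym^i u : h^*\Sym^i\mathcal{V}\to\Sym^i(\mathcal{O}_T^r)$, and the top exterior power gives $\det u : h^*\mathcal{L}\to\det(\mathcal{O}_T^r)$. Using the canonical trivializations $\det(\mathcal{O}_T^r)\simeq\mathcal{O}_T$ and $\Sym^i(\mathcal{O}_T^r)\simeq\Sym^i(k^r)\otimes_k\mathcal{O}_T$, together with the fact that pullback commutes with $\Sym$, $\det$, and tensor products, the tensor product $\Sym^i u\otimes(\det u)^{\otimes m}$ becomes a morphism
\[
\phi_u : h^*\mathcal{F}\longrightarrow \Sym^i(k^r)\otimes_k\mathcal{O}_T .
\]

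Next I would precompose $\phi_u$ with an evaluation map so as to land on the correct source. The counit of the adjunction $p^*\dashv p_*$ is a morphism $p^*W = p^*p_*\mathcal{F}\to\mathcal{F}$ of $\mathcal{O}_{\mathcal{X}}$-modules; pulling back along $h$, using $h^*p^* = (p\circ h)^*$ together with the base-change identification $(p\circ h)^*W\simeq W\otimes_k\mathcal{O}_T$ along $T\to\Spec k$, yields a map $\mathrm{ev} : W\otimes_k\mathcal{O}_T\to h^*\mathcal{F}$. The composite $\phi_u\circ\mathrm{ev}$ is then a $T$-point of the target, and I would declare it to be the image of $(h,u)$. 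Verifying naturality in $T$—that this assignment commutes with pullback along any $T'\to T$—is routine once one records that each ingredient ($\Sym^i$, $\det$, and especially the counit $\mathrm{ev}$) is stable under base change; checking the base-change compatibility of this evaluation map is the one point that genuinely requires care, and is the main (if mild) obstacle.

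Finally I would check equivariance. The group $GL_r$ acts on the source by left multiplication, $u\mapsto g\circ u$, while it acts on $\Sym^i(k^r)$—and hence by post-composition on the target—through $\Sym^i\rho\otimes(\det\rho)^{\otimes m}$. Since $\Sym^i(g\circ u)=\Sym^i(g)\circ\Sym^i(u)$ and $\det(g\circ u)=\det(g)\cdot\det(u)$, one obtains $\phi_{g\circ u} = \bigl(\Sym^i(g)\cdot(\det g)^m\bigr)\circ\phi_u$, and because $\mathrm{ev}$ does not involve $u$ the image of $(h,g\circ u)$ equals $\bigl(\Sym^i(g)\cdot(\det g)^m\bigr)\circ(\phi_u\circ\mathrm{ev})$. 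This is precisely $g$ acting on the image of $(h,u)$ via $\Sym^i\rho\otimes(\det\rho)^{\otimes m}$, which establishes the asserted equivariance.
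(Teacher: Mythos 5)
Your proposal is correct and follows essentially the same route as the paper's proof: both construct the map on $T$-points by composing the pullback along $h$ of the adjunction counit $p^*p_*(\Sym^i\mathcal{V}\otimes\mathcal{L}^m)\to\Sym^i\mathcal{V}\otimes\mathcal{L}^m$ with the morphisms $\Sym^i u$ and $(\det u)^{\otimes m}$ induced by $u$, using the trivialization $\det(\mathcal{O}_T^r)\simeq\mathcal{O}_T$. You in fact supply the equivariance computation and the base-change bookkeeping that the paper waves off with ``some details omitted,'' so your write-up is, if anything, more complete.
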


\begin{proof}
For notational convenience, write 
\[
\psi_{i,k} : p^*p_*(\Sym^i \mathcal{V} \otimes \mathcal{L}^m) \longrightarrow \Sym^i \mathcal{V} \otimes \mathcal{L}^m
\]
for the natural adjunction morphism. Let $T$ be a scheme over $S$. Let $h : T \to \mathcal{X}$ be a morphism of stacks. Let $u : h^*\mathcal{V} \to \mathcal{O}_T^r$ be morphism of $\mathcal{O}_T$-modules. Note that for each $i$, the morphism $u$ determines a morphism 
\[
h^*(\Sym^i \mathcal{V}) \simeq  \Sym^i (h^*\mathcal{V})  \xrightarrow{\;\; u \;\;} \Sym^i(\mathcal{O}_T^r).
\]
In addition, $u$ determines a morphism 
\[
h^*\mathcal{L} = h^*\det \mathcal{V} \simeq \det h^*\mathcal{V} \xrightarrow{\;\;u\;\;} \det(\mathcal{O}_T^r) \simeq \mathcal{O}_T
\]
and so for each $m$ a morphism  
\[
h^*\mathcal{L}^m \xrightarrow{\;\;u\;\;} \mathcal{O}_T
\]
as well. Using this, we see that we have 
\begin{itemize}
\item a morphism $p \circ h : T \to \Spec k$ depicted by 
\[
T \xrightarrow{\;\; h\;\;}\mathcal{X} \xrightarrow{\;\; p \;\;} \Spec k
\]
\item and an $\mathcal{O}_T$-module morphism 
\[
(p \circ h)^*p_*(\Sym^i \mathcal{V} \otimes \mathcal{L}^m) \longrightarrow (p \circ h)^*(\Sym^i(\mathcal{O}_{\Spec k}^r))
\]
depicted by 
\[
h^*p^*p_*(\Sym^i\mathcal{V} \otimes \mathcal{L}^m) \xrightarrow{u \circ h^*\psi_{i,k}\;}  \Sym^i (\mathcal{O}_T^r) \simeq (p \circ h)^* (\Sym^i(\mathcal{O}_{\Spec k}^r)).
\]
\end{itemize}
Some details omitted. 
\end{proof}

\begin{definition}
Let $p : \mathcal{X} \to \Spec k$ be an Artin stack. Let $\mathcal{E}, \mathcal{F}$ be quasi-coherent sheaves of $\mathcal{O}_{\mathcal{X}}$-modules. We can consider the subfunctor 
\[
\funisom_{\mathcal{X}}(\mathcal{E}, \mathcal{F}) \subset \funhom_{\mathcal{X}}(\mathcal{E}, \mathcal{F})
\]
whose value on a scheme $T$ consists of the set of pairs $(h,u)$ where $h$ is a morphism $h : T \to \mathcal{X}$ of stacks and $u :  h^*\mathcal{E} \to h^*\mathcal{F}$ is an $\mathcal{O}_T$-module \emph{isomorphism}. 
\end{definition}

\begin{definition}
Let $p : \mathcal{X} \to \Spec k$ be an Artin stack. Let $\mathcal{V}$ be a locally free sheaf over $\mathcal{X}$ of rank $r$. The \emph{frame bundle of $\mathcal{V}$} is defined to be the functor 
\[
\Fr(\mathcal{V}) = \funisom_{\mathcal{X}}(\mathcal{V}, \mathcal{O}_{\mathcal{X}}^r)
\]
over $\mathcal{X}$. 
\end{definition}

\begin{lemma}\label{lem:frame-i-m}
Let $p : \mathcal{X} \to \Spec k$ be an Artin stack. Let $\mathcal{V}$ be a locally free sheaf of rank $r$ over $\mathcal{X}$. Set $\mathcal{L} = \det \mathcal{V}$. For each pair of positive integers $i,m$, there is a natural morphism 
\[
\Fr(\mathcal{V})  \longrightarrow   \funhom_k(p_*(\Sym^i\mathcal{V} \otimes \mathcal{L}^m), \Sym^i(\mathcal{O}_{\Spec k}^r)).
\]
over $\Spec k$. Moreover, the morphism is equivariant if the right-hand side is viewed as a $GL_r$-torsor where the action is described by $ \Sym^i\rho \otimes (\det \rho)^{\otimes m} $ if $\rho$ denotes the fundamental representation of $GL_r$. 
\end{lemma}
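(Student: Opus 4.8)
The plan is to deduce the lemma directly from Lemma \ref{lem:Hom-i-m} by restriction along an inclusion of subfunctors. First I would recall that, by definition, the frame bundle $\Fr(\mathcal{V}) = \funisom_{\mathcal{X}}(\mathcal{V}, \mathcal{O}_{\mathcal{X}}^r)$ is the subfunctor of $\funhom_{\mathcal{X}}(\mathcal{V}, \mathcal{O}_{\mathcal{X}}^r)$ whose value on a test scheme $T$ consists of those pairs $(h,u)$ for which $u : h^*\mathcal{V} \to \mathcal{O}_T^r$ is an \emph{isomorphism} rather than merely a morphism. Composing this inclusion with the morphism constructed in Lemma \ref{lem:Hom-i-m} produces a morphism
\[
\Fr(\mathcal{V}) \hookrightarrow \funhom_{\mathcal{X}}(\mathcal{V}, \mathcal{O}_{\mathcal{X}}^r) \longrightarrow \funhom_k(p_*(\Sym^i\mathcal{V} \otimes \mathcal{L}^m), \Sym^i(\mathcal{O}_{\Spec k}^r))
\]
over $\Spec k$, which is the asserted map. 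On $T$-points it sends a frame $(h,u)$ to the $\mathcal{O}_T$-module morphism obtained by composing $h^*\psi_{i,k}$ with the map induced by $\Sym^i u$ and $(\det u)^{\otimes m}$, exactly as at the end of the proof of Lemma \ref{lem:Hom-i-m}; since that construction never used injectivity or surjectivity of $u$, its restriction to the subfunctor of isomorphisms is automatically well defined.

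For equivariance, I would observe that the $GL_r$-action on $\Fr(\mathcal{V})$ by left multiplication (post-composition with $GL_r$ acting on $\mathcal{O}_{\mathcal{X}}^r$) is precisely the restriction of the corresponding action on $\funhom_{\mathcal{X}}(\mathcal{V}, \mathcal{O}_{\mathcal{X}}^r)$, and that $\funisom_{\mathcal{X}}(\mathcal{V}, \mathcal{O}_{\mathcal{X}}^r)$ is stable under it since $g \circ u$ is an isomorphism whenever $u$ is. Thus the equivariance statement of Lemma \ref{lem:Hom-i-m} restricts verbatim: replacing $u$ by $g \circ u$ for $g \in GL_r(T)$ multiplies $\Sym^i u$ by $\Sym^i g$ and $(\det u)^{\otimes m}$ by $(\det g)^{\otimes m}$ through functoriality of $\Sym^i$ and $\det$, so the induced map on $\Sym^i(\mathcal{O}_T^r)$ transforms through $\Sym^i\rho \otimes (\det\rho)^{\otimes m}$, exactly as required.

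The main obstacle is essentially bookkeeping rather than mathematics: one must check that the $GL_r$-action on the target and the functoriality of $\Sym^i$ and $\det$ combine to produce precisely the representation $\Sym^i\rho \otimes (\det\rho)^{\otimes m}$, with the determinant twist carrying the correct exponent $m$. Since all of the substantive content, namely the construction of the map and the verification of its equivariance, was already carried out in Lemma \ref{lem:Hom-i-m}, the remaining work is only to note that every step there is compatible with passing to the open, and in particular $GL_r$-stable, subfunctor of isomorphisms, which is immediate.
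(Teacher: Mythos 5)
Your proposal is correct and matches the paper's proof, which simply states that the lemma follows immediately from Lemma \ref{lem:Hom-i-m}; your restriction of that morphism along the $GL_r$-stable inclusion $\Fr(\mathcal{V}) = \funisom_{\mathcal{X}}(\mathcal{V}, \mathcal{O}_{\mathcal{X}}^r) \subset \funhom_{\mathcal{X}}(\mathcal{V}, \mathcal{O}_{\mathcal{X}}^r)$ is exactly the intended argument, just spelled out in more detail than the paper bothers to record.
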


\begin{proof}
Follows immediately from Lemma \ref{lem:Hom-i-m}.
\end{proof}

\subsection{Ample and generating sheaves}

In this section, we consider the following situation. 

\begin{situation}\label{sit:locallyfree}
In Situation \ref{sit:coarse}, write $\pi : \mathcal{X} \to X$ for the coarse moduli space. Let $\mathcal{E}$ denote a locally free sheaf over $\mathcal{X}$. Make the following assumptions. 
\begin{enumerate}
\item The stack $\mathcal{X}$ is DM. 
\item There is a positive integer $N$ such that $N$ is a multiple of the order of each geometric stabilizer group $G_x$ for $x : \Spec k \to \mathcal{X}$. Moreover, it is sufficient to select $N$ to be minimal with this property. 
\end{enumerate}
\end{situation}

\begin{definition}
In Situation \ref{sit:locallyfree}, we say that $\mathcal{E}$ is \emph{generating} if for each quasi-coherent sheaf $\mathcal{F}$ over $\mathcal{X}$, the natural morphism 
\[
\pi^*\pi_* \sheafhom(\mathcal{E}, \mathcal{F}) \otimes \mathcal{E} \longrightarrow \mathcal{F}
\]
is surjective. 
\end{definition}

\begin{lemma}[{\cite[Section 5.2]{kresch2009geometry}}]
In Situation \ref{sit:locallyfree}, the sheaf $\mathcal{E}$ is generating if and only if for each geometric point of $\mathcal{X}$, with $G$ the geometric stabilizer group, the representation of $G$ on the fibre of $\mathcal{E}$ at the geometric point contains every irreducible representation of $G$. 
\end{lemma}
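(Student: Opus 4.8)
The plan is to reduce the global surjectivity condition defining a generating sheaf to a pointwise statement about representations of the stabilizer groups, and then to settle the latter by Schur's lemma. Two preliminary reductions are harmless. First, the morphism $\Psi_{\mathcal{F}} : \pi^{*}\pi_{*}\sheafhom(\mathcal{E},\mathcal{F})\otimes\mathcal{E}\to\mathcal{F}$ is functorial in $\mathcal{F}$ and commutes with filtered colimits, so since every quasi-coherent sheaf is a filtered colimit of coherent subsheaves, it suffices to test surjectivity on coherent $\mathcal{F}$. Second, in Situation \ref{sit:locallyfree} the stack $\mathcal{X}$ is a Deligne--Mumford stack with finite inertia over a field of characteristic zero, so its geometric stabilizers are finite groups and hence linearly reductive; consequently $\mathcal{X}$ is tame, and $\pi_{*}$ is exact. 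I will use this exactness together with the projection formula of Lemma \ref{lem:projection}, and will need that $\pi_{*}$ commutes with restriction to a fibre of $\pi$.

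Fix a geometric point $x:\Spec k\to\mathcal{X}$ with stabilizer $G = G_{x}$, and let $\iota : BG\hookrightarrow\mathcal{X}$ denote the inclusion of the residual gerbe at $x$ (a classifying stack $BG$, since $k$ is algebraically closed); then $\iota^{*}\mathcal{E}$ is the fibre representation $E$ of $G$ and $\iota^{*}\mathcal{F}$ a representation $F$. The central step is to identify the restriction $\iota^{*}\Psi_{\mathcal{F}}$ with the evaluation morphism
\[
\Hom_{G}(E,F)\otimes_{k}E\longrightarrow F,\qquad \phi\otimes e\longmapsto\phi(e)
\]
on $BG$. This uses three facts: that $\sheafhom(\mathcal{E},-)$ is compatible with $\iota^{*}$ because $\mathcal{E}$ is locally free; that $\pi^{*}$ commutes with base change; and that $\pi_{*}$ restricts to the functor of $G$-invariants on $BG$, which is where tameness enters. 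Surjectivity of $\Psi_{\mathcal{F}}$ may be checked \'etale-locally on $\mathcal{X}$ and then, for coherent sheaves, fibrewise by Nakayama's lemma; thus $\Psi_{\mathcal{F}}$ is surjective if and only if the displayed evaluation map is surjective at every geometric point $x$. Because every finite-dimensional representation $F$ of $G_{x}$ is realized as $\iota^{*}\mathcal{F}$ for some coherent $\mathcal{F}$, the sheaf $\mathcal{E}$ is generating if and only if, for every $x$, the evaluation map $\Hom_{G_{x}}(E_{x},F)\otimes E_{x}\to F$ is surjective for every representation $F$ of $G_{x}$.

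It remains to prove the representation-theoretic statement: for a finite group $G$ and a representation $E$, the evaluation map $\Hom_{G}(E,F)\otimes_{k}E\to F$ is surjective for every $F$ if and only if every irreducible representation of $G$ occurs in $E$. Writing $E\cong\bigoplus_{\lambda}V_{\lambda}^{\oplus a_{\lambda}}$ in terms of the irreducibles $V_{\lambda}$ and using Schur's lemma, one sees that the image of the evaluation map is exactly the sum of those isotypic components $F_{\lambda}$ of $F$ with $a_{\lambda}>0$; hence it is surjective for all $F$ precisely when $a_{\lambda}>0$ for every $\lambda$, with necessity seen by taking $F$ to be the regular representation. Combining this equivalence over all geometric points $x$ yields the lemma.

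I expect the main obstacle to be the identification in the second paragraph, namely showing that restricting $\Psi_{\mathcal{F}}$ to the residual gerbe reproduces the evaluation morphism. The content there is the interchange of $\pi_{*}$ with restriction to the fibre, which fails without tameness; in an \'etale-local presentation $\mathcal{X}\cong[\Spec A/G]$ this amounts to comparing $\Hom_{A}(E_{A},F_{A})^{G}\otimes_{A^{G}}k$ with $\Hom_{G}(E,F)$, and the comparison map is surjective by an averaging (Reynolds operator) argument that relies on $\mathcal{E}$ being locally free. Once this identification is secured, the Nakayama reduction and the Schur's lemma computation are routine.
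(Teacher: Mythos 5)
Your proposal is sound, but note first that the paper itself does not prove this lemma: it is quoted from Kresch's survey (Section 5.2), which in turn rests on the generating-sheaf theorem of Olsson--Starr and its treatment by Nironi. What you have written is essentially a self-contained reconstruction of that standard argument, and the skeleton is correct: reduce to coherent $\mathcal{F}$ by filtered colimits, check surjectivity of $\pi^*\pi_*\sheafhom(\mathcal{E},\mathcal{F})\otimes\mathcal{E}\to\mathcal{F}$ on fibres at geometric points by Nakayama, identify the fibre of this map at $x$ with the evaluation map $\Hom_{G_x}(E_x,F_x)\otimes E_x\to F_x$ using the surjectivity of the base-change map $\bigl(\pi_*(\mathcal{E}^\vee\otimes\mathcal{F})\bigr)\otimes k \to \Hom_{G_x}(E_x,F_x)$, and finish with the Schur-lemma computation of the image of the evaluation map. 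You have also correctly located the only genuinely non-formal input: the base-change surjectivity, which in a local model $[\Spec A/G_x]$ (available \'etale-locally by the structure theory of tame stacks, here automatic in characteristic zero) is exactly the Reynolds-operator argument you sketch, and which is where tameness is used.

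Two points deserve more care than your sketch gives them. First, the assertion that every finite-dimensional representation $F$ of $G_x$ is $\iota^*\mathcal{F}$ for a \emph{coherent} $\mathcal{F}$ is immediate only when $x$ is a closed point, where the residual gerbe $BG_x$ is a closed substack and $\iota_*F$ works; for an arbitrary geometric point the residual gerbe is merely locally closed, and you must either extend a coherent sheaf from the locally closed substack (standard on a locally noetherian stack, but it should be said) or argue that it suffices to violate generation at a single point, taking that point closed. Second, in the ``if'' direction your Nakayama reduction only requires the evaluation map to be surjective for those $F$ arising as fibres of coherent sheaves, which is weaker than ``all $F$'', so the two directions of your pointwise equivalence are used asymmetrically; the logic goes through, but as written the sentence ``$\mathcal{E}$ is generating if and only if the evaluation map is surjective for every representation $F$'' quietly merges these two quantifiers. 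Neither issue is a flaw in the approach --- both are repaired by standard facts --- and the representation-theoretic core (image of evaluation equals the sum of isotypic components $F_\lambda$ with $a_\lambda>0$, necessity via the regular representation) is exactly right.
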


\begin{definition}[{\cite[Definition 2.1]{nironi2008moduli}}]
In Situation \ref{sit:locallyfree}, we say that $\mathcal{E}$ is \emph{$\pi$-ample} if for each geometric point of $\mathcal{X}$, with $G$ the geometric stabilizer group, the representation of $G$ on the fibre of $\mathcal{E}$ at the geometric point is faithful.
\end{definition}

The next result follows from a theorem of Burnside, which enjoys several extensions, such as one due to Steinberg \cite[Theorem 7]{steinberg2014burnside}, which states that every irreducible representation of a finite group is contained in some symmetric power of a faithful representation. 

\begin{lemma}\label{lem:generating}
In Situation \ref{sit:locallyfree}, if $\mathcal{E}$ is $\pi$-ample, then $\bigoplus_{i=0}^N \Sym^i\mathcal{E}$ is a generating sheaf. 
\end{lemma}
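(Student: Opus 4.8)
The plan is to reduce the statement about sheaves on $\mathcal{X}$ to a pointwise statement about representations of the finite stabilizer groups, where the cited theorem of Burnside--Steinberg applies directly. By the Lemma of Kresch recalled above (the generating criterion), a locally free sheaf $\mathcal{G}$ on $\mathcal{X}$ is generating if and only if, at every geometric point $x : \Spec k \to \mathcal{X}$ with geometric stabilizer group $G = G_x$, the fibre $\mathcal{G}_x$ contains every irreducible representation of $G$ as a subrepresentation. So the whole problem becomes: \emph{show that for each geometric point, the fibre of $\bigoplus_{i=0}^N \Sym^i \mathcal{E}$ at that point contains every irreducible representation of the corresponding stabilizer group.}

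First I would fix a geometric point and let $G = G_x$ be its stabilizer group, a finite group scheme over $k$, and let $W = \mathcal{E}_x$ denote the fibre of $\mathcal{E}$, which is a finite-dimensional representation of $G$. Taking fibres commutes with $\Sym^i$ and with direct sums, so the fibre of $\bigoplus_{i=0}^N \Sym^i \mathcal{E}$ at $x$ is $\bigoplus_{i=0}^N \Sym^i W$ as a $G$-representation. The hypothesis that $\mathcal{E}$ is $\pi$-ample says precisely that $W$ is a \emph{faithful} representation of $G$. By the theorem of Steinberg \cite[Theorem 7]{steinberg2014burnside}, since $W$ is faithful, every irreducible representation of the finite group $G$ appears as a subrepresentation of $\Sym^i W$ for some $i \geq 0$. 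Thus every irreducible representation of $G$ appears in $\bigoplus_{i \geq 0} \Sym^i W$.

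The one point that requires genuine care, and which I expect to be the main obstacle, is the uniform bound: Steinberg's theorem guarantees that each irreducible appears in \emph{some} symmetric power, but a priori the power needed could be arbitrarily large, whereas the statement truncates the sum at $i = N$. The key observation here is that $N$ is, by hypothesis in Situation \ref{sit:locallyfree}, a common multiple of the orders of all the stabilizer groups, so in particular the order $|G|$ of the group at our fixed point divides $N$, and in any case $|G| \leq N$. I would therefore want to invoke (or verify) the sharper form of the Burnside--Steinberg result asserting that for a faithful representation $W$ of a finite group $G$, every irreducible representation already occurs in $\bigoplus_{i=0}^{|G|-1} \Sym^i W$, or at least within $\bigoplus_{i=0}^{|G|} \Sym^i W$. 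Concretely, for the cyclic case one checks that a faithful character and its powers up to the group order already exhaust all characters; in general one argues via the fact that the character of $W$ takes $|G|$ distinct values no larger than the dimension on the conjugacy classes, so the Vandermonde-type nonvanishing forces every irreducible to appear by degree $|G|-1$. Since $|G| \leq N$ for every stabilizer, the truncated sum $\bigoplus_{i=0}^N \Sym^i W$ already contains every irreducible, uniformly across all geometric points.

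Having established the pointwise conclusion for every geometric point, I would then conclude by the Kresch generating criterion that $\bigoplus_{i=0}^N \Sym^i \mathcal{E}$ is generating, completing the proof. The logical skeleton is short precisely because the two cited results do the heavy lifting; the care is entirely in matching the truncation degree $N$ against the quantitative version of Steinberg's theorem, and in confirming that fibres of symmetric powers and direct sums are computed as expected so that the reduction to representation theory is legitimate.
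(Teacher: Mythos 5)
Your overall route is the same as the paper's: the paper offers no separate proof of this lemma, deriving it from the fibrewise generating criterion of Kresch together with the Burnside--Steinberg theorem quoted immediately before it, which is exactly your reduction. You are right, and in fact more careful than the paper, to isolate the truncation at $i = N$ as the one point the citation does not cover, since Steinberg's theorem only produces \emph{some} symmetric power with no a priori degree bound.

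However, the justification you sketch for the uniform bound is flawed as stated: the Vandermonde/distinct-character-values argument (Burnside--Brauer) bounds the degree in which an irreducible occurs among the \emph{tensor} powers $W^{\otimes i}$, whose characters are the powers $\chi^i$ of the character $\chi$ of $W$; the character of $\Sym^i W$ is not $\chi^i$, so that argument does not transfer to symmetric powers. (Also, $\chi$ takes at most as many distinct values as there are conjugacy classes, not $|G_x|$ values.) The bound you want is nevertheless true, and a short correct proof can be substituted: since $W = \mathcal{E}_x$ is faithful, the fixed locus in $W^\vee$ of each nontrivial $g \in G_x$ is a proper linear subspace, so over the infinite field $k$ there is a point $\lambda \in W^\vee$ with trivial stabilizer; evaluating polynomial functions $k[W^\vee] \simeq \bigoplus_{i \geqslant 0} \Sym^i W$ on the free orbit $G_x \cdot \lambda$ gives a $G_x$-equivariant surjection onto the regular representation, and Lagrange-type interpolation (products of at most $|G_x|-1$ affine-linear functions) shows that polynomials of degree at most $|G_x|-1$ already surject; by semisimplicity the regular representation is then a direct summand of $\bigoplus_{i=0}^{|G_x|-1} \Sym^i W$, so every irreducible of $G_x$ occurs in that truncated sum. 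Since $|G_x|$ divides $N$, the truncation at $N$ suffices uniformly over all geometric points, and the remainder of your reduction (fibres commute with $\Sym^i$ and with direct sums; $\pi$-ampleness says precisely that the fibre is a faithful representation; then apply the Kresch criterion) is correct.
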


The next result can be found in the proof of  \cite[Lemma 2.12]{edidin2001brauer}. 

\begin{lemma}\label{lem:framerepresentable}
In Situation \ref{sit:locallyfree}, if $\mathcal{E}$ is $\pi$-ample, then the frame bundle $\Fr(\mathcal{E})$ is representable by an algebraic space. 
\end{lemma}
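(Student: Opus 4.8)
The plan is to exhibit $\Fr(\mathcal{E})$ as an algebraic stack whose inertia is trivial, and then invoke the standard criterion that such a stack is an algebraic space. First I would recall, from the example preceding the definition of the frame bundle, that $\Fr(\mathcal{E}) = \funisom_{\mathcal{X}}(\mathcal{E}, \mathcal{O}_{\mathcal{X}}^r)$ sits inside the $GL_r$-torsor $\funhom_{\mathcal{X}}(\mathcal{E}, \mathcal{O}_{\mathcal{X}}^r)$ over $\mathcal{X}$ as the open (indeed clopen) locus of isomorphisms, so that the structure morphism $\Fr(\mathcal{E}) \to \mathcal{X}$ is representable and affine. Since $\mathcal{X}$ is a DM stack, it follows formally that $\Fr(\mathcal{E})$ is itself an algebraic stack with representable, separated, quasi-compact diagonal. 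It then suffices to show that the inertia $\mathcal{I}\Fr(\mathcal{E}) \to \Fr(\mathcal{E})$ is an isomorphism, equivalently that the diagonal of $\Fr(\mathcal{E})$ over $\Spec k$ is a monomorphism; see \cite{stacksproject}.

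Next I would compute the automorphism group scheme of a typical object of $\Fr(\mathcal{E})$. An object over a scheme $T$ is a pair $(h,u)$ with $h : T \to \mathcal{X}$ and an $\mathcal{O}_T$-module isomorphism $u : h^*\mathcal{E} \xrightarrow{\sim} \mathcal{O}_T^r$, and a $2$-automorphism of $(h,u)$ is an automorphism $\alpha$ of the morphism $h$ whose induced automorphism $\alpha_{\mathcal{E}}$ of $h^*\mathcal{E}$ satisfies $u \circ \alpha_{\mathcal{E}} = u$, forcing $\alpha_{\mathcal{E}} = \mathrm{id}$. Thus the automorphism group of $(h,u)$ is the kernel of the homomorphism of group schemes $\underline{\Aut}(h) \to \underline{\Aut}(h^*\mathcal{E})$ recording the action on the fibre. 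Specializing to a geometric point $x : \Spec K \to \mathcal{X}$, this kernel is exactly the kernel of the representation of the stabilizer $G_x$ on the fibre $x^*\mathcal{E}$.

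Here the hypothesis enters decisively: by definition, $\pi$-ampleness of $\mathcal{E}$ means precisely that the representation of $G_x$ on $x^*\mathcal{E}$ is faithful, so this kernel is trivial for every geometric point. Because $\mathcal{X}$ is DM and $k$ has characteristic zero, each $G_x$, and hence each automorphism group scheme above, is finite and reduced; a reduced finite-type group scheme over $k$ all of whose geometric points are trivial is itself trivial. Consequently $\mathcal{I}\Fr(\mathcal{E}) \to \Fr(\mathcal{E})$ is an isomorphism, and $\Fr(\mathcal{E})$ is an algebraic space.

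The main obstacle I anticipate is the bookkeeping of the third step: correctly identifying the $2$-automorphisms of $(h,u)$ with the kernel of the stabilizer action on the fibre, and then upgrading the pointwise vanishing of stabilizers to triviality of the inertia \emph{as a group scheme} rather than merely on geometric points. The characteristic-zero assumption, which guarantees reducedness of the relevant finite group schemes, is exactly what makes this last passage immediate.
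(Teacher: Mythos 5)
Your proof is correct and is essentially the argument the paper relies on: the paper's ``proof'' is just a pointer to the proof of Lemma 2.12 in \cite{edidin2001brauer}, which in the same way identifies the automorphism group of a point of the frame bundle with the kernel of the stabilizer representation on the fibre, notes this kernel is trivial by $\pi$-ampleness, and concludes by the standard criterion that a stack with trivial inertia is an algebraic space. Two small caveats that do not affect the argument: the isomorphism locus is open but \emph{not} clopen in $\funhom_{\mathcal{X}}(\mathcal{E}, \mathcal{O}_{\mathcal{X}}^r)$ (it is the nonvanishing locus of the determinant, e.g.\ $\mathbb{G}_m \subset \mathbb{A}^1$ when $r=1$), and your final passage from trivial inertia fibres to trivial inertia as a group space also uses that the inertia of a DM stack is unramified (here even finite, by Situation \ref{sit:coarse}), so that the identity section is a surjective open immersion --- which, together with your characteristic-zero reducedness observation, is exactly what makes the criterion apply.
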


\begin{lemma}
In Situation \ref{sit:locallyfree}, assume $\mathcal{E}$ has rank $r$ and is $\pi$-ample. Then the natural morphism $\Fr(\mathcal{E}) \to \mathcal{X}$ descends to an isomorphism 
\[
[\Fr(\mathcal{E})/GL_{r}] \xrightarrow{\;\;\sim\;\;} \mathcal{X}
\]
of stacks over $S$. 
\end{lemma}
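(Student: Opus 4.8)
The plan is to recognize $\Fr(\mathcal{E}) \to \mathcal{X}$ as a $GL_r$-torsor and then appeal to the standard fact that $[P/G] \simeq \mathcal{X}$ for any $G$-torsor $P \to \mathcal{X}$. First I would make the $GL_r$-action on $\Fr(\mathcal{E})$ explicit: a $T$-point of $\Fr(\mathcal{E})$ is a pair $(h,u)$ with $h : T \to \mathcal{X}$ and $u : h^*\mathcal{E} \xrightarrow{\;\sim\;} \mathcal{O}_T^r$ an isomorphism, and $g \in GL_r(T)$ acts by post-composition, $(h,u) \mapsto (h, g \circ u)$. Since this leaves the first component $h$ unchanged, the structure morphism $\Fr(\mathcal{E}) \to \mathcal{X}$, $(h,u) \mapsto h$, is $GL_r$-invariant and hence descends to a morphism $[\Fr(\mathcal{E})/GL_r] \to \mathcal{X}$; the content of the lemma is that this descended morphism is an isomorphism.

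Next I would verify that $\Fr(\mathcal{E}) \to \mathcal{X}$ is genuinely a $GL_r$-torsor. The action is free because $u$ is invertible, so $g \circ u = u$ forces $g$ to be the identity after cancelling $u^{-1}$. For local triviality, I would choose a smooth (indeed \'etale, as $\mathcal{X}$ is DM) atlas $a : U \to \mathcal{X}$ over which $a^*\mathcal{E}$ is free of rank $r$; any trivialization $a^*\mathcal{E} \simeq \mathcal{O}_U^r$ is a section of $\Fr(\mathcal{E})$ over $U$ and therefore induces an isomorphism $\Fr(\mathcal{E}) \times_{\mathcal{X}} U \simeq GL_r \times U$. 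Thus $\Fr(\mathcal{E}) \to \mathcal{X}$ becomes trivial after the cover $U \to \mathcal{X}$, which is exactly the torsor condition. By Lemma \ref{lem:framerepresentable} the total space $\Fr(\mathcal{E})$ is an algebraic space, so the quotient stack $[\Fr(\mathcal{E})/GL_r]$ is a well-defined algebraic stack.

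It then remains to produce a quasi-inverse to $[\Fr(\mathcal{E})/GL_r] \to \mathcal{X}$. Given $h : T \to \mathcal{X}$, pulling back the torsor along $h$ yields a $GL_r$-torsor $Q = \Fr(\mathcal{E}) \times_{\mathcal{X}} T \to T$ together with its $GL_r$-equivariant projection $Q \to \Fr(\mathcal{E})$, that is, a $T$-point of $[\Fr(\mathcal{E})/GL_r]$. Conversely, a $T$-point of the quotient stack is a $GL_r$-torsor $Q \to T$ equipped with an equivariant map $Q \to \Fr(\mathcal{E})$; composing with the invariant morphism $\Fr(\mathcal{E}) \to \mathcal{X}$ and using invariance to descend along $Q \to T$ produces a morphism $T \to \mathcal{X}$. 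I would then check that these two assignments are functorial in $T$ and mutually inverse, yielding the desired equivalence of stacks.

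The routine-looking but genuinely delicate step is the last one: verifying that the two constructions are quasi-inverse as morphisms of \emph{groupoid}-valued functors, including their compatibility on isomorphisms and the $2$-categorical coherence of the descent in the second construction. Everything else is the standard torsor formalism, and the only input special to our setting --- representability of $\Fr(\mathcal{E})$, which guarantees that $[\Fr(\mathcal{E})/GL_r]$ is an honest algebraic stack rather than a mere fppf quotient --- is supplied by Lemma \ref{lem:framerepresentable}.
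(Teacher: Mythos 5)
Your proposal is correct, and it matches the paper's intent: the paper states this lemma without any proof, treating it as the standard fact that the frame bundle of a locally free sheaf is a $GL_r$-torsor and that $[P/GL_r] \simeq \mathcal{X}$ for any torsor $P \to \mathcal{X}$, with $\pi$-ampleness entering only through Lemma \ref{lem:framerepresentable} to make $\Fr(\mathcal{E})$ an algebraic space so that the quotient stack is defined in the usual sense. You identify exactly these ingredients (freeness and simple transitivity via invertibility of $u$, local triviality over an \'etale atlas refined so that $a^*\mathcal{E}$ is free, and the torsor-pullback quasi-inverse), so your argument is precisely the one the paper leaves implicit.
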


\subsection{Amplitude and properness}

In this section, we consider the following situation. 

\begin{situation}\label{sit:proper}
In Situation \ref{sit:coarse}, write $\pi : \mathcal{X} \to X$ for the coarse moduli space. Assume the following.  
\begin{enumerate}
\item The stack $\mathcal{X}$ is DM.
\item The morphism $p : \mathcal{X} \to \Spec k$ is  
proper.  (In particular, it is of finite type and of finite presentation.) 
\end{enumerate}
\end{situation}

\begin{lemma}
In Situation \ref{sit:proper}, the following are true. 
\begin{enumerate}
\item The stack $\mathcal{X}$ is tame. 
\item There is a positive integer $N$ such that $N$ is a multiple of the order of each geometric stabilizer group $G_x$ for $x : \Spec k \to \mathcal{X}$. Moreover, it is sufficient to select $N$ to be minimal with this property. 
\end{enumerate}
\end{lemma}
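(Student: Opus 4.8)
The plan is to establish the two assertions separately, obtaining tameness from the characteristic-zero hypothesis and the boundedness of stabilizer orders from the finiteness of the inertia. For the first assertion, I would argue as follows. Since $\mathcal{X}$ is a Deligne--Mumford stack, the diagonal $\Delta_{\mathcal{X}/k}$ is unramified, so for each geometric point $x$ the stabilizer $G_x = \Aut x$ is finite over $\Spec k$ (by the finiteness of $\mathcal{I}\mathcal{X} \to \mathcal{X}$ assumed in Situation \ref{sit:coarse}) and reduced (by unramifiedness). Over the algebraically closed field $k$ of characteristic zero, $G_x$ is therefore a finite constant group scheme, i.e.\ a finite group. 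By Maschke's theorem the group algebra $k[G_x]$ is semisimple, so every such $G_x$ is linearly reductive. Invoking the criterion recalled above from \cite{abramovich2008tame}, that $\mathcal{X}$ is tame precisely when every geometric stabilizer is linearly reductive, I conclude that $\mathcal{X}$ is tame.

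For the second assertion, the essential point is to bound the orders $|G_x|$ uniformly in $x$. Because $p : \mathcal{X} \to \Spec k$ is proper, $\mathcal{X}$ is quasi-compact and of finite type over $k$. Writing $q : \mathcal{I}\mathcal{X} \to \mathcal{X}$ for the projection, which is finite by hypothesis, the order $|G_x|$ equals the length of the scheme-theoretic fiber $q^{-1}(x)$; in characteristic zero this fiber is reduced, so $|G_x|$ is simply its number of geometric points. This length is the rank of the coherent sheaf $q_*\mathcal{O}_{\mathcal{I}\mathcal{X}}$ at $x$, and a coherent sheaf on a quasi-compact stack has bounded fiber rank. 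Hence the function $x \mapsto |G_x|$ takes only finitely many values as $x$ ranges over the geometric points of $\mathcal{X}$.

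With the orders bounded, I would finish by setting $N = \operatorname{lcm}\{|G_x| : x\}$, which is a well-defined positive integer precisely because the set of values is finite. By construction $N$ is divisible by every $|G_x|$, and any integer divisible by all of the $|G_x|$ is a multiple of their least common multiple, so $N$ is minimal with the stated property. The main obstacle is the boundedness step in the previous paragraph: everything rests on the finiteness of $q : \mathcal{I}\mathcal{X} \to \mathcal{X}$ together with the quasi-compactness supplied by properness, and once the set of stabilizer orders is known to be finite the remainder of the argument is purely formal.
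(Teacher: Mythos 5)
Your proposal is correct and takes essentially the same route as the paper's (deliberately terse) proof: tameness follows from characteristic zero via linear reductivity of the finite stabilizers together with the criterion of \cite{abramovich2008tame}, and the uniform bound on stabilizer orders follows from finiteness of $\mathcal{I}\mathcal{X} \to \mathcal{X}$ combined with the quasi-compactness supplied by properness, after which $N = \operatorname{lcm}\{|G_x|\}$ is evidently the minimal choice. You have simply supplied, correctly, the standard details that the paper's two-sentence proof leaves implicit.
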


\begin{proof}
Statement (1) is because the characteristic of $k$ is zero. Statement (2) follows because the morphism $\mathcal{IX} \to \mathcal{X}$ is finite and $\mathcal{X} \to \Spec k$ is proper. 
\end{proof}

\begin{lemma}\label{lem:proper}
In Situation \ref{sit:proper}, the following are true. 
\begin{enumerate}
\item The morphism $\bar{p}: X\to \Spec k$ is proper.
\item The stack $\mathcal{X}$ is Noetherian. 
\item The space $X$ is Noetherian. 
\item If $\mathcal{L}$ is an invertible sheaf on $\mathcal{X}$, then there is an invertible sheaf $L$ on $X$ and an isomorphism 
\[
\mathcal{L}^N \simeq \pi^*L
\]
of sheaves over $\mathcal{X}$.
\item The pushforward functor $\pi_*$ maps coherent sheaves to coherent sheaves. 
\item The pullback functor $\pi^*$ maps coherent sheaves to coherent sheaves. 
\end{enumerate}
\end{lemma}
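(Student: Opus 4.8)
The plan is to dispose of the formal statements (1)--(3), (5), (6) quickly and to concentrate effort on the descent statement (4), which is the crux. For (1), I would use the cancellation property for properness: writing $p = \bar{p} \circ \pi$, the morphism $p$ is proper by hypothesis, the coarse moduli map $\pi$ is proper and surjective by the Keel--Mori theorem recalled above, and $\bar{p}$ is separated. Since $p$ is proper, $\pi$ is proper and surjective, and $\bar{p}$ is separated, the cancellation law forces $\bar{p}$ to be proper. For (2), properness of $p$ implies $p$ is of finite type, and a stack of finite type over the Noetherian base $\Spec k$ is Noetherian. For (3), part (1) shows $\bar{p}$ is proper, hence of finite type, so $X$ is of finite type over $\Spec k$ and therefore Noetherian.

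The heart of the lemma is (4), where the idea is that raising $\mathcal{L}$ to the $N$-th power trivializes the stabilizer action on fibers, so that $\mathcal{L}^N$ descends along $\pi$. Concretely, for a geometric point $x : \Spec k \to \mathcal{X}$ with stabilizer $G_x$, the fiber $\mathcal{L}|_x$ is a one-dimensional representation of $G_x$, i.e.\ a character $\chi : G_x \to \mathbb{G}_m$. Every such character satisfies $\chi^{|G_x|} = 1$, and since $|G_x|$ divides $N$ we obtain $\chi^N = 1$; hence $G_x$ acts trivially on $(\mathcal{L}^N)|_x$ for every $x$. I would then set $L := \pi_*(\mathcal{L}^N)$ and show that the adjunction map $\pi^* L \to \mathcal{L}^N$ is an isomorphism and that $L$ is invertible. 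Because $\mathcal{X}$ is tame and DM, both assertions can be checked \'etale-locally on $X$, where the local structure theory of tame DM stacks presents $\mathcal{X}$ as a quotient $[\Spec B / G]$ with $G$ finite and coarse space $\Spec B^G$; on such a chart $\mathcal{L}^N$ is a $G$-equivariant invertible $B$-module whose fibers carry trivial $G$-action, and passing to $G$-invariants (exact by tameness) yields an invertible $B^G$-module whose pullback recovers $\mathcal{L}^N$.

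Finally, (5) follows from the coherence theorem for proper morphisms: $\pi$ is proper and both $\mathcal{X}$ and $X$ are Noetherian by (2) and (3), so $\pi_*$ preserves coherence, and tameness additionally gives exactness of $\pi_*$. Statement (6) is purely formal, since pullback preserves quasi-coherence and finite type, and on a Noetherian stack the coherent sheaves are exactly the quasi-coherent sheaves of finite type, so $\pi^*$ of a coherent sheaf is again coherent. The main obstacle is the descent in (4): the delicate point is verifying that triviality of the stabilizer action on fibers at every geometric point is genuinely sufficient for $\mathcal{L}^N$ to be a pullback, which rests on the local structure theory of tame stacks together with exactness of invariants, rather than on a pointwise fiber computation alone.
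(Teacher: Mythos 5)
Your proposal is correct, and for the routine parts it matches the paper: statements (2), (3), (5), (6) are handled by exactly the same finite-type and Noetherian bookkeeping the paper uses (the paper cites EGA III 3.2.1 for (5) and Noetherianness of $\mathcal{X}$ for (6)). The two points of divergence are worth noting. For (1), the paper simply observes that $\pi$ is a universal homeomorphism; your cancellation argument ($p$ proper, $\pi$ proper surjective, $\bar{p}$ separated, hence $\bar{p}$ proper) is an equivalent formal route, though to invoke the cancellation lemma cleanly you should also note that $\bar{p}$ is (locally) of finite type, which comes out of the Keel--Mori theorem alongside separatedness. For (4), the crux, the paper does not give an argument at all: it cites \cite[Lemma 2.3.7]{abramovich2011stable}, proved there for cyclic stabilizers, and asserts the argument extends mutatis mutandis. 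What you wrote is essentially that extension made explicit: the character computation $\chi^{|G_x|}=1$ with $|G_x| \mid N$ killing the stabilizer action on fibers of $\mathcal{L}^N$, the definition $L = \pi_*(\mathcal{L}^N)$, and the \'etale-local verification on charts $[\Spec B/G]$ with coarse space $\Spec B^G$, using exactness of invariants (tameness, automatic in characteristic zero) and compatibility of $\pi_*$ with \'etale base change on $X$. You correctly identified that the pointwise fiber computation alone is insufficient and that the local structure theory is what makes the descent work --- in characteristic zero the trivialization on a chart is achieved by averaging a local generator over $G$, which is exactly the mechanism behind the cited lemma. So your proof is a self-contained version of what the paper delegates to a reference; the paper's approach buys brevity, yours buys a verifiable argument in the non-cyclic setting the paper actually needs.
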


\begin{proof}
Statement (1) follows because $\pi: \cX \to X$ is a universal homeomorphism. 
Statement (2) follows because $p$ is of finite type over $\Spec k$.  
Statement (3) follows because $\bar{p}$ is of finite type over $\Spec k$.
A proof of Statement (4) is found in \cite[Lemma 2.3.7]{abramovich2011stable} in the case where all of the stabilizer groups are cyclic, but the same argument extends mutatis mutandis to our general setting. Statement (5) follows because $\pi$ is proper and $X$ is Noetherian (see 
\cite[III, Theorem 3.2.1]{EGA}). Statement (6) follows because $\mathcal{X}$ is Noetherian (see  e.g. \cite[Theorem 16.3.7 (3)]{vakil2017rising}). 
\end{proof}

Recall the following result concerning amplitude with respect to \emph{proper} morphisms (over a Noetherian base). 

\begin{lemma}[Serre's vanishing criterion {\cite[III, Proposiiton 2.6.1]{EGA}}]\label{lem:Serrescheme}
Let $f : X \to \Spec k$ a proper morphism of schemes. Let $L$ be an invertible sheaf over $X$. 
The following conditions are equivalent. 
\begin{enumerate}
\item The sheaf $L$ is $f$-ample. 
\item For each coherent $\mathcal{O}_X$-module $F$, there is an integer $m_0$ such that 
\[
H^q(X, F \otimes L^m) = 0 \hspace{10mm} \text{for $q > 0$ and $m \geqslant m_0$}.
\]
\end{enumerate} 
\end{lemma}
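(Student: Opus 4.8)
The plan is to prove the two implications separately, reducing $(1)\Rightarrow(2)$ to the classical Serre vanishing theorem on projective space and deducing $(2)\Rightarrow(1)$ from the global-generation criterion for ampleness. Throughout I would use that, since $f$ is proper over the field $k$, the scheme $X$ is Noetherian and quasi-compact, and $f$-ampleness of $L$ coincides with ampleness of $L$.

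For $(1)\Rightarrow(2)$, I would first invoke that an ample invertible sheaf on a scheme proper (hence of finite type) over a field has a power $L^n$ that is very ample, giving a closed immersion $\iota:X\hookrightarrow \bP^M_k$ with $\iota^*\cO(1)\simeq L^n$. Given a coherent sheaf $F$, I would split the twists $F\otimes L^m$ according to the residue of $m$ modulo $n$: for each $j\in\{0,\dots,n-1\}$ the pushforward $\iota_*(F\otimes L^j)$ is coherent on $\bP^M_k$, and the classical Serre vanishing theorem supplies an $\ell_0$ with $H^q(\bP^M_k,\iota_*(F\otimes L^j)(\ell))=0$ for all $q>0$ and $\ell\ge\ell_0$. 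Since $\iota$ is a closed immersion, $\iota_*$ is exact and preserves cohomology, and the projection formula identifies $\iota_*(F\otimes L^j)(\ell)$ with $\iota_*(F\otimes L^{j+n\ell})$; hence $H^q(X,F\otimes L^{j+n\ell})=0$ for $q>0$ and $\ell\ge\ell_0$. Taking the maximum over the finitely many residues $j$ produces a single $m_0$ as required.

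For $(2)\Rightarrow(1)$, I would verify ampleness through the criterion that $L$ is ample precisely when, for every coherent sheaf $F$, the twist $F\otimes L^m$ is globally generated for all $m\gg0$. Fix a coherent $F$ and a closed point $x$ with ideal sheaf $\mathcal{I}_x$. Tensoring the exact sequence $0\to \mathcal{I}_xF\to F\to F/\mathcal{I}_xF\to0$ with $L^m$ and applying hypothesis $(2)$ to the coherent sheaf $\mathcal{I}_xF$ yields $H^1(X,\mathcal{I}_xF\otimes L^m)=0$ for $m\ge m_0(x)$, so that $H^0(X,F\otimes L^m)$ surjects onto the fibre $(F\otimes L^m)\otimes\kappa(x)$; by Nakayama the global sections then generate the stalk of $F\otimes L^m$ at $x$ for every $m\ge m_0(x)$.

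The main obstacle is upgrading this pointwise, $x$-dependent statement to a bound uniform in both the point and the exponent. I would proceed as follows. Applying the previous step also to $\cO_X$ shows $L^{m_0}$ is globally generated at $x$; since global generation at a point is an open condition for a coherent sheaf and a tensor product of globally generated sheaves is again globally generated, I would first arrange, on a common neighborhood $W_x$ of $x$, the global generation of the finitely many sheaves $F\otimes L^{m_0},\dots,F\otimes L^{2m_0-1}$ together with $L^{m_0}$, and then bootstrap: writing any $m\ge m_0$ as $m=j+qm_0$ with $m_0\le j\le 2m_0-1$, the identity $F\otimes L^m=(F\otimes L^j)\otimes (L^{m_0})^{\otimes q}$ shows $F\otimes L^m$ is globally generated on $W_x$ for all $m\ge m_0$. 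Finally, quasi-compactness of $X$ lets me cover it by finitely many such $W_{x_1},\dots,W_{x_s}$; setting $N=\max_i m_0(x_i)$, the sheaf $F\otimes L^m$ is generated by its global sections at every point of $X$ for all $m\ge N$, which is exactly global generation. This establishes ampleness and completes the proof. The delicate point is precisely this passage from the point-by-point cohomological vanishing to a bound uniform in $x$ and in $m$.
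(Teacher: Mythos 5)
Your proof is correct, but note that the paper does not actually prove this lemma: it is recalled as background with only a citation to EGA III, Proposition 2.6.1, so there is no internal argument to compare against. What you have supplied is the standard textbook proof (essentially Hartshorne III.5.3 combined with the Cartan--Serre--Grothendieck characterization of ampleness), and it is sound. Your direction $(1)\Rightarrow(2)$ legitimately exploits that the base is $\Spec k$: proper over a field plus ample forces $X$ to be projective, and the splitting of twists into residues modulo $n$, the projection formula, and the exactness of $\iota_*$ (with $R^{>0}\iota_*=0$, so cohomology is preserved) correctly transport classical Serre vanishing on $\bP^M_k$ back to $X$; EGA proves the statement over an arbitrary Noetherian base, but over a point your simpler reduction is valid, and your identification of $f$-ampleness with ampleness is correct since the base is affine. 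Your direction $(2)\Rightarrow(1)$ isolates and resolves the one genuinely delicate issue --- uniformity in both the closed point $x$ and the exponent $m$ --- by exactly the right bootstrap: secure $L^{m_0}$ and the finitely many twists $F\otimes L^{m_0},\dots,F\otimes L^{2m_0-1}$ on a common neighborhood $W_x$, then write $m=j+qm_0$ and use that tensor products of globally generated sheaves are globally generated. Two small elisions remain, both standard and harmless: the bound $m_0(x)$ must be taken large enough to serve simultaneously for $F$ and for $\cO_X$ (a maximum of the two cohomological bounds), and the claim that the neighborhoods $W_x$ attached to closed points cover $X$ requires the observation that every nonempty closed subset of the quasi-compact scheme $X$ contains a point that is closed in $X$, so the complement of $\bigcup_x W_x$ must be empty. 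Neither gap affects the correctness of the argument.
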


Here is an extension to Situation \ref{sit:proper}. 

\begin{lemma}\label{lem:Serrealgebraicspace}
In Situation \ref{sit:proper}, let $L$ be an invertible sheaf over $X$. The following are equivalent. 
\begin{enumerate}
\item The sheaf $L$ is $\bar{p}$-ample. 
\item The algebraic space $X$ is a scheme and for each coherent sheaf $F$ over $X$, there is an integer $m_0$ such that 
\[
H^q(X, F \otimes L^m) = 0 \hspace{10mm} \text{for $q > 0$ and $m \geqslant m_0$}. 
\]
\end{enumerate}
\end{lemma}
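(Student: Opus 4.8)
The plan is to reduce everything to the scheme-theoretic criterion recorded in Lemma~\ref{lem:Serrescheme}, the only genuinely new ingredient being the passage from the algebraic space $X$ to an honest scheme. Throughout I would use that $\bar p : X \to \Spec k$ is proper and that $X$ is Noetherian, both furnished by Lemma~\ref{lem:proper}. The implication $(2) \Rightarrow (1)$ is then immediate: assuming $X$ is a scheme, the vanishing asserted in $(2)$ is verbatim hypothesis $(2)$ of Lemma~\ref{lem:Serrescheme} applied to the proper morphism $\bar p$, so that lemma produces the $\bar p$-ampleness of $L$.

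For $(1) \Rightarrow (2)$ the crux is to show that $X$ is representable by a scheme; the cohomological vanishing will then follow formally. Here I would appeal to the structure theory of proper algebraic spaces: a proper algebraic space over a field that carries an ample invertible sheaf is necessarily a projective scheme (see \cite{stacksproject}). The hypotheses are met, since $\bar p$ is proper by Lemma~\ref{lem:proper}(1), the space $X$ is Noetherian by Lemma~\ref{lem:proper}(3), and $L$ is $\bar p$-ample by assumption. This exhibits $X$ as a closed subscheme of some $\mathbb{P}^n_k$ and, in particular, identifies $L$ as an ample invertible sheaf on the scheme $X$ in the usual sense. With $X$ now a scheme and $\bar p$ proper, I would finish by invoking the implication $(1) \Rightarrow (2)$ of Lemma~\ref{lem:Serrescheme}, which converts the ampleness of $L$ into the claimed vanishing $H^q(X, F \otimes L^m) = 0$ for $q > 0$ and $m \gg 0$.

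The single real obstacle is the step that extracts representability from the mere existence of an ample invertible sheaf: this is precisely where one departs from the category of schemes, and it rests on the fact that ampleness forces projectivity for proper algebraic spaces rather than on any elementary cohomological manipulation. Once $X$ is known to be a scheme, both implications amount to bookkeeping around Serre's criterion, and I would expect the remaining write-up to be short modulo that citation.
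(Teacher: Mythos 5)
Your proposal is correct and follows essentially the same route as the paper: the implication $(2)\Rightarrow(1)$ is read off directly from Lemma~\ref{lem:Serrescheme}, and for $(1)\Rightarrow(2)$ the paper likewise first upgrades $X$ to a scheme via the Stacks Project (Lemma 0D32 of \cite{stacksproject}, which in fact needs only the ample invertible sheaf, not properness) and then applies Lemma~\ref{lem:Serrescheme} again. Your slightly stronger invocation of projectivity is harmless but unnecessary; representability alone suffices.
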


\begin{proof}
The implication (2) implies (1) is immediate from Lemma \ref{lem:Serrescheme}. Assume (1). Then $X$ is a scheme by 
\cite[Lemma 0D32]{stacksproject}. The rest of (2) then follows from Lemma \ref{lem:Serrescheme} again. 
\end{proof}


\begin{lemma}\label{lem:locallyfree}
In Situation \ref{sit:proper}, let $L$ be an invertible sheaf over $X$. Assume $L$ is $\bar{p}$-ample. For each locally free sheaf $\mathcal{E}$ over $\mathcal{X}$, there is an integer $m_0$ satisfying 
\[
H^q(X,\pi_*\mathcal{E} \otimes L^m) = 0 \hspace{10mm} \text{for each $q > 0$ and $m \geqslant m_0$.}
\]
\end{lemma}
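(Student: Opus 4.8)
The plan is to reduce the statement directly to Serre's vanishing criterion on the coarse space $X$, which becomes available as soon as we know that $\pi_*\mathcal{E}$ is a \emph{coherent} sheaf on $X$. The strategy has essentially two moves: first push $\mathcal{E}$ down to $X$ and verify that the result is coherent, and then invoke the ampleness hypothesis on $L$ via the Serre-type vanishing lemma already established for $\bar{p}$-ample sheaves.

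First I would observe that $\mathcal{E}$, being locally free of finite rank over the Noetherian stack $\mathcal{X}$ (Noetherian by Lemma \ref{lem:proper}(2)), is in particular coherent. Applying Lemma \ref{lem:proper}(5), which asserts that the pushforward $\pi_*$ carries coherent sheaves to coherent sheaves, I conclude that $\pi_*\mathcal{E}$ is a coherent $\mathcal{O}_X$-module. This is the only place where the properness of $\pi$ and the Noetherian hypotheses genuinely enter.

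Next, since $L$ is assumed $\bar{p}$-ample, Lemma \ref{lem:Serrealgebraicspace} guarantees that $X$ is in fact a scheme and that the Serre vanishing conclusion holds for every coherent sheaf on $X$. Taking the coherent sheaf $F = \pi_*\mathcal{E}$ in the statement of that lemma produces an integer $m_0$ for which
\[
H^q(X, \pi_*\mathcal{E} \otimes L^m) = 0 \qquad \text{for all } q > 0 \text{ and } m \geqslant m_0,
\]
which is precisely the desired conclusion.

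The argument is short because the two preceding lemmas do all of the substantive work, so there is no serious obstacle here, only the bookkeeping of confirming coherence of the pushforward. If any care is warranted, it lies in ensuring that \emph{locally free} is understood to mean finite rank, so that $\mathcal{E}$ is coherent and Lemma \ref{lem:proper}(5) applies verbatim; for an infinite-rank bundle one would first have to reduce to the coherent case before the Serre vanishing input could be used.
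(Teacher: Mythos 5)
Your proof is correct and follows the same route as the paper's: coherence of $\pi_*\mathcal{E}$ via Lemma \ref{lem:proper}(5), then Serre vanishing via Lemma \ref{lem:Serrealgebraicspace} applied to $F = \pi_*\mathcal{E}$. Your closing caveat about finite rank is a reasonable precaution but not an issue in the paper's conventions, where locally free sheaves are of finite rank (Situation \ref{sit:embed} and Definition \ref{def:vectorbundle} presuppose this).
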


\begin{proof}
The sheaf $\mathcal{E}$ is coherent over $\mathcal{X}$, 
and so the sheaf $\pi_*\mathcal{E}$ is coherent over $X$ 
by Lemma \ref{lem:proper}. It follows that the sheaves $\pi_*\mathcal{E} \otimes L^m$ are coherent over $X$. 
The statement follows from Lemma \ref{lem:Serrealgebraicspace}. 
\end{proof}

\section{Twisted Grassmannians}\label{sec:TwistedGrassmannians}

Let $r,n$ be positive integers. Let $\mu = (\mu_0, \ldots, \mu_n)$ be a sequence of nonnegative integer multiplicities with at least one multiplicity strictly positive. Consider the functor which associates to any scheme $T$ the additive group  
\begin{align}\label{eqn:A(r,mu)(T)}
\bigoplus_{i=0}^n \text{Hom}_{k}(\Gamma(T,\mathcal{O}_T)^{\mu_i}, \Sym^i(\Gamma(T,\mathcal{O}_T)^r)).
\end{align}
This is representable by an affine scheme which we denote by $\mathbb{A}(r, \mu)$. Moreover, the affine scheme $\mathbb{A}(r,\mu)$ is an additive group scheme over $k$. 

The affine group scheme $GL_{r}$ naturally acts on $\mathbb{A}(r,\mu)$ over $k$ via a morphism of schemes 
\[
\sigma(r,\mu) : GL_r \times \mathbb{A}(r,\mu) \longrightarrow \mathbb{A}(r,\mu)
\]
satsfiying the property that for each scheme $T$, the corresponding map of sets 
\[
GL_r(T) \times \mathbb{A}(r,\mu)(T) \longrightarrow \mathbb{A}(r,\mu)(T) 
\]
is induced by viewing each 
\[
\text{Hom}_{k}(\Gamma(T,\mathcal{O}_T)^{\mu_i}, \Sym^i(\Gamma(T,\mathcal{O}_T)^r)) \simeq ( \Gamma(T, \mathcal{O}_T)^{\mu_i})^\vee \otimes_{k} \Sym^i(\Gamma(T,\mathcal{O}_T)^r )
\]
as a $GL_r(T)$-set via the trivial action of $GL_r(T)$ on $\Gamma(T, \mathcal{O}_T)^{\mu_i}$ and the action of $GL_r(T)$ on $\Sym^i(\Gamma(T,\mathcal{O}_T)^r)$ induced by the fundamental action of $GL_r(T)$ on $\Gamma(T,\mathcal{O}_T)^r$.

In addition, for each nonnegative integer $m$, there is an action 
\[
\sigma(r,\mu, m) : GL_r \times \mathbb{A}(r,\mu) \longrightarrow \mathbb{A}(r,\mu)
\] 
which is obtained from $\sigma(r,\mu)$ by twisting by $(\det)^{\otimes m}$. 

Let $\mathcal{A}(r,\mu,m)$ denote the resulting quotient stack 
\begin{align}\label{eqn:A-r-mu}
\mathcal{A}(r,\mu,m) = [\mathbb{A}(r,\mu)/GL_r]
\end{align}
over $k$.  

The determinant morphism $\det : GL_r \to \mathbb{G}_m$ determines a $GL_r$-linearization of the structure sheaf $\mathcal{O}_{\mathbb{A}(r,\mu)}$ via Example \ref{example:linearizationrho}. Let $\mathcal{L}_0$ denote the corresponding invertible sheaf over $\mathcal{A}(r,\mu,m)$. Denote by $\mathcal{A}(r,\mu,m)_{\mathcal{L}_0}^{\text{ss}}$ the open substack corresponding to the locus of semistable geometric points as in Definition \ref{def:stable}. 

\begin{definition}\label{def:cG(r,mu)}
Define the \emph{twisted Grassmann stack} associated to the data $(r,\mu,m)$ to be the open substack 
\[
\mathcal{G}(r,\mu,m) = \mathcal{A}(r,\mu,m)_{\mathcal{L}_0}^{\text{ss}}
\]
of $\mathcal{A}(r,\mu,m)$ over $k$. 
\end{definition}

\begin{lemma}
The twisted Grassmann stack $p : \mathcal{G}(r,\mu,m) \to \Spec k$ admits a good moduli space 
\[
\pi : \mathcal{G}(r,\mu,m) \longrightarrow G(r,\mu,m)
\]
where $G(r,\mu,m)$ is an open subscheme of $\textnormal{Proj} \bigoplus_{d \geqslant 0} H^0(\mathcal{G}(r,\mu,m), \mathcal{L}^d)^{GL_r}$. 
\end{lemma}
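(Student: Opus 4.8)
The plan is to recognize this as a direct application of Lemma \ref{lem:alperquotient}, which was stated precisely for quotient stacks of the form $[Y/G]$ equipped with a $G$-linearized invertible sheaf, once we verify that the twisted Grassmann stack fits into that framework. First I would observe that $\mathcal{G}(r,\mu,m)$ is by definition the semistable locus $\mathcal{A}(r,\mu,m)_{\mathcal{L}_0}^{\text{ss}}$ inside the quotient stack $[\mathbb{A}(r,\mu)/GL_r]$, and that $\mathbb{A}(r,\mu)$ is an affine (hence quasi-compact) scheme over $k$ on which $GL_r$ acts with the linearization $\mathcal{L}_0$ coming from the determinant character as in Example \ref{example:linearizationrho}. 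Since $k$ has characteristic zero, $GL_r$ is linearly reductive, so the hypotheses of Example \ref{example:quotient} are met, and consequently the hypotheses of Lemma \ref{lem:alperquotient} apply with $Y = \mathbb{A}(r,\mu)$, $G = GL_r$, and $L$ the line bundle associated to the determinant character.

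With this identification in place, the existence of the good moduli space $\pi : \mathcal{G}(r,\mu,m) \to G(r,\mu,m)$ together with the description of $G(r,\mu,m)$ as an open subscheme of $\textnormal{Proj} \bigoplus_{d \geqslant 0} H^0(\mathcal{G}(r,\mu,m), \mathcal{L}^d)^{GL_r}$ is exactly the content of Lemma \ref{lem:alperquotient}(1). The only translation needed is to match the notation: the ring $(p_*\mathcal{L}^d)^{GL_r}$ appearing in Lemma \ref{lem:alperquotient}(1) coincides with $H^0(\mathcal{G}(r,\mu,m), \mathcal{L}^d)^{GL_r}$ here, since global sections of the invertible sheaf on the affine scheme $\mathbb{A}(r,\mu)$ are computed by $p_*$ and the $GL_r$-invariants of the sections over the full space agree with the sections over the semistable locus that descend to the quotient.

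I expect the main (and essentially only) obstacle to be a bookkeeping one rather than a conceptual one: namely confirming that the invertible sheaf $\mathcal{L}$ used in the statement of the $\textnormal{Proj}$ is the same $\mathcal{L}_0$ used to cut out the semistable locus, and that the grading conventions agree. Once the dictionary between the determinant-character linearization, the sheaf $\mathcal{L}_0$, and the graded ring is fixed, the result is immediate. I would therefore structure the proof as a short paragraph invoking Example \ref{example:quotient} to place us in the setting of Lemma \ref{lem:alperquotient}, followed by a single sentence applying part (1) of that lemma and noting the identification of the relevant section rings.

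\begin{proof}
By construction, $\mathcal{G}(r,\mu,m)$ is the open substack $\mathcal{A}(r,\mu,m)_{\mathcal{L}_0}^{\text{ss}}$ of the quotient stack $\mathcal{A}(r,\mu,m) = [\mathbb{A}(r,\mu)/GL_r]$, where $\mathbb{A}(r,\mu)$ is a quasi-compact (indeed affine) scheme over $k$ and $\mathcal{L}_0$ is the invertible sheaf arising from the $GL_r$-linearization of $\mathcal{O}_{\mathbb{A}(r,\mu)}$ determined by the determinant character via Example \ref{example:linearizationrho}. Since $k$ has characteristic zero, $GL_r$ is linearly reductive, so we are precisely in the situation of Example \ref{example:quotient} with $Y = \mathbb{A}(r,\mu)$, $G = GL_r$, and $L$ the linearization attached to the determinant character. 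Writing $\mathcal{L} = \mathcal{L}_0$ for the induced invertible sheaf on the quotient stack, Lemma \ref{lem:alperquotient}(1) yields a good moduli space
\[
\pi : \mathcal{G}(r,\mu,m) = \mathcal{A}(r,\mu,m)_{\mathcal{L}_0}^{\text{ss}} \longrightarrow G(r,\mu,m)
\]
with $G(r,\mu,m)$ an open subscheme of $\textnormal{Proj}_k \bigoplus_{d \geqslant 0} (p_*\mathcal{L}^d)^{GL_r}$. Because $\mathbb{A}(r,\mu)$ is affine, the invariants $(p_*\mathcal{L}^d)^{GL_r}$ are identified with $H^0(\mathcal{G}(r,\mu,m), \mathcal{L}^d)^{GL_r}$, which completes the proof.
\end{proof}
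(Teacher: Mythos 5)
Your proposal is correct and follows exactly the same route as the paper, whose entire proof is the single citation ``See Lemma \ref{lem:alperquotient} (a)''; you simply spell out the verification (affine hence quasi-compact $\mathbb{A}(r,\mu)$, linear reductivity of $GL_r$ in characteristic zero, the determinant linearization via Example \ref{example:linearizationrho} and Example \ref{example:quotient}) that the paper leaves implicit. Your added care in matching $(p_*\mathcal{L}^d)^{GL_r}$ with the section ring in the statement is a reasonable elaboration, not a different method.
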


\begin{proof}
See Lemma \ref{lem:alperquotient} (a). 
\end{proof}

\begin{example}\label{eg:weightedprojectivespace}
For example, if $r = 1$, then $\mathcal{G}(1,\mu,m)$ is the weighted projective stack 
\[
\mathcal{P}(\overbrace{m, \ldots,m}^{\mu_0}, \overbrace{m+1, \ldots, m+1}^{\mu_1}, \ldots, \overbrace{m+n, \ldots, m+n}^{\mu_n})
\]
as in \cite{abramovich2011stable}. The good moduli space $G(1,\mu,m)$ is the weighted projective space 
\[
G(1,\mu,m) = \mathbb{P}(\overbrace{m, \ldots, m}^{\mu_0}, \overbrace{m+1, \ldots, m+1}^{\mu_1}, \ldots, \overbrace{m+n, \ldots, m+n}^{\mu_n}).
\]
\end{example}

\begin{example}
As another example, suppose $n = 1$, $\mu_0 = 0$, and $r < \mu_1$.  Then the affine scheme $\mathbb{A}(r,\mu)$ is the one whose $k$-valued points can be identified with the vector space $\text{Hom}_k(k^{\mu_1}, k^r)$. For $m = 0$, at the level of $k$-points, the action of $GL_r$ is just post-composition, and the semistable locus consists of those linear maps with full rank. It follows that the quotient stack $\mathcal{G}(r, \mu, 0)$ can be identified with the Grassmann variety of $r$-planes in $\mathbb{A}^{\mu_1}$ (see e.g. \cite[Example 2.14]{hoskins2014stratifications}). 
\end{example}

\section{Embedding statement}\label{sec:embeddingstatement}

In this section, we consider the following situation. 

\begin{situation}\label{sit:embed}
In Situation \ref{sit:proper}, let $\mathcal{V}$ be a locally free sheaf of rank $r$ over $\mathcal{X}$. Make the following assumptions. 
\begin{enumerate}
\item The sheaf $\mathcal{V}$ is $\pi$-ample. 
\item There is a $\bar{p}$-ample invertible sheaf $L$ over $X$ such that $(\det \mathcal{V})^N \simeq \pi^*L$, where $N$ is the smallest positive integer which is a multiple of the order of each stabilizer group $G_x$ for $x : \Spec k \to \mathcal{X}$. 
\end{enumerate} 
\end{situation}
Then, because $\mathcal{V}$ is $\pi$-ample, the frame bundle $\Fr(\mathcal{V})$ is representable by an algebraic space (Lemma \ref{lem:framerepresentable}). In fact, because $L$ pulls back to an ample sheaf over $\Fr(\mathcal{V})$, the frame bundle is representable by a scheme (according to  \cite[Lemma 69.14.12]{stacksproject}). Call this scheme $Y$. It enjoys a natural morphism 
\[
q : Y \longrightarrow \mathcal{X}
\]
to $\mathcal{X}$.

For each suitably large positive integer $m$, there is a morphism of schemes 
\begin{align}\label{eqn:f-m}
f_m : Y \longrightarrow \mathbb{A}(r,\mu(m)),
\end{align}
where $\mathbb{A}(r,\mu(m))$ is a certain affine scheme as desecribed as in \eqref{eqn:A(r,mu)(T)}. 
Indeed, the sheaf 
\[
\bigoplus_{i=0}^N \Sym^i \mathcal{V}
\] 
is generating (see Lemma \ref{lem:generating}).  For each $0 \leqslant i \leqslant N$, define the sheaf
\[
V_i(m) := p_*(\cX, \Sym^i \mathcal{V} \otimes \pi^*L^{m}) =  \bar{p}_* \pi_*(\Sym^i\mathcal{V} \otimes L^m) 
\]
over $\Spec k$, where the second equality follows from Lemma \ref{lem:projection}. The sheaf $V_i(m)$ corresponds
to a free $k$-module of rank $\mu(m)$, or equivalently a $\mu(m)$-dimensional vector space over $k$, for some
non-negative integer $\mu(m)$. 
There is an isomorphism of affine spaces over $k$ 
\[
\bV \left( \bigoplus_{i=0}^N \sheafhom(V_i(m), \Sym^{i}\cO^r_{\Spec k})\right) \simeq \mathbb{A}(r, \mu(m))
\]
where $\mu(m) = (\mu_0(m), \ldots, \mu_N(m))$ and where $\mathbb{A}(r, \mu(m))$ is defined as in \eqref{eqn:A(r,mu)(T)}. Then using Lemma \ref{lem:frame-i-m}, we obtain the morphism \eqref{eqn:f-m}.  

According to Lemma \ref{lem:Hom-i-m}, the morphism is $GL_r$-equivariant, and thus it descends to a morphism of stacks 
\begin{align}\label{eqn:F_m}
F_m : \mathcal{X} \longrightarrow \mathcal{A}(r,\mu(m))
\end{align}
over $\Spec k$, where $\mathcal{A}(r,\mu(m)) = [\mathbb{A}(r,\mu(m))/GL_{r}]$ as in \eqref{eqn:A-r-mu}. In summary, we have, for each $m$, a Cartesian square of stacks 
\[
\xymatrix{
	Y \ar[rr]^{\!\!\!\!\!\!\!\!\!\!\!\!\!\!\!\!\!\!f_m} \ar[d]_q & & \mathbb{A}(r, \mu(m)) \ar[d] \\
	\mathcal{X} \ar[rr]^{\!\!\!\!\!\!\!\!\!\!\!\!\!\!\!\!\!\!F_m}& & \mathcal{A}(r,\mu(m))
}
\] 
over $\Spec k$. 

The next section will be devoted to proving that the morphism \eqref{eqn:f-m} is an embedding of schemes for $m$ sufficiently large, as stated in the following lemma. 

\begin{lemma}\label{lemma:embedding}
In Situation \ref{sit:embed}, the morphism \eqref{eqn:f-m} of schemes is an embedding for $m$ sufficiently large. 
\end{lemma}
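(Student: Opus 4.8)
The plan is to adapt the classical criterion for very ampleness \cite[Proposition II.7.3]{hartshorne2013algebraic}: I would show that for $m \gg 0$ the morphism $f_m$ is injective on geometric points and injective on Zariski tangent spaces, and then promote this to the assertion that $f_m$ is an embedding. The key structural observation is that the frame bundle $Y = \Fr(\cV)$ carries a tautological isomorphism $q^*\cV \simeq \cO_Y^r$, under which
\[
q^*(\Sym^i \cV \otimes \pi^* L^m) \simeq \cO_Y^{\,\oplus \binom{r+i-1}{i}}, \qquad \mathcal{M} := q^*\pi^* L \simeq (q^*\det\cV)^N \simeq \cO_Y.
\]
Thus the pullback of $L$ to $Y$ is the (trivial) structure sheaf, and the amplitude noted above is precisely the statement that $Y$ is quasi-affine. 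Consequently $f_m$ is described by a finite collection of global regular functions on $Y$, namely the frame-coordinates of the sections in $V_i(m)$, and the whole question becomes one about how many such functions are needed to separate points and tangent vectors. Note that $Y$ is \emph{not} proper over $k$ (since $GL_r$ is affine), so the classical passage from ``separates points and tangent vectors'' to ``closed immersion'' is not available verbatim; this is the point that will require extra input.

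For the ``$m \gg 0$'' ingredients I would invoke Serre vanishing in the form of Lemmas \ref{lem:locallyfree} and \ref{lem:Serrealgebraicspace}. Since $L$ is $\bar p$-ample and $X$ is projective, for $m \gg 0$ the sheaf $L^m$ is very ample on $X$, each $\pi_*(\Sym^i\cV \otimes L^m)$ is globally generated, and the first-order (jet) evaluation maps are surjective; there are only finitely many indices $0 \le i \le N$, so a single $m$ works for all of them simultaneously. Combined with the fact that $\bigoplus_{i=0}^N \Sym^i\cV$ is generating (Lemma \ref{lem:generating}), this supplies the sections used below.

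Separation of points proceeds by a two-case analysis for distinct geometric points $y_1 = (x_1,u_1)$ and $y_2 = (x_2,u_2)$ of $Y$. If $\pi(x_1) \ne \pi(x_2)$ in $X$, the $i=0$ component of $f_m$ records evaluation of sections of $V_0(m) = \Gamma(X, L^m)$, and very ampleness of $L^m$ separates $x_1$ from $x_2$. If $\pi(x_1) = \pi(x_2)$ — so that $x_1 \simeq x_2$ by the coarse-moduli bijection $[\cX(k)] \to X(k)$ and $y_1, y_2$ lie in one fibre of $Y \to \cX$, differing only by their frames — I would use the components $1 \le i \le N$: since the $\Sym^i\cV \otimes \pi^* L^m$ are globally generated at $x_j$ and $\bigoplus_i \Sym^i\cV$ is generating, the frame-coordinates of the generating sections determine the frame, so $u_1 \ne u_2$ is detected. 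Separation of tangent vectors runs along the same division: a tangent vector with nonzero image in $X$ is separated by the very ample $L^m$, while a vertical tangent vector, tangent to a fibre of $Y \to \cX$, is separated using the surjectivity of the jet maps together with the generating property.

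The main obstacle is twofold, and both parts live in the second (``vertical'', frame-direction) case. First, the generating property of $\bigoplus_{i=0}^N \Sym^i\cV$ must be converted into the honest statement that finitely many regular functions on the scheme $Y$ reconstruct the frame and its first-order deformations; this is where the Burnside/Steinberg input underlying Lemma \ref{lem:generating} does the real work. Second, the separation statements only make $f_m$ a monomorphism (injective plus unramified), and I must upgrade this to a genuine locally closed immersion despite $Y$ being non-proper. I would handle this by exploiting the projectivity of the coarse space: the $i=0$ factor realizes the image over the closed subscheme $X \hookrightarrow \mathbb{P}(V_0(m)^\vee)$, so closedness in the base directions comes from properness of $X$, while quasi-affineness of the fibres controls the remaining frame directions. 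Making this last combination precise — pinning down that the monomorphism $f_m$ is in fact an embedding — is the step I expect to demand the most care.
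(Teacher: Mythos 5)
Your overall skeleton---two-case separation of closed points, separation in the cotangent directions, and a uniform choice of $m$ via properness of $\cX$---matches the paper's actual proof (Lemmas \ref{lem:injective1} and \ref{lem:injectiveonclosedpoints}, plus the unlabeled stalk-surjectivity lemmas), and your use of Serre vanishing with ideal sheaves for the $m \gg 0$ inputs is the same mechanism the paper uses. But the two ``obstacles'' you defer are precisely the mathematical core of the proof, and the tool you propose for the first one is the wrong one. Reconstructing the frame from the components $1 \leqslant i \leqslant N$ is \emph{not} a consequence of the generating property of $\bigoplus_{i=0}^N \Sym^i \cV$ (Burnside/Steinberg, Lemma \ref{lem:generating}): that property says every irreducible $G_x$-representation occurs in the fibre of the sheaf, which is indeed what the paper uses for the tangent-direction step, but it does not yield that finitely many invariant functions realize the fibre $\textnormal{Isom}_k(\cV_x,k^r)/G_x$ as a locally closed subscheme of $\bA_x(m)$. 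What the paper uses instead (Lemma \ref{lem:fiberimmersion}) is Noether's degree bound: the invariant ring $k[p_{ij}]^{G_x}$ is generated by homogeneous invariants $t_{\ell j}$ of degrees $d_\ell \leqslant |G_x|$, and $|G_x|$ divides $N$---this, not Burnside, is why the range $0 \leqslant i \leqslant N$ of symmetric powers suffices. Moreover the quotient fibre is $\Spec k[t_{\ell j}, 1/D]$ with $D = (\det p_{ij})^a$, and the coordinates $t_{\ell j}$ are recovered from the image only after inverting the $i=0$ coordinate $z^0$ coming from the determinant twist: each $t_{\ell j}$ is a linear combination of pullbacks of ratios $\bigl(D^{bm} \otimes e^{\beta(j,\ell)} \otimes t^{\alpha(\ell)}\bigr)/z^0$. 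This inversion trick---the whole point of twisting by $(\det \rho)^{mN}$, which forces the image into the locus $\{z^0 \neq 0\}$ (the same locus used in Theorem \ref{theorem:embed} for semistability)---is absent from your plan; without it you only know the fibre map is an injective map of affine schemes, not an immersion.

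Your second obstacle is also resolved differently from what you sketch: the paper never argues topologically via properness of $X$ in ``base directions'' plus quasi-affineness of fibres. It upgrades the monomorphism scheme-theoretically: for each closed point $y$ it proves surjectivity of $f_{m,y}^\sharp$ on local rings by decomposing $\mathfrak{m}_y/\mathfrak{m}_y^2$ into $G_x$-irreducibles (linear reductivity), using $\pi$-ampleness to find $i_\ell$ with $(W_\ell \otimes (\det \cV_x)^{mN} \otimes \Sym^{i_\ell}\cV_x)^{G_x} \neq 0$, and killing the obstruction with the vanishing $H^1(\cX, \mathcal{J}_x^2 \otimes (\det \cV)^{mN} \otimes \Sym^i \cV) = 0$; this cotangent surjectivity, together with injectivity on closed points and the fibrewise immersion, assembles into the embedding with no compactness of $Y$ ever invoked. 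So while your outline is compatible with the paper's architecture, the proposal as written has a genuine gap exactly where you predicted it would: filling it requires Noether's finite generation of invariants in degrees dividing into $N$ and the $z^0$-inversion argument, neither of which follows from Lemma \ref{lem:generating} or from jet-map surjectivity.
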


Assuming Lemma \ref{lemma:embedding} for now, we obtain the following main result. 

\begin{theorem}\label{theorem:embed}
In Situation \ref{sit:embed}, for $m$ sufficiently large, the morphism \eqref{eqn:F_m} of stacks factors through the semistable locus and thus determines an embedding of stacks 
\[
\mathcal{X} \longrightarrow \mathcal{G}(r, \mu(m)).
\]
\end{theorem}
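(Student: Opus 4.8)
The plan is to deduce the statement about stacks from the already-established statement about schemes (Lemma \ref{lemma:embedding}) by exploiting the Cartesian square relating the frame bundle $Y$ to $\cX$ via the $GL_r$-quotient presentation. First I would fix $m$ large enough that Lemma \ref{lemma:embedding} applies, so that $f_m : Y \to \bA(r,\mu(m))$ is an embedding of schemes. The key structural input is that $\cX \simeq [Y/GL_r]$ (the isomorphism $[\Fr(\cV)/GL_r] \xrightarrow{\sim} \cX$ established earlier, using that $\cV$ is $\pi$-ample), together with the presentation $\mathcal{A}(r,\mu(m)) = [\bA(r,\mu(m))/GL_r]$. Since $f_m$ is $GL_r$-equivariant and descends to $F_m$, the diagram exhibits $F_m$ as the morphism of quotient stacks induced by the equivariant morphism $f_m$ of the total spaces, with the same group $GL_r$ acting on both sides.

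The first substantive step is to check that $F_m$ factors through the semistable locus $\mathcal{A}(r,\mu(m))^{\mathrm{ss}}_{\mathcal{L}_0} = \cG(r,\mu(m))$. Here I would argue at the level of geometric points: a geometric point $x : \Spec k \to \cX$ lifts to a point $y \in Y$, and its image $f_m(y) \in \bA(r,\mu(m))$ must be shown to be semistable with respect to the determinant character. This should follow from the $\pi$-ampleness of $\cV$ and the generating property of $\bigoplus_{i=0}^N \Sym^i\cV$ (Lemma \ref{lem:generating}): the generating property guarantees that the evaluation maps $V_i(m) \to \Sym^i(\cO^r)$ assembled into $f_m(y)$ are jointly nonzero in the appropriate graded sense, and the GIT semistability criterion for the determinant character (Example \ref{example:quotient}, Lemma \ref{lem:alperquotient}) translates precisely into the non-vanishing of an invariant section. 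The faithfulness of the stabilizer action on fibers of $\cV$ is what prevents the image from landing in the unstable locus. I expect this is where one invokes that $(\det\cV)^N \simeq \pi^*L$ with $L$ ample, matching the determinant linearization $\mathcal{L}_0$ with the ample sheaf used to cut out semistability.

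Once factorization through $\cG(r,\mu(m))$ is established, the restricted morphism sits in a Cartesian square
\[
\xymatrix{
Y \ar[r]^{\!\!\!\!\!\!\!\!f_m} \ar[d]_q & \bA(r,\mu(m))^{\mathrm{ss}} \ar[d] \\
\cX \ar[r]^{\!\!\!\!\!\!\!\!F_m} & \cG(r,\mu(m))
}
\]
in which both vertical maps are $GL_r$-torsors. The final step is to conclude that $F_m$ is an embedding of stacks from the fact that $f_m$ is an embedding of schemes. The standard principle is that a morphism of quotient stacks $[Y/G] \to [Z/G]$ induced by a $G$-equivariant morphism $Y \to Z$ is a (locally closed, resp.\ closed) immersion whenever the underlying equivariant morphism is, because immersions can be checked after the faithfully flat base change along the torsor $Z \to [Z/G]$, and this base change recovers exactly $f_m$. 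Concretely, the square above is Cartesian and the right vertical map is an fppf (indeed smooth) cover, so the property of $F_m$ being an immersion is fppf-local on the target and follows from the corresponding property of $f_m$.

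The main obstacle I anticipate is the semistability verification in the middle step: translating the generating/$\pi$-ampleness hypotheses into the precise GIT-theoretic statement that every image point is determinant-semistable requires care in identifying the invariant sections of $\mathcal{L}_0^d$ on $\mathcal{A}(r,\mu(m))$ with the data carried by $f_m(y)$, and in confirming that the cohomological-affineness condition in Definition \ref{def:stable}(1) holds on the relevant basic open. The descent of the immersion property, by contrast, is formal once the target has been cut down to the semistable locus and one knows both quotients are presented by the \emph{same} group acting compatibly through $f_m$.
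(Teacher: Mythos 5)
Your overall architecture coincides with the paper's: by Example \ref{example:quotient} one reduces to showing that $f_m$ lands in the GIT-semistable locus of $\bA(r,\mu(m))$, and the descent of the immersion property from $f_m$ to $F_m$ along the two $GL_r$-torsors is formal, exactly as you say (the paper treats it implicitly). But the one substantive step --- the semistability verification, which you yourself flag as the anticipated obstacle --- is where your proposal has a genuine gap. The mechanism you suggest, namely that the generating property makes the components of $f_m(y)$ ``jointly nonzero'' and that this translates into non-vanishing of an invariant section, is insufficient when $r \geqslant 2$: the coordinates on the $i$-th summand transform under $\Sym^i\rho \otimes (\det\rho)^{mN}$ and are \emph{not} semi-invariants of pure determinant weight. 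Concretely, for $r=2$ a nonzero point supported only in the $i=1$ summand $\Hom_k(k^{\mu_1}, k^2)$ whose $2\times 2$ minors all vanish (a rank-one homomorphism) is unstable even though it is nonzero: any semi-invariant of positive determinant weight restricts on that summand to an $SL_2$-invariant with no constant term, hence a polynomial in the minors, which vanishes there. Faithfulness of the stabilizer action on fibers of $\cV$ plays no role in semistability; it enters only in the representability of $\Fr(\cV)$ and the fiberwise immersion statements of Section \ref{sec:embeddingproof}.

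The paper's actual mechanism, which you partially guessed when invoking $(\det\cV)^N \simeq \pi^*L$, isolates the $i=0$ summand. Writing $f_m = (f_{m,0},\dots,f_{m,N})$, the zeroth component maps to $\bV(\sheafhom(V_0(m),\cO_{\Spec k})) \simeq \bA^\ell$ where $V_0(m) = p_*((\det\cV)^{mN})$, and on this summand $GL_r$ acts purely through $(\det\rho)^{mN}$; hence the \emph{linear} coordinates $z_1,\dots,z_\ell$ are themselves invariant sections of $\mathcal{L}_0^{mN}$. Since $(\det\cV)^{mN} \simeq \pi^*L^m$ with $L$ ample on the projective coarse space, for $m$ large there is a global section of $(\det\cV)^{mN}$ non-vanishing at the image $x$ of $y$, so $f_{m,0}(y) \neq 0$ and some $z_j$ satisfies $z_j(f_m(y)) \neq 0$; the basic open $\bA(r,\mu(m))_{z_j}$ is affine, so $f_m(y)$ is semistable by Definition \ref{def:stable}(1). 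Note that the generating property of $\bigoplus_{i=0}^N \Sym^i\cV$ is not what drives this step --- it is used elsewhere, to ensure $f_m$ can be an embedding at all --- whereas semistability comes entirely from the zeroth graded piece together with the ampleness hypothesis in Situation \ref{sit:embed}(2).
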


\begin{proof}
According to Example \ref{example:quotient}, it suffices to show that the morphism \eqref{eqn:f-m} factors through the semistable locus.  Because there is a $\bar{p}$-ample invertible sheaf $L$ over $X$ such that $(\det \mathcal{V})^N \simeq \pi^*L$ we can guarantee that, for $m$ suitably large, the pushforward 
\[
V_0(m) = p_*((\det \mathcal{V})^{mN})
\]
is nontrivial, say of rank $\ell$. Decompose the morphism $f_m$ into 
\[
f_m = (f_{m,0}, \ldots, f_{m,N})
\]
where 
\[
f_{m,i} : Y \longrightarrow \mathbb{V}(\sheafhom(V_i(m), \Sym^i\mathcal{O}_{\Spec k})).
\]
In particular, we may identify $f_{m,0}$ with 
\[
f_{m,0} : Y \longrightarrow \mathbb{A}^{\ell}.
\] 
The group scheme $GL_r$ acts on $\mathbb{A}^{\ell}$ via multiplication by $(\det \rho)^{mN}$, where $\rho$ denotes the fundamental representation of $GL_r$. If $z_1, \ldots, z_{\ell}$ denote coordinates on $\mathbb{A}^{\ell}$, then each describes a $GL_r$-equivariant section of the equivariant line bundle described by the structure sheaf together with the $GL_r$-linearization determined by the determinant character.  

Let $y$ be a point of $Y$. Let $x$ denote the image of $y$ under the quotient morphism $q : Y \to \mathcal{X}$. There is a section $s$ of $(\det \mathcal{V})^{mN}$ such that $s(x) \ne 0$. As a result, the image $f_{m,0}(y)$ is nonzero. There is a coordinate $z_j$ such that $z(f_{m,0}(y)) \ne 0$. The subset $\mathbb{A}(r,\mu(m))_{z_j}$ is affine. Hence $f_m(y)$ is semistable in $\mathbb{A}(r,\mu(m))$. 
\end{proof}

\section{Embedding proof}\label{sec:embeddingproof}

This section is devoted to proving Lemma \ref{lemma:embedding}. Throughout, we assume we are in Situation \ref{sit:embed}. 

Let us set up some convenient notation. Let $x : \Spec k \to \mathcal{X}$ be a closed point of $\mathcal{X}$ with stabilizer group $G_x$. Then $\mathcal{V}_x$ is a faithful representation of $G_x$, and, as a result, $G_x$ acts on the \emph{right} on the $k$-module
\[
\text{Hom}_k(\mathcal{V}_x, k^r).
\]
On the other hand, this $k$-module also enjoys a \emph{left} action by the group scheme $GL_r$ described by post-composition. 

At the same time, the point $x$ determines a Cartesian square  of stacks
\[
\xymatrix{ \text{Isom}_k(\mathcal{V}_x, k^r)/G_x \ar[r] \ar[d] & Y\ar[d] \\
BG_x \ar[r] & \mathcal{X}}.
\]
Let $\mathcal{J}_x$ denote the ideal sheaf of $\mathcal{O}_{\mathcal{X}}$ determined by $BG_x$. For any locally free sheaf $\mathcal{E}$ over $\mathcal{X}$, there is a long exact sequence in cohomology, a portion of which may be identified with 
\[
H^0(\mathcal{X}, \mathcal{E}) \longrightarrow H^0(BG_x, \mathcal{E}_x) \longrightarrow H^1(\mathcal{X}, \mathcal{J}_x \otimes \mathcal{E}). 
\] 

\begin{lemma}
There is a positive integer $m_0$ such that for each geometric point $x : \Spec k \to \mathcal{X}$, we have  
\[
H^1(\mathcal{X}, \mathcal{J}_x \otimes (\det \mathcal{V})^{mN} \otimes \Sym^i \mathcal{V}) = 0 \hspace{10mm} 0 \leqslant i \leqslant N \; \text{and} \; m \geqslant m_0.
\]
\end{lemma}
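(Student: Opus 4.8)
The plan is to transfer the entire computation from $\cX$ to its coarse moduli space $X$, where the point $x$ enters only through its image $\pi(x)$; the desired uniformity in $x$ then becomes the standard uniformity present in Serre vanishing and global generation on the projective scheme $X$.

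First I would use the hypothesis $(\det\cV)^N \simeq \pi^*L$ from Situation \ref{sit:embed} to rewrite the sheaf as $\mathcal{J}_x \otimes \pi^*L^m \otimes \Sym^i\cV$. Since $\cX$ is tame (Situation \ref{sit:proper}), Lemma \ref{lem:H-bullet} gives $H^1(\cX,-)\simeq H^1(X,\pi_*(-))$, and the projection formula (Lemma \ref{lem:projection}) applied to the pulled-back factor $\pi^*L^m$ yields
\[
\pi_*\big(\mathcal{J}_x \otimes \pi^*L^m \otimes \Sym^i\cV\big) \simeq L^m \otimes \pi_*\big(\mathcal{J}_x \otimes \Sym^i\cV\big).
\]
Applying the exact functor $\pi_*$ (tameness) to the structure sequence $0 \to \mathcal{J}_x \otimes \Sym^i\cV \to \Sym^i\cV \to (\Sym^i\cV)|_{BG_x} \to 0$, I would identify $\pi_*(\mathcal{J}_x \otimes \Sym^i\cV)$ with the kernel $K_{x,i} := \ker\big(\pi_*\Sym^i\cV \to Q_{x,i}\big)$, where $Q_{x,i} := \pi_*\big((\Sym^i\cV)|_{BG_x}\big)$ is a skyscraper supported at the single point $\pi(x)\in X$ with value the invariants $(\Sym^i\cV)_x^{G_x}$ (using $\pi_*\cO_{BG_x}=k_{\pi(x)}$). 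Writing $F_i := \pi_*\Sym^i\cV$, a fixed coherent sheaf on $X$ independent of $x$, the problem reduces to showing $H^1(X, K_{x,i}\otimes L^m)=0$ uniformly in $x$.

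Next I would run the long exact sequence of $0 \to K_{x,i}\otimes L^m \to F_i\otimes L^m \to Q_{x,i}\otimes L^m \to 0$. Since $L$ is $\bar p$-ample and $F_i$ is coherent, Serre vanishing (Lemma \ref{lem:locallyfree}) gives $H^1(X, F_i\otimes L^m)=0$ for all $m\geqslant m_0$ with $m_0$ independent of $x$; hence $H^1(X, K_{x,i}\otimes L^m)$ is exactly the cokernel of the evaluation map $H^0(X, F_i\otimes L^m) \to H^0(X, Q_{x,i}\otimes L^m)$. It therefore suffices to show this map is surjective for all $x$ and a single $m$. This is where ampleness enters uniformly: for $m\geqslant m_1$, again with $m_1$ independent of $x$, the sheaf $F_i\otimes L^m$ is globally generated, so its sections surject onto every fiber $(F_i\otimes L^m)\otimes k_{\pi(x)}$; since $Q_{x,i}$ is annihilated by $\mathfrak{m}_{\pi(x)}$, the surjection $F_i\to Q_{x,i}$ factors through that fiber, and so $H^0(X, F_i\otimes L^m)$ surjects onto $H^0(X, Q_{x,i}\otimes L^m)$. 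Taking $m_0$ to be the maximum of the two bounds over the finitely many $i\in\{0,\dots,N\}$ then completes the argument.

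The main obstacle is the uniformity of $m_0$ across the infinitely many geometric points $x$, and the whole strategy is arranged to dissolve it: after pushing forward to $X$, each $x$ contributes only through the reduced point $\pi(x)$ and its skyscraper $Q_{x,i}$, so the required uniformity is precisely the uniformity already built into Serre vanishing and into global generation of the fixed coherent sheaves $F_i$ on the projective scheme $X$. The one step requiring genuine care is the identification of $\pi_*(\mathcal{J}_x\otimes\Sym^i\cV)$ as $K_{x,i}$ and of $Q_{x,i}$ as a skyscraper with value $(\Sym^i\cV)_x^{G_x}$, which rests on the exactness of $\pi_*$ for the tame stack $\cX$ together with the coarse-space identity $\pi_*\cO_{BG_x}=k_{\pi(x)}$.
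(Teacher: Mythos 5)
Your proof is correct, but it takes a genuinely different route from the paper's at the crucial point. The paper's proof is two sentences: for each fixed $x$ it invokes Lemma \ref{lem:locallyfree} to produce a bound $m_0(x)$, and then asserts that properness of $\mathcal{X}$ allows a uniform choice of $m_0$ --- the uniformity is handled by an implicit compactness/semicontinuity argument (openness in $x$ of the vanishing locus plus quasi-compactness, in the spirit of how Lemma \ref{lem:injectiveonclosedpoints} is proved later). You instead make the uniformity structural: after pushing forward to $X$ via tameness (Lemma \ref{lem:H-bullet}) and the projection formula (Lemma \ref{lem:projection}), the point $x$ enters only through the skyscraper quotient $Q_{x,i}$ of the \emph{fixed} coherent sheaves $F_i = \pi_*\Sym^i\mathcal{V}$, so the long exact sequence reduces the claim to uniform Serre vanishing for $F_i \otimes L^m$ together with global generation of $F_i \otimes L^m$ for large $m$ --- both of which are uniform in $x$ for free. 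What your approach buys is twofold: it removes the need to justify any semicontinuity of $x \mapsto m_0(x)$, and it sidesteps a small looseness in the paper, namely that Lemma \ref{lem:locallyfree} is stated only for locally free sheaves whereas $\mathcal{J}_x \otimes (\det\mathcal{V})^{mN} \otimes \Sym^i\mathcal{V}$ is merely coherent (the paper's lemma does extend to coherent sheaves, but as written the citation does not quite apply, while you apply it only to the locally free $\Sym^i\mathcal{V}$). The identifications you flag as delicate --- exactness of $\pi_*$ for tame stacks, $\pi_*\mathcal{O}_{BG_x} = k_{\pi(x)}$, and the factorization of $F_i \to Q_{x,i}$ through the fiber at $\pi(x)$ because $Q_{x,i}$ is annihilated by $\mathfrak{m}_{\pi(x)}$ --- are all sound in this setting.
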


\begin{proof}
For each geometric point $x$, Lemma \ref{lem:locallyfree} guarantees the existence of a positive integer $m_0(x)$ satisfying 
\[
H^1(\mathcal{X}, \mathcal{J}_x \otimes (\det \mathcal{V}))^{mN} \otimes \Sym^i \mathcal{V}) = 0 \hspace{10mm} 0 \leqslant i \leqslant N \; \text{and} \; m \geqslant m_0(x).
\]
The fact that $\mathcal{X}$ is proper implies that we may choose $m_0$ uniformly. 
\end{proof}

For a closed point $x : \Spec k \to \mathcal{X}$, define the affine scheme
\[
\mathbb{A}_x(m) = \bigoplus_{i=0}^{N} \mathbb{A}_x^i(m)
\]
where $\mathbb{A}_x^i(m)$ is the affine scheme whose $k$-points can be identified with 
\[
\text{Hom}_k([(\det \mathcal{V}_x)^{mN} \otimes \Sym^i\mathcal{V}_x]^{G_x}, \Sym^i(k^r)).
\]
The group scheme $GL_r$ acts on $\mathbb{A}_x^i(m)$ on the \emph{left} by the action described by $(\det \rho)^{mN} \otimes \rho^i$ where $\rho$ is the fundamental representation. In particular, for $i = 0$, we have an identification of $k$-modules
\[
[(\det \mathcal{V}_x)^{mN} \otimes \Sym^0\mathcal{V}_x]^{G_x} \simeq k,
\]
and so the affine scheme $\mathbb{A}_x^0(m)$ is identified with $1$-dimensional affine space $\mathbb{A}^1$. On this piece, the action of $GL_r$ corresponds to multiplication by $(\det \rho)^{mN}$.

\begin{lemma}
There is a positive integer $m_0$ such that for each geometric point $x : \Spec k \to \mathcal{X}$, the natural morphisms 
\[
\mathbb{A}_x(m) \longrightarrow \mathbb{A}(r, \mu(m)) \hspace{10mm} m \geqslant m_0
\]
are injective. 
\end{lemma}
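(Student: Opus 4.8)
The plan is to recognize the natural morphism $\mathbb{A}_x(m) \to \mathbb{A}(r,\mu(m))$ as the one induced, summand by summand, by precomposition with the restriction-to-the-fibre maps, and then to deduce injectivity from \emph{surjectivity} of those restriction maps. Fix $0 \leqslant i \leqslant N$ and write $\mathcal{E} = (\det \mathcal{V})^{mN} \otimes \Sym^i \mathcal{V}$. Since $(\det \mathcal{V})^N \simeq \pi^*L$ gives $(\det\mathcal{V})^{mN}\simeq \pi^*L^m$, we have $V_i(m) = p_*\mathcal{E} = H^0(\mathcal{X}, \mathcal{E})$, while the defining $G_x$-invariants of $\mathbb{A}_x^i(m)$ satisfy $[(\det \mathcal{V}_x)^{mN} \otimes \Sym^i \mathcal{V}_x]^{G_x} = (\mathcal{E}_x)^{G_x} = H^0(BG_x, \mathcal{E}_x)$. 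The Cartesian square relating $BG_x$ to $\mathcal{X}$ produces a restriction map $\operatorname{res}_i \colon H^0(\mathcal{X},\mathcal{E}) \to H^0(BG_x,\mathcal{E}_x)$, that is, $V_i(m) \to [(\det \mathcal{V}_x)^{mN} \otimes \Sym^i \mathcal{V}_x]^{G_x}$, and by construction the natural morphism is, on points, the direct sum over $i$ of the precomposition maps $\phi \mapsto \phi \circ \operatorname{res}_i$.

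The main step is to show each $\operatorname{res}_i$ is surjective for $m$ large, uniformly in $x$. This is exactly what the preceding vanishing lemma delivers: for $m_0$ chosen there, one has $H^1(\mathcal{X}, \mathcal{J}_x \otimes \mathcal{E}) = 0$ for every geometric point $x$ and every $0 \leqslant i \leqslant N$ whenever $m \geqslant m_0$. Feeding this into the portion
\[
H^0(\mathcal{X}, \mathcal{E}) \longrightarrow H^0(BG_x, \mathcal{E}_x) \longrightarrow H^1(\mathcal{X}, \mathcal{J}_x \otimes \mathcal{E})
\]
of the long exact sequence forces $\operatorname{res}_i$ to be surjective. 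Since the $m_0$ of the vanishing lemma was already chosen independently of $x$, the same $m_0$ works here for all geometric points simultaneously.

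Finally I would convert surjectivity of $\operatorname{res}_i$ into injectivity of the morphism of affine schemes. Precomposition with a surjective $k$-linear map is an injective $k$-linear map of $\operatorname{Hom}$-spaces, and this persists functorially in the test scheme $T$ (tensoring $\operatorname{res}_i$ with $\Gamma(T,\mathcal{O}_T)$ preserves surjectivity). Thus on $T$-points the morphism is injective for every $T$, so it is a monomorphism; concretely it is the linear map of vector group schemes associated to an injective linear map of the underlying $k$-vector spaces, hence a closed immersion, and in particular injective, for $m \geqslant m_0$. I expect the only genuine subtlety to be bookkeeping: checking that the ``natural morphism'' is literally this sum of precompositions (so that injectivity of the scheme map corresponds to surjectivity of $\operatorname{res}_i$, running in the opposite direction), and confirming the identifications $H^0(BG_x,\mathcal{E}_x) = (\mathcal{E}_x)^{G_x}$ and $V_i(m) = H^0(\mathcal{X},\mathcal{E})$ so that $\operatorname{res}_i$ is precisely the map occurring in the displayed exact sequence.
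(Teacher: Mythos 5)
Your proposal is correct and follows essentially the same route as the paper: the paper likewise deduces surjectivity of the restriction maps $H^0(\mathcal{X}, (\det\mathcal{V})^{mN}\otimes \Sym^i\mathcal{V}) \to H^0(BG_x, (\det\mathcal{V}_x)^{mN}\otimes \Sym^i\mathcal{V}_x)$ (uniformly in $x$, from the preceding $H^1$-vanishing lemma and the displayed exact sequence) and then identifies $\mathbb{A}_x^i(m)$ with $\Hom_k(H^0(BG_x, (\det\mathcal{V})^{mN}\otimes\Sym^i\mathcal{V}), \Sym^i(k^r))$ so that the morphism is dual precomposition, hence injective. Your write-up merely makes explicit the bookkeeping (the identification $H^0(BG_x,\mathcal{E}_x)=(\mathcal{E}_x)^{G_x}$ and the functoriality on $T$-points) that the paper leaves implicit.
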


\begin{proof}
There is a positive integer $m_0$ such that the natural maps 
\[
H^0(\mathcal{X}, (\det \mathcal{V}))^{mN} \otimes \Sym^i\mathcal{V}) \longrightarrow H^0(BG_x,(\det \mathcal{V}))^{mN} \otimes \Sym^i\mathcal{V})
\]
are surjective for $0 \leqslant i \leqslant N$ and $m \geqslant m_0$. At the level of $k$-points, we have an identification 
\[
\mathbb{A}_x^i(m) = \text{Hom}_k(H^0(BG_x, (\det \mathcal{V})^{mN} \otimes \Sym^i \mathcal{V})), \Sym^i(k^r))
\]
and so the lemma follows. 
\end{proof}

As a result, for each geometric point $x : \Spec k \to \mathcal{X}$, the restriction of the morphism \eqref{eqn:f-m} to the fiber over $x$ fits into a commutative diagram 
\[
\xymatrix{
	\text{Isom}_k(\mathcal{V}_x, k^r)/G_x \ar[r]^{\;\;\;\;\;\;\;\;\;f_{m,x}} \ar[d] & \mathbb{A}_x(m) \ar[d] \\
	Y \ar[r]^{\!\!\!\!\!\!\!\!\!f_m} & \mathbb{A}(r, \mu(m))
}
\] 
where the vertical arrows are closed immersions for $m$ sufficiently large. 

\begin{lemma}\label{lem:fiberimmersion}
There is a positive integer $m_0$ such that for each geometric point $x : \Spec k \to \mathcal{X}$, the morphisms 
\[
f_{m,x} : \textnormal{Isom}_k(\mathcal{V}_x, k^r)/G_x \longrightarrow \mathbb{A}_x(m) \hspace{10mm} m \geqslant m_0
\]
are locally closed immersions. 
\end{lemma}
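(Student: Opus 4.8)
The plan is to exploit the $GL_r$-equivariance of $f_{m,x}$ to recognize it as an orbit map and thereby reduce the statement to a single stabilizer computation. Fix a frame $u_0 : \mathcal{V}_x \xrightarrow{\sim} k^r$ and use it to identify $\mathcal{V}_x = k^r$, so that the image $H := \rho_x(G_x)$ of the faithful representation $\rho_x : G_x \to GL(\mathcal{V}_x) = GL_r$ is a finite subgroup of $GL_r$. Post-composition makes $\textnormal{Isom}_k(\mathcal{V}_x, k^r)$ a $GL_r$-torsor, so $\textnormal{Isom}_k(\mathcal{V}_x, k^r)/G_x$ is a single $GL_r$-orbit, namely the homogeneous space $GL_r/H$. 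Since $f_{m,x}$ is $GL_r$-equivariant by Lemma \ref{lem:frame-i-m} and the source is homogeneous, $f_{m,x}$ is the orbit map $gH \mapsto g \cdot w_0$ associated to $w_0 := f_{m,x}([u_0]) \in \mathbb{A}_x(m)$. Over the algebraically closed field $k$ of characteristic zero, the stabilizer $\textnormal{Stab}_{GL_r}(w_0)$ is smooth, the orbit $GL_r \cdot w_0$ is a smooth locally closed subscheme, and the induced map $GL_r/\textnormal{Stab}_{GL_r}(w_0) \to GL_r \cdot w_0$ is an isomorphism. Hence $f_{m,x}$ is a locally closed immersion exactly when $\textnormal{Stab}_{GL_r}(w_0) = H$, and the whole lemma reduces to this equality. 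The integer $m_0$ is then dictated only by the preceding lemmas, which identify the fiber map with $f_{m,x} : GL_r/H \to \mathbb{A}_x(m)$ and hold uniformly in $x$ by properness of $\mathcal{X}$.

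Next I would identify $w_0$ explicitly. Because $|G_x|$ divides $N$, the character $\det \rho_x$ satisfies $(\det \rho_x)^N = 1$, so $(\det \mathcal{V}_x)^{mN}$ is the trivial $G_x$-representation and the invariants appearing in $\mathbb{A}_x^i(m)$ are canonically $(\Sym^i \mathcal{V}_x)^{G_x}$, tensored with a trivial line. Under the identification via $u_0$, the $i$-th component of $w_0$ is the inclusion $(\Sym^i \mathcal{V}_x)^{G_x} \hookrightarrow \Sym^i(k^r)$, while $g \in GL_r$ acts on $\mathbb{A}_x^i(m)$ through $(\det g)^{mN} \otimes \Sym^i g$. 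Reading the stabilizer condition componentwise, the piece $i = 0$, where $\mathbb{A}_x^0(m) \simeq \mathbb{A}^1$ carries the character $(\det g)^{mN}$, forces $(\det g)^{mN} = 1$; the pieces $1 \leqslant i \leqslant N$ then say precisely that $\Sym^i g$ fixes $(\Sym^i \mathcal{V}_x)^{G_x}$ pointwise. The inclusion $H \subseteq \textnormal{Stab}_{GL_r}(w_0)$ is immediate, since elements of $H$ fix invariants by definition and have $N$-th power of determinant equal to $1$.

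The heart of the argument, and the step I expect to be the main obstacle, is the reverse inclusion $\textnormal{Stab}_{GL_r}(w_0) \subseteq H$. Let $g$ be a stabilizer. By Noether's degree bound, valid in characteristic zero, the invariant ring $(\Sym \mathcal{V}_x)^{G_x}$ is generated by invariants of degree at most $|G_x| \leqslant N$; since $g$ fixes $(\Sym^i \mathcal{V}_x)^{G_x}$ pointwise for every $i \leqslant N$ and $\Sym g$ is a graded algebra automorphism, $g$ acts as the identity on the entire invariant ring. Consequently $q \circ g = q$ for the quotient map $q : \mathbb{V}(\mathcal{V}_x) \to \Spec\big((\Sym \mathcal{V}_x)^{G_x}\big)$, so $g \cdot \xi$ and $\xi$ lie in the same fiber of $q$, hence, for the finite group $G_x$, in the same $G_x$-orbit, for every point $\xi$ of $\mathbb{V}(\mathcal{V}_x)$. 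Writing $\mathbb{V}(\mathcal{V}_x)$ as the finite union $\bigcup_{\gamma \in G_x} \{\xi : g \cdot \xi = \rho_x(\gamma) \cdot \xi\}$ of linear subspaces, and using that a vector space over the infinite field $k$ is never a finite union of proper subspaces, I conclude that the linear automorphisms $g$ and $\rho_x(\gamma)$ agree for a single $\gamma$, that is, $g = \rho_x(\gamma) \in H$. This yields $\textnormal{Stab}_{GL_r}(w_0) = H$ and completes the proof. The delicate points to verify carefully are the triviality of the determinant twist as a $G_x$-representation, the use of Noether's bound to pass from the degree-$\leqslant N$ generators to the whole invariant ring, and the passage from ``$g$ preserves every orbit'' to ``$g$ is a single group element'' via the subspace-covering argument.
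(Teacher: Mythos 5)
Your proposal is correct, but it takes a genuinely different route from the paper. The paper's proof is coordinate-based: it chooses Noether generators $t_1, \ldots, t_R$ of the invariant ring $(\Sym^\bullet \mathcal{V}_x)^{G_x}$ in degrees $d_\ell \leqslant |G_x|$ (which divides $N$), presents the source as $\Spec k[t_{\ell j}, 1/D]$ with $t_{\ell j} = t_\ell(p_{1j}, \ldots, p_{rj})$ and $D$ an invariant power of $\det(p_{ij})$, and then exhibits each coordinate $t_{\ell j}$ in the image of $f_{m,x}^\sharp$ on the open locus where the distinguished coordinate $z^0$ is invertible, so that the pullback on functions is surjective there. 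You instead treat $f_{m,x}$ as an orbit map: the source is the homogeneous space $GL_r/H$, so over an algebraically closed field of characteristic zero the lemma reduces to the single equality $\textnormal{Stab}_{GL_r}(w_0) = H$, which you establish by combining Noether's degree bound (fixing invariants in degrees $\leqslant N$ fixes the whole invariant ring), the fact that fibers of the quotient by the finite group $G_x$ are exactly $G_x$-orbits, and the covering trick that a vector space over an infinite field is not a finite union of proper subspaces; all of these steps are sound, including the triviality of the $G_x$-action on $(\det \mathcal{V}_x)^{mN}$ and the componentwise reading of the stabilizer condition. The common kernel of both proofs is Noether's bound $|G_x| \leqslant N$ --- this is exactly where the hypothesis on $N$ enters in each. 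Your route buys two things: first, it proves the fiber statement for \emph{every} $m \geqslant 1$, so no $m_0$ is actually needed at this level (your closing remark about $m_0$ could simply say this); second, it entirely sidesteps the paper's identification $k[p_{ij}]^{G_x} = k[t_{\ell j}]$, i.e., the assertion that the invariants of $r$ copies of $\mathcal{V}_x^\vee$ are generated by single-column invariants --- a point requiring care, since invariants of several copies are in general obtained only after polarization --- whereas your stabilizer computation never needs any generation statement for the several-copies invariant ring. What the paper's approach buys in exchange is explicit affine coordinates and equations for the image, at the cost of this more delicate commutative-algebra input. One cosmetic remark: you prove the equivalence ``immersion if and only if $\textnormal{Stab}(w_0) = H$,'' but only the ``if'' direction is needed.
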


\begin{proof}
The affine scheme $\mathcal{V}_x^\vee$ can be identified with $\Spec k[p_1, \ldots, p_r]$. In this way, we have an identification of $k$-algebras 
\[
\Sym^\bullet \mathcal{V}_x \simeq k[p_1, \ldots, p_r].
\] 
The action of $G_x$ on $\mathcal{V}_x$ corresponds to one on $\Spec k[p_1, \ldots, p_r]$. The invariant subring is finitely generated, so there are a finite number of polynomials 
\[
t_\ell = t_\ell(p_1, \ldots, p_r) \hspace{15mm} 1 \leqslant \ell \leqslant R 
\]
such that 
\[
k[p_1, \ldots, p_r]^{G_x} \simeq k[t_1, \ldots, t_R]. 
\]
Moreover, we may arrange that each $t_\ell$ is homogeneous of degree $d_\ell$ and the degrees are arranged in ascending order $d_1 \leqslant \cdots \leqslant d_R$ with the highest degree being at most the order of $G_x$ (see \cite{noether1915endlichkeitssatz}), which is a number that divides $N$. 

As a result, each $t_\ell$ can be considered as an element of $(\Sym^{d_\ell}\mathcal{V}_x)^{G_x}$. Moreover, for each $d$, a basis for $(\Sym^{d}\mathcal{V}_x)^{G_x}$ may be identified with a subset of monomials of the form 
\[
t^{\alpha} := t_1^{\alpha_1} \cdots t_R^{\alpha_R}
\]
with $\alpha_\ell \geqslant 0$ satisfying 
\begin{align}\label{eqn:alpha}
d_1\alpha_1 + \cdots + d_R \alpha_R = d.
\end{align}
(The basis may not consist of the full set of monomials satisfying \eqref{eqn:alpha} because there could be relations among the $t_\ell$s.) In addition, we may ensure that the basis for $(\Sym^{d_\ell}\mathcal{V}_x)^{G_x}$ includes the monomial $t_\ell$.  

The action of $G_x$ on $\text{Hom}_k(\mathcal{V}_x, k^r)$ is compatible with the decomposition 
\[
\text{Hom}_k(\mathcal{V}_x, k^r) \simeq \bigoplus_{j=1}^r \mathcal{V}_x^\vee
\]
of $k$-modules. 
For each $j = 1, \ldots, r$, identify the $j$th summand with 
\[
\Spec k[p_{1j}, \ldots, p_{rj}]
\]
so that the $p_{ij}$ describe global coordinates on the affine scheme $\text{Hom}_k(\mathcal{V}_x, k^r)$. For each $j$, the group $G_x$ acts on the affine scheme $\Spec k[p_{1j}, \ldots, p_{rj}]$. If we set 
\[
t_{\ell j} = t_\ell(p_{1j}, \ldots, p_{rj}),
\]
then we have an identification 
\[
k[p_{ij}]^{G_x} = k[t_{\ell j}].
\]

The subscheme $\text{Isom}_k(\mathcal{V}_x, k^r)$ can be identified with the affine scheme 
\[
\Spec k[p_{ij}, 1/\det(p_{ij})].
\] 
There is a smallest positive integer $a$ such that $(\det p_{ij})^{a}$ is invariant under $G_x$. Moreover, the number $a$ divides $N$. There is a polynomial $D = D(t_{\ell j})$ such that $D = (\det p_{ij})^{a}$.  We have an identification of affine schemes 
\[
\text{Isom}_k(\mathcal{V}_x, k^r)/G_x \simeq \Spec k[t_{\ell j}, 1/D]. 
\]
There is a positive integer $b$ such that $ab = N$ so that $D^{bm} = (\det p_{ij})^{mN}$. Notice that $D^{bm}$ can be identified with an element of the $k$-module $(\det \mathcal{V}_x)^{mN}$.

Let $e_1, \ldots, e_r$ denote a \emph{dual} basis for $k^r$. A dual basis for $\Sym^d(k^r)$ can then be described by 
\[
e^\beta := e_1^{\beta_1} \cdots e_r^{\beta_r}
\]
where $\beta_j \geqslant 0$ satisfy 
\begin{align}\label{eqn:beta}
\beta_1 + \cdots + \beta_r = d.
\end{align}

Consider the dual space of $\mathbb{A}_x(m)$. This space can be identified with the $k$-module
\[
\bigoplus_{d=0}^N (\det \mathcal{V}_x)^{mN} \otimes \text{Hom}_k(\Sym^d(k^r), (\Sym^d \mathcal{V}_x)^{G_x}).
\]
With our notation, the $d$th piece admits a basis described by 
\[
D^{bm} \otimes e^{\beta} \otimes t^\alpha
\]
where \eqref{eqn:alpha} and \eqref{eqn:beta} hold. In particular, the zeroth piece is one-dimensional spanned by the element 
\[
z^0:= D^{bm} \otimes e^{0} \otimes t^{0}.
\]
Let $U_0$ denote the open subset of $\mathbb{A}_x(m)$ where $z^0$ does not vanish. According to the proof of Theorem \ref{theorem:embed}, the morphism 
\[
f_{m,x} : \text{Isom}_k(\mathcal{V}_x,k^r)/G_x \longrightarrow \mathbb{A}_x(m)
\] 
factors through $U_0$. 

If we restrict our attention to this subscheme, where $z^0$ is invertible, then we may obtain each $t_{\ell j}$ as a linear combination of 
\[
f^*_{m,x}\left(\frac{D^{bm} \otimes e^{\beta(j, \ell)} \otimes t^{\alpha(\ell)}}{z^0}\right)
\]
where 
\[
\beta(j, \ell) = (0, \ldots,0, \overbrace{d_\ell}^{j}, 0, \ldots, 0)
\]
and 
\[ 
\alpha(\ell) = (0, \ldots, 0 ,\overbrace{1}^{\ell}, 0, \ldots, 0).
\]
The result follows. 
\end{proof}

\begin{lemma}\label{lem:injective1}
Given distinct closed points $y,y' : \Spec k \to Y$ of $Y$, there is a positive integer $m_0(y,y')$ such that for each $m \geqslant m_0(y,y')$ the morphism 
\[
f_m : Y \longrightarrow \mathbb{A}(r, \mu(m))
\]
satisfies $f_m(y) \ne f_m(y')$. In addition, there are open neighborhoods $U$ and $U'$ of $q(y)$ and $q(y')$ respectively in $\mathcal{X}$, such that for each pair of distinct points $z \in q^{-1}(U)$ and $z' \in q^{-1}(U')$ and for each $m \geqslant m_0(y,y')$, we have $f_m(z) \neq f_m(z')$. 
\end{lemma}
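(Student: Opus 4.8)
The plan is to separate the two points $y, y'$ using global sections over $\mathcal{X}$, distinguishing two cases according to whether the images $q(y), q(y')$ in $\mathcal{X}$ are equal or distinct. First I would observe that it suffices to find, for $m$ large, a section in one of the summands $V_i(m)$ whose induced coordinate function on $\mathbb{A}(r,\mu(m))$ takes different values at $f_m(y)$ and $f_m(y')$. Concretely, $f_m$ is assembled from the evaluation maps for $H^0(\mathcal{X}, (\det \mathcal{V})^{mN} \otimes \Sym^i \mathcal{V})$, so the coordinates of $f_m(y)$ record the values at $q(y)$ of global sections, read through the trivialization of $\mathcal{V}$ that the point $y \in Y = \Fr(\mathcal{V})$ furnishes.

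\emph{Case 1: $x := q(y) \ne x' := q(y')$.} Here I would use the two-point version of Serre vanishing. Let $\mathcal{J}_{x,x'}$ be the ideal sheaf cutting out the reduced closed substack $BG_x \sqcup BG_{x'}$. As in the preceding lemmas, Lemma \ref{lem:locallyfree} applied to $\mathcal{J}_{x,x'} \otimes \Sym^i \mathcal{V}$ (after descending to the coarse space and twisting by $L^m$) gives an $m_0$ with $H^1(\mathcal{X}, \mathcal{J}_{x,x'} \otimes (\det \mathcal{V})^{mN} \otimes \Sym^i \mathcal{V}) = 0$ for $m \ge m_0$, so the restriction map
\[
H^0(\mathcal{X}, (\det \mathcal{V})^{mN}) \longrightarrow H^0(BG_x, (\det \mathcal{V})^{mN}_x) \oplus H^0(BG_{x'}, (\det \mathcal{V})^{mN}_{x'})
\]
is surjective already in the $i=0$ summand. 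Thus there is a section $s$ of $(\det \mathcal{V})^{mN}$ that is nonzero at $x$ but vanishes at $x'$; reading $s$ against the frames $y, y'$ produces distinct values in the $z^0$-coordinate, giving $f_m(y) \ne f_m(y')$.

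\emph{Case 2: $x = x' = q(y)$ but $y \ne y'$.} Then $y, y'$ are distinct points of the single fiber $\mathrm{Isom}_k(\mathcal{V}_x, k^r)/G_x$ over $x$, and this case is exactly the injectivity of the fiber map $f_{m,x}$ established in Lemma \ref{lem:fiberimmersion}: a locally closed immersion is in particular injective on $k$-points, so $f_{m,x}(y) \ne f_{m,x}(y')$ and hence $f_m(y) \ne f_m(y')$. The uniform $m_0$ from that lemma applies.

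The main obstacle is the final sentence, which upgrades pointwise separation to separation over neighborhoods. For the uniformity claim I would take $m_0(y,y')$ to dominate the bounds from both cases. The values $f_m(z), f_m(z')$ vary continuously (indeed algebraically) in $z, z'$, and the condition $f_m(z) \ne f_m(z')$ defines an open subset of $q^{-1}(U) \times q^{-1}(U')$ for suitable $U, U'$; the section $s$ separating $x, x'$ in Case 1 remains nonvanishing on a neighborhood of $x$ and vanishing on a neighborhood of $x'$ by semicontinuity, and descends along $\pi$ to open sets $U, U' \subset \mathcal{X}$. I expect the genuinely delicate point to be handling Case 2 with a single $m_0$ valid as $y, y'$ range over a whole neighborhood: since the fiber immersion $f_{m,x}$ separates points of an individual fiber, I would argue that this persists for nearby fibers using properness of $\mathcal{X}$ to obtain a uniform $m_0$, then shrink $U, U'$ so that the relevant separating coordinates remain effective. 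The rest reduces to the openness of the locus where two algebraic sections differ.
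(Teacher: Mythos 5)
Your main separation argument follows the paper's proof essentially step for step: the paper likewise splits into the case where $y,y'$ lie in the same $GL_r$-orbit, handled by the uniform locally-closed-immersion statement of Lemma \ref{lem:fiberimmersion}, and the case of distinct images $x \neq x'$, where the vanishing of $H^1(\mathcal{X}, \mathcal{J}_{x,x'} \otimes (\det \mathcal{V})^{mN} \otimes \Sym^i \mathcal{V})$ for $m \geqslant m_0(y,y')$ yields surjectivity of the two-point restriction map and hence a section $s$ with $s(x) \neq 0$ and $s(x') = 0$. Your restriction to the $i = 0$ summand is a legitimate mild simplification: since $(\det \mathcal{V})^N \simeq \pi^* L$, the stabilizer $G_x$ acts trivially on $(\det \mathcal{V}_x)^{mN}$, so $H^0(BG_x, (\det \mathcal{V})^{mN}_x) \simeq k \neq 0$ automatically, whereas the paper instead invokes $\pi$-ampleness to find some $i$ with $(\Sym^i \mathcal{V}_x)^{G_x} \neq 0$.

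The gap is in your treatment of the neighborhood statement. The claim that the separating section $s$ ``remains nonvanishing on a neighborhood of $x$ and vanishing on a neighborhood of $x'$ by semicontinuity'' is false in its second half: nonvanishing is an open condition, but vanishing is closed, so $s(x') = 0$ gives no control over $s$ at nearby points $x_2 \neq x'$, and a single section cannot separate all pairs $(z,z') \in q^{-1}(U) \times q^{-1}(U')$. The paper avoids this trap by never reusing $s$: it observes that the integer $m_0(y,y')$ produced by the $H^1$-vanishing argument depends only on the pair of images $(x,x')$ in $\mathcal{X}$, so that for each nearby pair of distinct images one re-derives a fresh separating section at the same level $m$, and it concludes via continuity of $f_m$. (The paper is admittedly terse here; what is really being used is that the relevant $H^1$-vanishing, hence surjectivity of the two-point restriction, persists at a fixed $m$ for all pairs in a neighborhood of $(x,x')$ away from the diagonal.) Separately, the point you flag as ``genuinely delicate'' in Case 2 is not actually an issue: the $m_0$ of Lemma \ref{lem:fiberimmersion} is already uniform over \emph{all} geometric points of $\mathcal{X}$, so same-fiber pairs $z \neq z'$ with $q(z) = q(z')$ are separated for every $m \geqslant m_0$ wherever they lie, with no shrinking of $U$ and $U'$ and no appeal to properness required for that case.
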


\begin{proof}
If $y,y'$ lie in the same $GL_r$-orbit, then they map to the same point of $\mathcal{X}$, and we may choose $m_0(y,y')$ from Lemma \ref{lem:fiberimmersion}. Suppose $y,y'$ do not lie in the same orbit. Then they determine distinct points $x,x'$ of $\mathcal{X}$. Let $\mathcal{J}_{x,x'}$ denote the ideal sheaf of $\mathcal{O}_{\mathcal{X}}$ corresponding to their union. For each $i = 0, \ldots, N$, there is a positive integer $m_0(y,y', i)$ such that 
\[
H^1(\mathcal{X}, \mathcal{J}_{y,y'} \otimes \mathcal{L}^{mN} \otimes \Sym^i\mathcal{V}) = 0 \hspace{10mm} m \geqslant m_0(y,y',i).
\]
Let $m_0(y,y')$ denote the maximum of these integers. Then the map 
\[
H^0(\mathcal{X}, (\det \mathcal{V}))^{mN} \otimes \Sym^i\mathcal{V}) \longrightarrow (\Sym^i \mathcal{V}_x)^{G_x} \oplus (\Sym^i \mathcal{V}_{x'})^{G_{x'}}
\]
is surjective for each $i = 0, \ldots, N$. Because $\mathcal{V}$ is $\pi$-ample, there is an $i$ such that 
\[
(\Sym^i \mathcal{V}_x)^{G_x} \ne 0.
\]
It follows that there is a section $s$ of $\mathcal{L}^{mN} \otimes \Sym^i\mathcal{V}$ such that $s(x) \ne 0$ but $s(x') = 0$. This means that $f_m(y) \ne f_m(y')$ for each $m \geqslant m_0(y,y')$. 

In fact, notice that the choice of $m(y,y')$ depends only on the images $x,x'$ of $y,y'$ in $\mathcal{X}$. The other statement of the lemma then follows because $f_m$ is continuous.  
\end{proof}

\begin{lemma}\label{lem:injectiveonclosedpoints}
For sufficiently large $m$, the morphism 
\[
f_m : Y \longrightarrow \mathbb{A}(r, \mu(m))
\] 
is injective on closed points.  
\end{lemma}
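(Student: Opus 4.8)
The plan is to promote the pair-by-pair separation of Lemma~\ref{lem:injective1} to a single value of $m$ that works for all pairs simultaneously, by a quasi-compactness argument carried out on the product $\mathcal{X}\times_k\mathcal{X}$. This product is quasi-compact because $p\colon\mathcal{X}\to\Spec k$ is proper (Situation~\ref{sit:proper}); equivalently one could descend to the projective coarse moduli $X\times_k X$ via $\pi\times\pi$, but it is cleaner to stay on $\mathcal{X}\times_k\mathcal{X}$.

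First I would build an open cover of $\mathcal{X}\times_k\mathcal{X}$ out of the ``rectangles'' furnished by the moreover clause of Lemma~\ref{lem:injective1}. Given an ordered pair of closed points $(x,x')$, I choose distinct closed points $y,y'\in Y$ with $q(y)=x$ and $q(y')=x'$; this is possible even when $x=x'$, since the fibre $q^{-1}(x)\simeq\text{Isom}_k(\mathcal{V}_x,k^r)/G_x$ has dimension $r^2$ and hence infinitely many closed points. Lemma~\ref{lem:injective1} then yields open neighbourhoods $U\ni x$ and $U'\ni x'$ in $\mathcal{X}$ together with an integer $m_0(x,x')$ such that any distinct $z\in q^{-1}(U)$ and $z'\in q^{-1}(U')$ satisfy $f_m(z)\neq f_m(z')$ for all $m\geqslant m_0(x,x')$. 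The opens $U\times U'$ so obtained contain every closed point of $\mathcal{X}\times_k\mathcal{X}$, and since this stack is Jacobson of finite type over $k$, they cover it.

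By quasi-compactness I then extract a finite subcover $U_a\times U'_a$, $1\leqslant a\leqslant A$, with associated integers $m_0^{(a)}$, and set $m^\ast=\max_a m_0^{(a)}$. For $m\geqslant m^\ast$ the morphism $f_m$ is injective on closed points: given distinct closed points $y,y'\in Y$, the pair $(q(y),q(y'))$ lies in some $U_a\times U'_a$, so $y\in q^{-1}(U_a)$ and $y'\in q^{-1}(U'_a)$ are distinct, and the defining property of that rectangle gives $f_m(y)\neq f_m(y')$. Note that the same-fibre case is already absorbed into the diagonal rectangles, so no separate appeal to Lemma~\ref{lem:fiberimmersion} is needed here.

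The delicate point, and the reason for routing everything through the moreover clause, is the behaviour near the diagonal. Covering only the open locus of distinct pairs $(\mathcal{X}\times_k\mathcal{X})\setminus\Delta$ would not suffice, because that locus is not quasi-compact and the bounds $m_0(y,y')$ could in principle deteriorate as $y$ and $y'$ collapse into a common fibre. What rescues the argument is that the same-orbit case of Lemma~\ref{lem:injective1}, which in turn rests on the fibre immersion of Lemma~\ref{lem:fiberimmersion}, produces genuine rectangular neighbourhoods about the diagonal points $(x,x)$ as well; thus the entire product---proper, hence quasi-compact---is covered and a single uniform $m^\ast$ can be extracted. I expect this diagonal bookkeeping to be the only real subtlety, the remaining steps being formal.
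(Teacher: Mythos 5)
Your proposal is correct and takes essentially the same route as the paper: both promote the ``moreover'' clause of Lemma~\ref{lem:injective1} to a cover of the quasi-compact product $\mathcal{X}\times_k\mathcal{X}$ by rectangular neighbourhoods (diagonal points included, via the same-fibre case resting on Lemma~\ref{lem:fiberimmersion}), extract a finite subcover, and take the maximum of the finitely many bounds, using that separation persists for all larger $m$. Your explicit treatment of the diagonal and of why the rectangles cover all closed points is simply a careful spelling-out of what the paper compresses into ``the properness of $\mathcal{X}$ allows us to cover $\mathcal{X}\times\mathcal{X}$ by a finite number of open neighborhoods.''
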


\begin{proof}
Using Lemma \ref{lem:injective1}, the properness of $\mathcal{X}$ allows us to cover $\mathcal{X} \times \mathcal{X}$ by a finite number of open neighborhoods $U_1, \ldots, U_\ell$ together with a finite number of positive integers $m_{0,1}, \ldots, m_{0,\ell}$ such that the corresponding morphisms $f_1, \ldots, f_\ell$ determined by these integers are injective on $U_1, \ldots, U_\ell$ respectively. Note that if the morphism $f$ determined by $m$ separates $x$ and $y$, then any morphism determined by $m' \geqslant m$ will also separate $x$ and $y$. As a result, we may take $m_0$ to be the maximum of $m_{0,1}, \ldots, m_{0,\ell}$. 
\end{proof}

\begin{lemma}
Given a closed point $y : \Spec k \to Y$ of $Y$, there is a positive integer $m_0(y)$ such that for each $m \geqslant m_0(y)$, the morphism of stalks
\[
f_{m,y}^\sharp : \mathcal{O}_{\mathbb{A}(r, \mu(m)), f_m(y)} \longrightarrow f_{m,*} \mathcal{O}_{Y,y}
\]
is surjective. In addition, there is a neighborhood $U$ of $q(y)$ in $\mathcal{X}$ such that for each $z \in q^{-1}(U)$ and each $m \geqslant m_0(y)$, the morphism $f_{m,z}^\sharp$ is surjective. 
\end{lemma}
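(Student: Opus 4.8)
The plan is to prove that the image $C := \operatorname{im}(f_{m,y}^\sharp)$ is all of the target by producing enough regular functions in $C$ to generate the local ring in both the fibre and the base directions, and then upgrading this generation to honest surjectivity by a Nakayama argument fuelled by the properness of $f_m$.

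First I would secure the finiteness that makes Nakayama available. Since $\mathcal{X}$ is proper over $\Spec k$ (Situation \ref{sit:embed}) and the quotient stack $\mathcal{A}(r,\mu(m)) = [\mathbb{A}(r,\mu(m))/GL_r]$ has affine, hence separated, diagonal, the stack morphism $F_m$ of \eqref{eqn:F_m} is proper. Base-changing along $\mathbb{A}(r,\mu(m)) \to \mathcal{A}(r,\mu(m))$ in the Cartesian square shows that $f_m : Y \to \mathbb{A}(r,\mu(m))$ is a proper morphism of Noetherian schemes, so $f_{m*}\mathcal{O}_Y$ is coherent and its stalk $(f_{m*}\mathcal{O}_Y)_{f_m(y)}$ is a finite module over $A := \mathcal{O}_{\mathbb{A}(r,\mu(m)),f_m(y)}$. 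Taking $m_0(y)$ at least the uniform threshold of Lemma \ref{lem:injectiveonclosedpoints}, the morphism $f_m$, being proper and injective on points, is finite with $f_m^{-1}(f_m(y)) = \{y\}$, so this stalk is simply $B := \mathcal{O}_{Y,y}$, a finite $A$-module.

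Next I would exhibit the generators. Write $x = q(y)$ with coarse image $\bar{x} \in X$, and work over the open locus where the distinguished $i=0$ coordinate $s_0 \in H^0(X, L^m)$, through which $f_m$ factors (proof of Theorem \ref{theorem:embed}), does not vanish; the regular functions $f_m^*(z_\alpha / s_0)$ then lie in $C$. The families $i \geqslant 1$ and $i = 0$ play complementary roles. For $i \geqslant 1$, Lemma \ref{lem:fiberimmersion} says exactly that these pullbacks recover the invariant frame coordinates $t_{\ell j}$, which generate the local ring of the scheme-theoretic fibre $F = q^{-1}(\bar{x}) \cong \text{Isom}_k(\mathcal{V}_x, k^r)/G_x$; thus $C + \mathfrak{m}_{X,\bar{x}} B = B$. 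For $i = 0$, the identification $V_0(m) \cong H^0(X, L^m)$ (from $(\det\mathcal{V})^N \simeq \pi^*L$ and Lemma \ref{lem:projection}) shows that the functions $s/s_0$ are the affine coordinates of the Kodaira map of $X$; since $L$ is $\bar{p}$-ample, $L^m$ is very ample for $m \gg 0$, so these functions generate $\mathcal{O}_{X,\bar{x}}$. In particular one obtains $h_1, \dots, h_s \in C \cap \mathfrak{m}_B$ generating $\mathfrak{m}_{X,\bar{x}}$, so the ideal $\mathfrak{a} := (h_1, \dots, h_s) \subseteq C$ satisfies $\mathfrak{a} B = \mathfrak{m}_{X,\bar{x}} B$.

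Combining, we get $B = C + \mathfrak{m}_{X,\bar{x}} B = C + \mathfrak{a} B$ with $\mathfrak{a} \subseteq C \cap \mathfrak{m}_B$. Because $B$ is finite over $A$ and $A$ acts through its quotient $C$, the cokernel $M := B/C$ is a finitely generated $C$-module, and $B = C + \mathfrak{a} B$ gives $M = \mathfrak{a} M$; as $C$ is a Noetherian local ring whose maximal ideal contains $\mathfrak{a}$, Nakayama forces $M = 0$, i.e. $f_{m,y}^\sharp$ is surjective. The neighborhood statement is then formal: surjectivity of the map of coherent sheaves $f_m^\sharp$ is an open condition on $\mathbb{A}(r,\mu(m))$, so it holds over a neighborhood of $f_m(y)$, whose preimage in $Y$ is open and $GL_r$-invariant and hence descends to a neighborhood $U$ of $q(y)$ in $\mathcal{X}$. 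I expect the crux to be precisely this final upgrade: the fibrewise immersion and the Kodaira generation by themselves only control $B$ modulo the extended ideal $\mathfrak{m}_{X,\bar{x}}B$ and yield density in the $\mathfrak{m}_B$-adic topology, so without the finiteness of $B$ over $A$ — which is why the properness of $f_m$ and the resulting coherence of $f_{m*}\mathcal{O}_Y$ are doing the real work — one cannot conclude honest surjectivity; a secondary technical point is the identification of the scheme-theoretic fibre $q^{-1}(\bar{x})$ with $\text{Isom}_k(\mathcal{V}_x, k^r)/G_x$ needed to apply Lemma \ref{lem:fiberimmersion} to the honest local ring $\mathcal{O}_{F,y}$.
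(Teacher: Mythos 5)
Your argument has a fatal step at the very beginning: $f_m$ is \emph{not} proper (let alone finite), and the finiteness of $B$ over $A$ that fuels your Nakayama argument is unavailable. An affine diagonal is not a proper diagonal: the stack $\mathcal{A}(r,\mu(m)) = [\mathbb{A}(r,\mu(m))/GL_r]$ is not separated, since for instance the origin of $\mathbb{A}(r,\mu(m))$ has stabilizer all of $GL_r$, which is not finite; so properness of $\cX$ over $\Spec k$ does not transfer to $F_m$ or to its base change $f_m$. Concretely, take $\cX = \mathbb{P}^n$, $\cV = \mathcal{O}(1)$, $N=1$: then $Y = \Fr(\cV) \simeq \mathbb{A}^{n+1}\setminus\{0\}$ and $f_m$ is an evaluation (Veronese-type) map whose image is the punctured affine cone over a projective embedding of $\mathbb{P}^n$ --- locally closed but not closed. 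A proper (or finite) morphism has closed image, so $f_m$ cannot be proper; indeed this is forced by the structure of the result: $f_m$ factors through the open semistable locus (Theorem \ref{theorem:embed}), and the whole point is that it is only a locally closed embedding. Consequently the stalk $B$ of $f_{m*}\mathcal{O}_Y$ is not a finite $A$-module, $M = B/C$ need not be finitely generated over $C$, and Nakayama does not apply. As you yourself observe, what remains of your argument --- $B = C + \mathfrak{a}B$ with $\mathfrak{a} \subseteq C \cap \mathfrak{m}_B$ --- iterates only to $B = C + \mathfrak{a}^n B$ for all $n$, i.e.\ $\mathfrak{m}_B$-adic density of $C$, which is strictly weaker than surjectivity. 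A secondary gap: your identity $C + \mathfrak{m}_{X,\bar{x}}B = B$ does not follow from Lemma \ref{lem:fiberimmersion}, because that lemma controls the fibre over the residual gerbe $BG_x$, namely $\mathrm{Isom}_k(\cV_x,k^r)/G_x$, whereas the scheme-theoretic fibre of $Y$ over $\bar{x} \in X$ is in general a nilpotent thickening of it ($\pi^{-1}(\bar{x})$ carries nilpotents at stacky points); so the fibre-direction coordinates generate $B$ only modulo a possibly larger ideal than $\mathfrak{m}_{X,\bar{x}}B$.

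The paper's proof avoids any global finiteness and works entirely at the level of cotangent spaces. Twisting the conormal sequence of $BG_x \subset \cX$ by $\mathcal{E} = (\det\cV)^{mN}\otimes\Sym^i\cV$ gives the exact piece
\[
H^0(\cX, \mathcal{J}_x \otimes \mathcal{E}) \longrightarrow (\mathfrak{m}_y/\mathfrak{m}_y^2 \otimes \mathcal{E}_x)^{G_x} \longrightarrow H^1(\cX, \mathcal{J}_x^2 \otimes \mathcal{E}),
\]
and Lemma \ref{lem:locallyfree} kills the $H^1$ for all $m \geqslant m_0(y)$ uniformly over $0 \leqslant i \leqslant N$. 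Linear reductivity of $G_x$ decomposes $\mathfrak{m}_y/\mathfrak{m}_y^2$ into irreducibles $W_\ell$, and $\pi$-ampleness (via the Burnside--Steinberg input behind Lemma \ref{lem:generating}) furnishes for each $\ell$ an $i_\ell$ with $(W_\ell \otimes (\det\cV_x)^{mN} \otimes \Sym^{i_\ell}\cV_x)^{G_x} \neq 0$, so global sections hit \emph{every} cotangent direction; this yields surjectivity of $\mathfrak{m}_a/\mathfrak{m}_a^2 \to \mathfrak{m}_y/\mathfrak{m}_y^2$ with $a = f_m(y)$, which is then upgraded to surjectivity on stalks by the cited local-algebra lemma, and the neighbourhood statement follows from openness of the condition. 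Note that working with $\mathcal{J}_x$, the ideal of $BG_x$, also sidesteps your thickened-fibre issue entirely. If you want to salvage your two-directions strategy, you would at minimum have to replace properness of $f_m$ by an argument confined to a setting where finiteness actually holds, which is not something the ambient affine space provides.
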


\begin{proof}
Let $\mathfrak{m}_y$ denote the maximal ideal in $\mathcal{O}_{Y,y}$. Let $x$ denote the image of $y$ in $\mathcal{X}$, and let $\mathcal{J}_x$ denote the corresponding ideal sheaf of $\mathcal{O}_{\mathcal{X}}$. For each locally free sheaf $\mathcal{E}$ over $\mathcal{X}$, there is an exact sequence in cohomology, a portion of which is 
\[
H^0(\mathcal{X}, \mathcal{J}_x \otimes \mathcal{E}) \longrightarrow (\mathfrak{m}_y/\mathfrak{m}_y^2 \otimes \mathcal{E}_x)^{G_x} \longrightarrow H^1(\mathcal{X}, \mathcal{J}_x^2 \otimes \mathcal{E}).
\]
Using $\mathcal{E} = (\det \mathcal{V})^{mN} \otimes \Sym^i\mathcal{V}$, Lemma \ref{lem:locallyfree} gives, for each $i$, a positive integer $m_0(y,i)$ such that  
\[
H^1(\mathcal{X}, \mathcal{J}_x^2 \otimes (\det \mathcal{V})^{mN} \otimes \Sym^i\mathcal{V}) = 0 \hspace{10mm} m \geqslant m_0(y,i). 
\]
Let $m_0(y)$ denote the maximum of the $m_0(y,i)$. Because $G_x$ is linearly reductive, we may decompose $\mathfrak{m}_y/\mathfrak{m}_y^2$ into irreducible representations 
\[
\mathfrak{m}_y/\mathfrak{m}_y^2 = \bigoplus_{\ell} W_\ell.
\] 
Using that $\mathcal{V}$ is $\pi$-ample, for each $\ell$, there is a positive integer $i_\ell$ such that 
\[
(W_\ell \otimes (\det \mathcal{V}_x)^{mN} \otimes \Sym^{i_\ell}\mathcal{V}_x)^{G_x} \ne 0. 
\]
As a result, upon writing $a = f_m(y)$, the morphism of $k$-modules 
\[
\mathfrak{m}_a/\mathfrak{m}_a^2 \longrightarrow \mathfrak{m}_y/\mathfrak{m}_y^2
\]
is surjective. It follows that the morphism of local rings 
\[
\mathcal{O}_{\mathbb{A}(r, \mu(m)), a} \longrightarrow f_{m, *}\mathcal{O}_{Y,y}
\]
is surjective too (see e.g. \cite[Lemma 0E8M]{stacksproject}).  

The other statement of the lemma follows from the fact that $f_{m,y}^*$ being surjective represents an open condition on $\mathcal{X}$.  
\end{proof}

\begin{lemma}
For sufficiently large $m$, the morphism 
\[
f_m^\sharp : \mathcal{O}_{\mathbb{A}(r, \mu(m))} \longrightarrow f_{m,*} \mathcal{O}_{Y}
\]
is surjective. 
\end{lemma}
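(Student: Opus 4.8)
The plan is to upgrade the pointwise surjectivity statements of the previous lemma into a single uniform statement using the properness of $\mathcal{X}$, exactly as Lemma \ref{lem:injectiveonclosedpoints} upgraded Lemma \ref{lem:injective1}. First I would recall what the previous lemma provides: for each closed point $y : \Spec k \to Y$ there is an integer $m_0(y)$ and an open neighborhood $U_y$ of $q(y)$ in $\mathcal{X}$ such that for all $z \in q^{-1}(U_y)$ and all $m \geqslant m_0(y)$, the stalk map $f_{m,z}^\sharp$ is surjective. The key observation is that surjectivity of the comorphism on stalks is stable under increasing $m$: if $f_{m,z}^\sharp$ is surjective for some $m$, then it remains surjective for every $m' \geqslant m$, because the image already contains enough functions to generate the maximal ideal of the target modulo its square, and enlarging $m$ only adds more sections of $(\det\mathcal{V})^{m'N}\otimes\Sym^i\mathcal{V}$.

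The main step is the covering argument. The open sets $\{U_y\}_{y}$, as $y$ ranges over closed points of $Y$, cover $\mathcal{X}$ (indeed every closed point $x$ of $\mathcal{X}$ lies in $U_y$ for any $y$ in the fiber over $x$). Since $\mathcal{X}$ is proper over $\Spec k$, hence quasi-compact, I can extract a finite subcover $U_{y_1}, \ldots, U_{y_\ell}$. Taking $m_0 = \max\{m_0(y_1), \ldots, m_0(y_\ell)\}$ and using the stability-under-increasing-$m$ remark, I obtain that for every $m \geqslant m_0$ and every closed point $z$ of $Y$, the stalk map $f_{m,z}^\sharp$ is surjective, since $z$ lies in $q^{-1}(U_{y_j})$ for some $j$ and $m \geqslant m_0 \geqslant m_0(y_j)$.

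Finally I would pass from surjectivity on all stalks to surjectivity of the sheaf morphism $f_m^\sharp : \mathcal{O}_{\mathbb{A}(r,\mu(m))} \to f_{m,*}\mathcal{O}_Y$. Surjectivity of a morphism of sheaves may be checked on stalks, and it suffices to verify it at closed points of the target scheme $\mathbb{A}(r,\mu(m))$ since that scheme is of finite type over the algebraically closed field $k$ and hence its closed points are dense (a morphism of coherent, or more generally quasi-coherent, sheaves on a Noetherian scheme that is surjective at all closed points is surjective). The relevant stalks of $f_{m,*}\mathcal{O}_Y$ at a closed point $a = f_m(z)$ in the image are controlled by the stalk maps $f_{m,z}^\sharp$ already shown surjective, while away from the image the stalk of $f_{m,*}\mathcal{O}_Y$ vanishes and surjectivity is automatic.

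The step I expect to require the most care is the last one, namely the precise bookkeeping needed to deduce surjectivity of the comorphism of sheaves from the surjectivity of all the individual stalk maps $f_{m,z}^\sharp$ computed in the previous lemma. The subtlety is that $f_{m,z}^\sharp$ was phrased as a map into $f_{m,*}\mathcal{O}_{Y,y}$ rather than into the genuine stalk of the structure sheaf of $Y$, so I must be careful that surjectivity at the level of these pushforward stalks, for all points of the image simultaneously together with the vanishing of the stalk off the image, really does assemble into surjectivity of the global sheaf map; invoking \cite[Lemma 0E8M]{stacksproject} as in the previous lemma, combined with the finiteness of the morphism $f_m$ restricted to its image, should close this gap.
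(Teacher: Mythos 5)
Your proposal matches the paper's proof essentially exactly: the paper likewise uses properness of $\mathcal{X}$ to extract a finite cover by the neighborhoods furnished by the preceding lemma, observes that surjectivity of $f_{m,y}^\sharp$ persists when $m$ is increased, and takes $m_0$ to be the maximum of the finitely many thresholds. Your additional final paragraph, assembling the stalkwise surjectivity at closed points into surjectivity of the sheaf map on the affine scheme $\mathbb{A}(r,\mu(m))$, is a correct elaboration of a step the paper leaves implicit, and your flagging of the abuse in writing $f_{m,*}\mathcal{O}_{Y,y}$ for the relevant stalk is apt.
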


\begin{proof}
The properness of $\mathcal{X}$ allows us to cover $\mathcal{X}$ by a finite number of open neighborhoods $U_1, \ldots, U_\ell$ together with a finite number of positive integers $m_{0,1}, \ldots, m_{0,\ell}$ such that the corresponding morphisms $f_1, \ldots, f_\ell$ satisfy the property that $f_{m_j,y}^\sharp$ is surjective for each $y \in U_j$. Note that if the morphism $f_{m,y}^\sharp$ is surjective, then $f_{m',y}^\sharp$ is surjective for each $m' \geqslant m$. As a result, we may take $m_0$ to be the maximum of $m_{0,1}, \ldots, m_{0,\ell}$. 
\end{proof}

\bibliographystyle{siam} 
\bibliography{amplesheavesbib}

\end{document}